\newcommand{\rsdraw}[3]{\raisebox{-#1\height}{\scalebox{#2}{\includegraphics{#3.eps}}}}
\newtheorem{thm}{Theorem}
\newtheorem{lem}[thm]{Lemma}
\newtheorem{cor}[thm]{Corollary}
\newtheorem{rmk}[thm]{Remark}
\newcommand{\co}{\colon}
\newcommand{\tens}{\otimes}
\newcommand{\id}{\mathrm{id}}
\newcommand{\fs}[1]{[#1]}
\newcommand{\mc}{\mathcal}
\newcommand{\opp}{\mathrm{op}}
\newcommand{\N}{\mathbb{N}}
\newcommand{\C}{\mathbb{C}}
\newcommand{\Hom}{\mathrm{Hom}}
\newcommand{\Ob}[1]{\mathrm{Ob}(#1)}
\newcommand{\Comod}{\textbf{Comod}}
\newcommand{\Mod}{\textbf{Mod}}
\newcommand{\kk}{\Bbbk}
\newcommand{\uu}{\Bbb1}
\newcommand{\evl}{\text{ev}}
\newcommand{\evr}{\widetilde{\text{ev}}}
\newcommand{\coevl}{\text{coev}}
\newcommand{\coevr}{\widetilde{\text{coev}}}
\patchcmd{\@setaddresses}{\indent}{\noindent}{}{}
\patchcmd{\@setaddresses}{\indent}{\noindent}{}{}
\patchcmd{\@setaddresses}{\indent}{\noindent}{}{}
\patchcmd{\@setaddresses}{\indent}{\noindent}{}{}
 \title{On the braided Connes-Moscovici construction}
 \author{Ivan Bartulovi\'{c}}
\address{Universit\'{e} de Lille, Laboratoire Paul Painlev\'{e}
UMR CNRS 8524, F-59000 Lille, France}
\email{ivan.bartulovic@univ-lille.fr}
\subjclass[2020]{16T05, 18M15}
\keywords{Hopf algebras, braided monoidal categories, traces}
\begin{document}

\begin{abstract} In 1998, Connes and Moscovici defined the cyclic cohomology of Hopf algebras. In 2010, Khalkhali and Pourkia proposed a braided generalization: to any Hopf algebra $H$ in a braided category $\mc B$, they associate a paracocyclic object in $\mc B$. In this paper we explicitly compute the powers of the paracocyclic operator of this paracocyclic object. Also, we introduce twisted modular pairs in involution for $H$ and derive (co)cyclic modules from them.
Finally, we relate the paracocyclic object associated with $H$ to that associated with an $H$-module coalgebra via a categorical version of the Connes-Moscovici trace.
\end{abstract}
\maketitle
\setcounter{tocdepth}{1}
 \tableofcontents
\section{Introduction}
Cyclic (co)homology of algebras was introduced in the 1980s by Connes~\cite{Connesext, connes_non-commutative_1985} and Tsygan~\cite{tsygan1983homology} independently.
To any algebra is associated a cocyclic vector space (that is, a cocyclic object in the category of vector spaces) whose cohomology is called the cyclic cohomology of the algebra.
The notion of a (co)cyclic object in a category, introduced by Connes~\cite{Connesext}, is a generalization of the notion of a (co)simplicial object in that category.

Cyclic cohomology has been considered in various versions and generalizations.
In particular, in~\cite{connes_cyclic_1999}, Connes and Moscovici defined the Hopf cyclic cohomology by associating a cocyclic vector space to a Hopf algebra~$H$ over~$\C$ endowed with a modular pair in involution (that is, a pair~$(\delta,\sigma)$ where~$\delta \co H\to \C$ is a character and~$\sigma\in H$ is a grouplike element verifying the modular pair condition~$\delta(\sigma)=1$ and a certain involutivity condition).
Also, in~\cite{connes_hopf_1998}, they relate  the Hopf cyclic cohomology of~$H$ to the cyclic cohomology of an~$H$-module algebra by means of a trace map.

Braided monoidal categories were defined by Street and Joyal in the 1980s and appeared in many areas of mathematics such as low-dimensional topology and representation theo\-ry.
Several generalizations of cyclic (co)homology were introduced in the braided setting.
In this paper, we focus on the braided generalization of the Connes-Moscovici construction due to Khalkhali and Pourkia~\cite{khalkhalipourkia}.
Let~$H$ be a Hopf algebra in a braided monoidal category~$\mc B=(\mc B, \tens, \uu)$. A modular pair for~$H$ is a pair~$(\delta,\sigma)$, where~$\delta\co H\to \uu$ is an algebra morphism and~$\sigma \co \uu\to H$ is a coalgebra morphism such that~$\delta\sigma=\id_\uu$. In~\cite{khalkhalipourkia}, Khalkhali and Pourkia associate to any modular pair~$(\delta,\sigma)$ for~$H$ a paracocyclic object~$\textbf{CM}_\bullet(H,\delta,\sigma)=\{\textbf{CM}_n(H,\delta,\sigma)\}_{n\in \N}$ in~$\mc B$.
This object is cocyclic if it satisfies the cocyclicity condition:  for all~$n\in\N$, $$(\tau_{n}(\delta,\sigma))^{n+1}=\id_{H^{\tens n}},$$ where~$\tau_\bullet(\delta,\sigma)=\{\tau_n(\delta,\sigma)\}_{n\in \N}$ is the paracocyclic operator of $\textbf{CM}_\bullet(H,\delta,\sigma)$.
As already noticed in~\cite{connes_cyclic_1999} (corresponding to the case~$\mc B =\text{Vect}_{\mathbb{C}}$), verifying the cocyclicity condition (if true) is a rather technical task.
Khalkhali and Pourkia proved~\cite[Theorem 7.3]{khalkhalipourkia} that if~$(\delta,\sigma)$ is a so called braided modular pair in involution, then~$(\tau_2(\delta,\sigma))^3$ is equal to the square of the braiding of~$H$ with itself.
In particular, if~$\mc B$ is symmetric, then~$(\tau_2(\delta,\sigma))^3=\id_{H^{\tens 2}}$.
They also state a similar claim about~$(\tau_n(\delta,\sigma))^{n+1}$, which implies the cocyclicity condition when~$\mc B$ is symmetric, see \cite[Remark 7.4]{khalkhalipourkia}.

Our first main result is a complete com\-pu\-ta\-tion (by means of the Penrose graphical calculus) of the powers (up to~$n+1$) of the pa\-ra\-co\-cyclic o\-pe\-ra\-tor~$\tau_n(\delta,\sigma)$ associated with a mo\-du\-lar pair~$(\delta,\sigma)$ for $H$, see Theorem~\ref{powers}.
Next, assume that~$\mc B$ has a twist~$\theta$. We introduce the notion of a~$\theta$\textit{-twisted mo\-du\-lar pair in involution for $H$} (see Section \ref{modpair}) and prove (see Corollary \ref{n+1thpower=twist}) that if $(\delta,\sigma)$ is such a pair, then the associated paracocyclic operator satisfies the following twisted cocyclicity condition: for all $n\in \N$,
$$(\tau_n(\delta,\sigma))^{n+1}=\theta_{H^{\tens n}}.$$
When~$\mc B$ is further~$\kk$-linear, we derive (co)cyclic~$\kk$-modules from a~$\theta$-twisted modular pair~$(\delta,\sigma)$ by composing~$\textbf{CM}_\bullet(H,\delta,\sigma)$ with the functors~$\Hom_{\mc B}(\uu,-)$ and~$\Hom_{\mc B}(-,\uu)$, see Section~\ref{recall}.
Note that if~$\mc B$ is symmetric, then a braided modular pair in involution in the sense of~\cite{khalkhalipourkia} is a~$\id_{\mc B}$-twisted modular pair in involution, where~$\id_{\mc B}$ is the trivial twist of~$\mc B$, and so the associated paracocyclic operator satisfies the cocyclicity condition.

Let $H$ be a Hopf algebra in braided category $\mc B$ with a twist $\theta$. Our second main result is the construction of traces à la Connes-Moscovici.
More precisely, let $C$ be a $H$-module coalgebra, that is, a coalgebra in the category of right $H$-modules in $\mc B$.
Inspired by a construction of Akrami and Majid~\cite{cycliccocycles}, we associate to $C$ a paracocyclic object~$\textbf{C}_\bullet(C)$ in $\mc B$.
We introduce the notion of a $\delta$-\textit{invariant} $\sigma$-\textit{trace} for $C$ and derive from each such trace a natural transformation from $\textbf{CM}_\bullet(H,\delta,\sigma)$ to~$\textbf{C}_\bullet(C)$, see Theorem \ref{CMtrace}. This generalizes the standard Connes-Moscovici trace.
We provide examples of traces in the case where~$\mc B$ is a ribbon category and $H$ is its coend (see Section \ref{tracesdrinfeld}).

The paper is organized as follows.
In Section~\ref{prelimiHopf}, we review braided monoidal categories, Hopf algebras, and graphical calculus. Section~\ref{simplicialprelimi} is devoted to preliminaries on simplicial, paracyclic, and cyclic objects in a category.
In Sections~\ref{res1} and~\ref{res2}, we state our main results and their corollaries. Sections~\ref{proofmain} and~\ref{pfCMtrace} are devoted to the proofs of Theorems~\ref{powers} and~\ref{CMtrace}.
In~\nameref{appendix}, we provide an alternative proof (by using the Penrose graphical calculus) of the fact that the object~$\textbf{CM}_\bullet(H,\delta, \sigma)$ defined in \cite{khalkhalipourkia} is paracocyclic.

Throughout the paper,~$\kk$ denotes any commutative ring.
The class of objects of a category~$\mc B$ is denoted by~$\Ob{\mc B}$.

\subsection*{Acknowledgments} This work was supported by the Labex CEMPI (ANR-11-LABX-0007-01), by the Région Hauts-de-France, and by the FNS-ANR grant OChoTop (ANR-18-CE93-0002-01). The author is particularly thankful for the useful advices of his PhD advisor Alexis Virelizier.

\section{Preliminaries on monoidal categories and braided Hopf algebras} \label{prelimiHopf}
In this section, we recall some algebraic preliminaries used in the paper. We first re\-call some facts about braided monoidal categories and the Penrose gra\-phi\-cal cal\-cu\-lus. Next, we recall de\-fi\-ni\-tions of ca\-te\-go\-ri\-cal Hopf al\-ge\-bras and re\-la\-ted con\-cepts. We finish with a recall on pivotal categories and coends.  For a more com\-pre\-hen\-sive in\-tro\-duc\-tion, see~\cite{moncatstft}.

\subsection{Conventions} In what follows, we suppress in our formulas the associativity and unitality constraints of the monoidal category.
This does not lead to any ambiguity since Mac Lane's coherence theorem (see~\cite{maclane1963natural}) implies that all possible ways of inserting these constraints give the same results. We will denote by $\tens$ and $\uu$ the monoidal product and unit object of a monoidal category. For any objects~$X_1, \dots, X_n$ of a monoidal category with~$n\ge 2$, we set
$$X_1\tens X_2\tens \cdots \tens X_n=(\cdots((X_1\tens X_2)\tens X_3)\tens \cdots\tens X_{n-1})\tens X_n$$ and similarly for morphisms.
A~monoidal ca\-te\-go\-ry is $\kk$-linear if its Hom sets have a struc\-ture of a~$\kk$-mo\-dule such that the com\-po\-si\-tion and monoidal product of morphisms are~$\kk$-bi\-li\-near.

\subsection{Braided categories}
A \textit{braiding} of a monoidal category~$(\mc B, \tens, \uu)$ is a family $\tau=\{\tau_{X,Y}\co X\tens Y \to Y\tens X\}_{X,Y\in \Ob{\mc B}}$ of natural isomorphisms such that
\begin{align}
\tau_{X,Y\tens Z}&= (\id_Y \tens \tau_{X,Z})(\tau_{X,Y} \tens \id_Z) \text{ and }  \label{br1} \\
\tau_{X\tens Y, Z}&= (\tau_{X,Z} \tens \id_Y)(\id_X \tens \tau_{Y,Z}) \label{br2}
\end{align}
for all~$X,Y,Z \in \Ob{\mc B}$.
A \textit{braided category} is a monoidal category endowed with a braiding.

A braiding $\tau$ of $\mc B$ is \textit{symmetric} if for all $X,Y \in \Ob{\mc B}$,
\[\tau_{Y,X}\tau_{X,Y}=\id_{X\tens Y}.\]
A \textit{symmetric category} is a  category endowed with a symmetric braiding.

\subsection{Twists for braided categories} \label{catswithtwist} A \emph{twist} for a braided monoidal category $\mc B$ is a natural isomorphism $\theta=\{\theta_X \co X \to X\}_{X \in \Ob{\mc B}}$ such that
\begin{equation} \label{twistcondition}\theta_{X\tens Y}= (\theta_X \tens \theta_Y)\tau_{Y,X}\tau_{X,Y} \end{equation}
holds for all $X,Y \in \Ob{\mc B}$.
Note that this implies that $\theta_{\uu}=\id_{\uu}$.  For example, any ribbon category (see Section \ref{ribbon}) has a canonical twist.

Note that when~$\mc B$ is symmetric, a twist for~$\mc B$ is nothing but a monoidal natural isomorphism of the identity functor~$\mc B \to \mc B$. In particular,~$\id_{\mc B} = \{\id_X\co X\to X\}_{X \in \Ob{\mc B}}$ is a twist for~$\mc B$.

By a \emph{braided category with a twist}, we mean a braided category endowed with a twist.

\subsection{Graphical calculus} \label{cg}
Throughout this paper, we will use the \emph{Penrose graphical calculus}. For a systematic treatment, one may consult~\cite{moncatstft}.
The diagrams are to be read from bottom to top.
In a monoidal category~$\mc B$, the diagrams are made of arcs colored by objects of~$\mc B$ and of boxes, colored by morphisms of~$\mc B$.
Arcs colored by~$\uu$ may be omitted in the pictures.
The identity morphism of an object~$X$, a morphism~$f\co X\to Y$ in~$\mc B$, and its composition with a morphism~$g\co Y\to Z$ in~$\mc B$ are represented respectively as
\[
\id_X=\,
\psfrag{X}{$X$}
\rsdraw{0.45}{0.75}{idXunori}
\;, \quad
f=\,
\psfrag{X}{$X$}
\psfrag{Y}{$Y$}
\psfrag{f}{\hspace{-0.1cm}$f$}
\rsdraw{0.45}{0.75}{fXYunori}
\;,\quad \text{and} \quad
gf=\,
\psfrag{X}{$X$}
\psfrag{Y}{$Y$}
\psfrag{Z}{\hspace{-0.05cm}$Z$}
\psfrag{f}{\hspace{-0.1cm}$f$}
\psfrag{g}{\hspace{-0.03cm}$g$}
\rsdraw{0.45}{0.75}{morcompounori}
\;.\]
The tensor product of two morphisms~$f\co X\to Y$ and~$g\co U \to V$ is represented by placing a picture of~$f$ to the left of the picture of~$g$:
\[
f\tens g
=
\,
\psfrag{X}{$X$}
\psfrag{Y}{$Y$}
\psfrag{U}{$U$}
\psfrag{V}{$V$}
\psfrag{f}{\hspace{-0.1cm}$f$}
\psfrag{g}{$g$}
\rsdraw{0.45}{0.75}{tensfg1unori}
\;.
\]
Any diagram represents a morphism. For example, the diagram
$$\,
\psfrag{X}{\hspace{-0.1cm}$X$}
\psfrag{Y}{$Y$}
\psfrag{T}{$T$}
\psfrag{U}{$U$}
\psfrag{c}{$c$}
\psfrag{f}{\hspace{-0.1cm}$f$}
\psfrag{h}{\hspace{-0.05cm}$h$}
\psfrag{V}{$V$}
\psfrag{g}{$g$}
\psfrag{Z}{\hspace{-0.14cm}$Z$}
\rsdraw{0.45}{0.75}{cgexemple}
\;$$
represents $(g\tens \id_V)(f\tens\id_T\tens h)(\id_X\tens c) \co X \to Z\tens V$.
The morphism associated to a diagram depends only on the isotopy class of the diagram representing it.
For example, the following \emph{level-exchange property}:
\[
\, \psfrag{X}{$X$}
\psfrag{Y}{$Y$}
\psfrag{U}{$U$}
\psfrag{V}{$V$}
\psfrag{f}{$f$}
\psfrag{g}{$g$}
\rsdraw{0.45}{0.75}{levelexchange1}\;= \, \psfrag{X}{$X$}
\psfrag{Y}{$Y$}
\psfrag{U}{$U$}
\psfrag{V}{$V$}
\psfrag{f}{$f$}
\psfrag{g}{$g$}
\rsdraw{0.45}{0.75}{levelexchange3}\;=\, \psfrag{X}{$X$}
\psfrag{Y}{$Y$}
\psfrag{U}{$U$}
\psfrag{V}{$V$}
\psfrag{f}{$f$}
\psfrag{g}{$g$}
\rsdraw{0.45}{0.75}{levelexchange2}\;,\]
reflects the formula \[f\tens g =(f\tens \id_V)(\id_X \tens g)=(\id_Y \tens g)(f\tens \id_U).\]
When $\mc B$ is braided with braiding $\tau$, we depict
\[
\tau_{X,Y}=
\,
\psfrag{X}{$X$}
\psfrag{Y}{$Y$}
\rsdraw{0.45}{0.75}{brcgCMunori}
\; \quad \text{and} \quad
\tau_{X,Y}^{-1}=\,
\psfrag{X}{$X$}
\psfrag{Y}{$Y$}
\rsdraw{0.45}{0.75}{invbrcgCMunori}
\;.
\]
Axioms~\eqref{br1} and~\eqref{br2} for $\tau$ say that for all $X,Y,Z \in \Ob{\mc B}$,
$$
\,
\psfrag{X}{$X$}
\psfrag{Y}{\hspace{-0.5cm}$Y\tens Z$}
\rsdraw{0.45}{0.75}{brcgaxiom}
\; = \,
\psfrag{X}{$X$}
\psfrag{Y}{$Y$}
\psfrag{Z}{$Z$}
\rsdraw{0.45}{0.75}{brcgaxiom1}
\; \quad \text{and} \quad \,
\psfrag{X}{\hspace{-0.5cm}$X\tens Y$}
\psfrag{Y}{$Z$}
\rsdraw{0.45}{0.75}{brcgaxiom}
\; = \,
\psfrag{X}{$X$}
\psfrag{Y}{$Y$}
\psfrag{Z}{$Z$}
\rsdraw{0.45}{0.75}{brcgaxiom2}
\;.
$$
Naturality of the braiding and the level-exchange property imply that for any two morphisms~$f \co X\to Y$ and~$g\co U \to V$ in~$\mc B$,
$$\, \psfrag{X}{$X$}
\psfrag{Y}{$Y$}
\psfrag{U}{$U$}
\psfrag{V}{$V$}
\psfrag{f}{$f$}
\psfrag{g}{$g$}
\rsdraw{0.45}{0.75}{brnatcg}\; = \, \psfrag{X}{$X$}
\psfrag{Y}{$Y$}
\psfrag{U}{$U$}
\psfrag{V}{$V$}
\psfrag{f}{$g$}
\psfrag{g}{$f$}
\rsdraw{0.45}{0.75}{brnatcg1}\; = \, \psfrag{X}{$X$}
\psfrag{Y}{$Y$}
\psfrag{U}{$U$}
\psfrag{V}{$V$}
\psfrag{f}{$f$}
\psfrag{g}{$g$}
\rsdraw{0.45}{0.75}{brnatcg2}\; = \, \psfrag{X}{$X$}
\psfrag{Y}{$Y$}
\psfrag{U}{$U$}
\psfrag{V}{$V$}
\psfrag{f}{$f$}
\psfrag{g}{$g$}
\rsdraw{0.45}{0.75}{brnatcg3}\;.$$
When $\mc B$ is braided with a twist $\theta=\{\theta_X\co X\to X\}_{X\in \Ob{\mc B}}$, we denote the twist by
$$\theta_X = \, \psfrag{X}{$X$}
\rsdraw{0.45}{0.75}{twistX}\;.$$
Axiom \eqref{twistcondition} for $\theta$ gives that for any $X,Y \in \Ob{\mc B}$,
$$
\, \psfrag{X}{$X\tens Y$}
\rsdraw{0.45}{0.75}{twistcondcg0}\; \quad \hspace{0.15cm}= \hspace{0.2cm} \, \psfrag{X}{$X$}
\psfrag{U}{$Y$}
\rsdraw{0.45}{0.75}{twistcondcg}\;.
$$

\subsection{Categorical algebras}
An~\textit{algebra} in a monoidal category~$\mc B$ is a triple~$(A,m,u)$, where~$A$ is an object of~$\mc B$,~$m\co A\tens A \to A$ and~$u \co \uu \to A$ are morphisms in~$\mc B$, called~\textit{multiplication} and~\textit{unit} respectively, which satisfy the associativity and unitality axioms:
\[
m(m\tens \id_A)=m(\id_A \tens m) \quad \text{and} \quad m(u \tens \id_A)= \id_A= m(\id_A\tens u).
\]
The multiplication and unit are depicted by
\[m=
\rsdraw{0.45}{0.75}{multiplicationCMnolab}
 \quad \text{and} \quad
u=
\rsdraw{0.45}{0.75}{unitnolab},\]
so that the associativity and unitality axioms rewrite graphically as
\[
\rsdraw{0.45}{0.75}{assocgraphic1nolab}
=
\rsdraw{0.45}{0.75}{assocgraphic2nolab}
 \quad \text{and} \quad
\rsdraw{0.45}{0.75}{unitalitygraphic1nolab}=
\rsdraw{0.45}{0.75}{unitalitygraphic2iponolab}=
\rsdraw{0.45}{0.75}{unitalitygraphic2nolab}. \]
Here, it is understood that the arcs are colored by the underlying object of the algebra.

An \textit{algebra morphism} between algebras~$(A,m,u)$ and~$(A',m',u')$ in a monoidal category~$\mc B$ is a
morphism~$f\co A \to A'$ in~$\mc B$ such that~$fm = m'(f\tens f)$ and~$fu = u'$. The latter conditions are depicted by
$$
\,
\psfrag{f}{\hspace{-0.1cm}$f$}
\rsdraw{0.45}{0.75}{algmor}\;=\,
\psfrag{f}{\hspace{-0.1cm}$f$}
\rsdraw{0.45}{0.75}{algmor2}\; \quad \text{and} \quad \,
\psfrag{f}{\hspace{-0.1cm}$f$}
\rsdraw{0.45}{0.75}{algmor3}\;=\,
\psfrag{f}{\hspace{-0.1cm}$f$}
\rsdraw{0.45}{0.75}{algmor4}\;.$$

\subsection{Categorical coalgebras}A~\textit{coalgebra} in a mo\-noi\-dal ca\-te\-go\-ry~$\mc B$ is gi\-ven by a triple $(C, \Delta, \varepsilon)$, where~$C$ is an object of~$\mc B$,~$\Delta \co C\to C\tens C$ and~$\varepsilon\co C \to \uu$ are morphisms in~$\mc B$, called~\textit{comultiplication} and~\textit{counit} respectively, which satisfy the coassociativity and counitality axioms:
\[
(\Delta\tens \id_C)\Delta= (\id_C \tens \Delta)\Delta \quad \text{and} \quad  (\id_C \tens \varepsilon)\Delta=\id_C= (\varepsilon \tens \id_C)\Delta.\]
The comultiplication and counit are depicted by
\[
\Delta=\,
\psfrag{C}{$C$}
\rsdraw{0.45}{0.75}{comultiplicationCMnolab}
\; \quad \text{and} \quad
\varepsilon=\,
\psfrag{C}{$C$}
\rsdraw{0.45}{0.75}{counitCMnolab}
\;,\]
so that the coassociativity and counitality axioms rewrite graphically as
\[
\rsdraw{0.45}{0.75}{coassocgraphic1nolab}=
\rsdraw{0.45}{0.75}{coassocgraphic2nolab} \quad \text{and} \quad
\rsdraw{0.45}{0.75}{counitalitygraphic1nolab}=
\rsdraw{0.45}{0.75}{unitalitygraphic2iponolab}=
\rsdraw{0.45}{0.75}{counitalitygraphic2nolab}. \]

A \textit{coalgebra morphism} between coalgebras~$(C,\Delta,\varepsilon)$ and~$(C',\Delta',\varepsilon')$ in a monoidal category~$\mc B$ is a
morphism~$f\co C \to C'$ in~$\mc B$ such that~$\Delta' f = (f\tens f)\Delta$ and~$\varepsilon f = \varepsilon'$.
\subsection{Graphical calculus and iterated (co)multiplications} Let~$(A,m,u)$ and~$(C,\Delta, \varepsilon)$ be an algebra and a coalgebra in a monoidal category $\mc B$.
For any~$n\in \N$, we define the~\textit{$n$-th multiplication}~$m_n \co  A^{\tens n} \to A$ and the \textit{$n$-th comultiplication} $\Delta_n \co C\to C^{\tens n}$ inductively by:
$$m_{0}= u, \quad m_{n+1}=m(\id_A\tens m_{n}), \quad  \Delta_0= \varepsilon, \quad \text{and} \quad \Delta_{n+1}=(\id_C\tens \Delta_{n})\Delta.$$
For $n\ge 1$, we depict them as
$$m_n=\underbrace{\,
\psfrag{A}{$A$}
\rsdraw{0.45}{0.75}{multinotnolab}\;
}_{n \text{  times}} \quad \text{and} \quad \Delta_n=\overbrace{\,
\psfrag{A}{$C$}
\rsdraw{0.45}{0.75}{multinot2nolab}\;}^{n \text{  times}}.$$
The (co)associativity and (co)unitality of $m$ and $\Delta$ imply that
$$m_{k+1}(m_{n_0} \tens \cdots \tens m_{n_k})=m_{n_0+\cdots+ n_k} \quad \text{and} \quad (\Delta_{n_0} \tens \cdots \tens \Delta_{n_k})\Delta_{k+1}=\Delta_{n_0+\cdots+ n_k}$$
for all $k\in \N$ and $n_0,\dots,n_k \in \N$.
For example,
$$m_4=\,
\psfrag{A}{$A$}
\rsdraw{0.45}{0.75}{multinotex0nolab}\;=\,
\psfrag{A}{$A$}
\rsdraw{0.45}{0.75}{multinotex1nolab}\;=\,
\psfrag{A}{$A$}
\rsdraw{0.45}{0.75}{multinotex2nolab}\; \quad \text{and} \quad \Delta_3= \,
\psfrag{C}{$C$}
\rsdraw{0.45}{0.75}{multinotex5nolab}
\;=
\,\psfrag{C}{$C$}
\rsdraw{0.45}{0.75}{multinotex3nolab}\;
$$

\subsection{Categorical bialgebras}
Let~$\mc B$ be a braided monoidal category.
A \textit{bialgebra} in~$\mc B$ is a quintuple~$(A,m,u, \Delta, \varepsilon)$ such that~$(A,m,u)$ is an algebra in~$\mc B$,~$(A,\Delta, \varepsilon)$ is a coalgebra in~$\mc B$, and the following compatibility relations hold:
\[\Delta m= (m\tens m)(\id_A\tens \tau_{A,A} \tens \id_A)(\Delta\tens \Delta), \quad \Delta u = u\tens u, \quad \varepsilon m = \varepsilon\tens \varepsilon, \quad \text{and} \quad  \varepsilon u= \id_{\uu}.\]
Graphically, these rewrite as
\[
\rsdraw{0.45}{0.75}{bialgebraaxiomnolab} =
\rsdraw{0.45}{0.75}{bialgebraaxiom1nolab}, \qquad
\rsdraw{0.45}{0.75}{unitiscoalgmornolab} =
\rsdraw{0.45}{0.75}{unitiscoalgmor1nolab}, \qquad
\rsdraw{0.45}{0.75}{epsisalgmornolab} =
\rsdraw{0.45}{0.75}{epsisalgmor1nolab},
\qquad \text{and} \qquad
\rsdraw{0.45}{0.75}{epsilonunolab}=\emptyset.
\]

A \textit{bialgebra morphism} between two bialgebras~$A$ et~$A'$ is a morphism~$A \to A'$ in $\mc B$ which is both an algebra and a coalgebra morphism.

\subsection{Categorical Hopf algebras}\label{hopfcat}
A~\textit{Hopf algebra} in~$\mc B$ is a sextuple~$(H,m,u, \Delta, \varepsilon, S)$, where~$(H,m,u, \Delta, \varepsilon)$ is a bialgebra in~$\mc B$ and~$S\co H \to H$ is an isomorphism in~$\mc B$, called the \textit{antipode},  which satisfies
\[m(S\tens \id_H)\Delta=u \epsilon = m(\id_H \tens S)\Delta.\]
The antipode and its inverse are depicted by
\[S=
\rsdraw{0.45}{0.75}{antipodeCMnolab}
 \quad \text{and} \quad S^{-1}=
\rsdraw{0.45}{0.75}{antipodeinvCMnolab}.
 \]
Graphically, the antipode axiom is rewritten as
\[
\rsdraw{0.45}{0.75}{antipodeaxiomnolab}=
\rsdraw{0.45}{0.75}{antipodeaxiom2nolab}=
\rsdraw{0.45}{0.75}{antipodeaxiom1nolab}.\]
A useful feature of antipodes is that it is \textit{anti-multiplicative}:
\[
\rsdraw{0.45}{0.75}{antimult1nolab}=
\rsdraw{0.45}{0.75}{antimult2nolab} \quad \text{and} \quad
\rsdraw{0.45}{0.75}{epsSeps1nolab}=
\rsdraw{0.45}{0.75}{epsSeps2nolab},\]
and \textit{anti-comultiplicative}:
\[\,
\psfrag{b}{$H$}
\psfrag{v}{$\tilde{S}$}
\rsdraw{0.55}{0.75}{anticomult1nolab}\;=\,
\psfrag{b}{$H$}
\psfrag{v}{$\tilde{S}$}
\rsdraw{0.55}{0.75}{anticomult2nolab}\; \quad \text{and} \quad \,
\psfrag{b}{$H$}
\rsdraw{0.45}{0.75}{uSu1nolab}\;=\,
\psfrag{b}{$H$}
\rsdraw{0.45}{0.75}{uSu2nolab}\;.\]

A \textit{Hopf algebra morphism} between two Hopf algebras is a bialgebra morphism between them.

\subsection{Categorical modules} Let~$(A,m,u)$ an algebra in a monoidal category~$\mc B$.
A \emph{left~$A$-module} in $\mc B$ is a pair~$(M,r)$, where~$r \co A\tens M \to M$ is a morphism in~$\mc B$, called~\emph{the action of~$A$ on~$M$}, which satisfies
$$r(m\tens \id_M)=r(\id_A \tens r) \quad \text{and} \quad r(u\tens \id_M)=\id_M.$$
Graphically, the action $r\co A\tens M \to M$ is denoted by
$$r=\,
\psfrag{u}{\hspace{-0.2cm}$A$}
\psfrag{A}{$M$}
\rsdraw{0.45}{0.75}{action}\;,$$
so that the axioms of a left $A$-module rewrite as
$$\,
\psfrag{u}{\hspace{-0.2cm}$A$}
\psfrag{A}{\hspace{-0.15cm}$M$}
\rsdraw{0.45}{0.75}{action2nolab}\;=\,
\psfrag{u}{\hspace{-0.2cm}$A$}
\psfrag{A}{\hspace{-0.15cm}$M$}
\rsdraw{0.45}{0.75}{action1nolab}\; \quad \text{and} \quad \psfrag{u}{\hspace{-0.05cm}$A$}
\psfrag{A}{\hspace{-0.15cm}$M$}
\rsdraw{0.45}{0.75}{action3nolab}\;=\,
\psfrag{u}{\hspace{-0.2cm}$A$}
\psfrag{A}{\hspace{-0.15cm}$M$}
\rsdraw{0.45}{0.75}{action4nolab}\;. $$

A morphism~$f\co (M,r) \to(M',r')$ between two left~$A$-modules~$(M,r)$ and~$(M',r')$ is a morphism~$f\co M \to M'$ in~$\mc B$ such that~$fr=r'(\id_A\tens f)$.
With composition inherited from~$\mc B$, left~$A$-modules and morphisms between them form a category~$_{A}{\Mod}$.

When $\mc B$ is braided and~$A$ is a bialgebra in $\mc B$, the category~$_{A}{\Mod}$ is monoidal: the unit object of~$_{A}{\Mod}$ is the pair~$(\uu,\varepsilon)$, the monoidal product of two left~$A$-modules~$(M,r)$ and~$(M',r')$ is given by the pair~$(M\tens M', s)$, where $$s= (r\tens r')(\id_A\tens \tau_{A,M} \tens \id_{M'})(\Delta \tens \id_{M\tens M'})=\,
\psfrag{u}{\hspace{-0.2cm}$A$}
\psfrag{B}{\hspace{-0.2cm}$M$}
\psfrag{A}{\hspace{-0.15cm}$M'$}
\rsdraw{0.45}{0.75}{tensortwoaction}\;,$$
and the monoidal product of morphisms is inherited from $\mc B$.
\subsection{Categorical comodules} Let~$(C,\Delta,\varepsilon)$ a coalgebra in a monoidal category~$\mc B$.
A \emph{left~$C$-comodule} in $\mc B$ is a pair~$(N,\gamma)$, where~$\gamma \co N \to C\tens N$ is a morphism in~$\mc B$, called~\emph{the coaction of~$C$ on~$N$}, which satisfies
$$(\Delta \tens \id_N)\gamma=(\id_C \tens \gamma)\gamma \quad \text{and} \quad (\varepsilon \tens \id_N)\gamma=\id_N.$$
Graphically, the coaction~$\gamma\co N \to C\tens N$ is denoted by
$$\gamma=\,
\psfrag{u}{$C$}
\psfrag{A}{$N$}
\rsdraw{0.45}{0.75}{coaction}\;,$$
so that the axioms of a left $A$-comodule rewrite as
$$\,
\psfrag{u}{\hspace{-0.05cm}$C$}
\psfrag{A}{\hspace{-0.1cm}$N$}
\rsdraw{0.45}{0.75}{coaction1nolab}\;=\,
\psfrag{u}{\hspace{-0.05cm}$C$}
\psfrag{A}{\hspace{-0.1cm}$N$}
\rsdraw{0.45}{0.75}{coaction2nolab}\; \quad \text{and} \quad \psfrag{u}{\hspace{-0.05cm}$C$}
\psfrag{A}{\hspace{-0.1cm}$N$}
\rsdraw{0.45}{0.75}{coaction3nolab}\;=\,
\psfrag{A}{\hspace{-0.1cm}$N$}
\rsdraw{0.45}{0.75}{action4nolab}\;. $$

A morphism $f\co (N,\gamma) \to(N',\gamma')$ between two left $A$-comodules $(N,\gamma)$ and $(N',\gamma')$ is a morphism $f\co N \to N'$ in $\mc B$ such that $\gamma'f=(\id_C\tens f)\gamma$.
With composition inhe\-rited from~$\mc B$, left~$C$-comodules and morphisms between them form a category~$_{C}{\Comod}$.

When $\mc B$ is braided and~$C$ is a bialgebra in $\mc B$, the category~$_{C}{\Comod}$ is monoidal:
the unit object of~$_{C}{\Comod}$ is the pair~$(\uu,u)$, the monoidal product of two left~$C$-comodules~$(N,\gamma)$ and~$(N',\gamma')$ is given by the pair~$(N\tens N', \delta)$, where $$\delta= (m \tens \id_{N\tens N'})(\id_C\tens \tau_{N,C} \tens \id_{N'})(\gamma\tens \gamma')=\,
\psfrag{u}{\hspace{-0.05cm}$C$}
\psfrag{B}{\hspace{-0.2cm}$N$}
\psfrag{A}{\hspace{-0.15cm}$N'$}
\rsdraw{0.45}{0.75}{tensortwocoaction}\;,$$
and the monoidal product of morphisms is inherited from $\mc B$.

\subsection{Diagonal actions} \label{diagonalacts} Let $H$ be a bialgebra in a braided category $\mc B$. The \emph{left diagonal action} of $H$ on $H^{\tens n}$ is defined inductively by
\[ \,
\psfrag{u}{$\uu$}
\psfrag{A}{$H$}
\rsdraw{0.45}{0.75}{leftH0actdiag2}\; = \,
\psfrag{C}{$H$}
\rsdraw{0.45}{0.75}{counitCMnolab}
\;  \quad \text{ and } \quad
\,
\psfrag{u}{\hspace{-0.1cm}$H^{\tens n}$}
\psfrag{A}{$H$}
\rsdraw{0.45}{0.75}{leftH0actdiag2}\;\hspace{0.2cm}= \,
\psfrag{u}{$H^{\tens n-1}$}
\psfrag{A}{$H$}
\rsdraw{0.55}{0.75}{leftHnact}\;
\quad \text{ for } n\ge 1.
\]
Note that $$\,
\psfrag{u}{$H$}
\psfrag{A}{$H$}
\rsdraw{0.45}{0.75}{leftH0actdiag2}\; = \,
\psfrag{A}{$H$}
\rsdraw{0.45}{0.75}{multiplicationCMnolab}
\; \quad \text{and} \quad
 \,
\psfrag{u}{\hspace{-0.1cm}$H^{\tens n}$}
\psfrag{A}{$H$}
\rsdraw{0.45}{0.75}{leftH0actdiag2}\; \hspace{0.2cm}=
\,
\psfrag{A}{$H$}
\psfrag{a}{$H$}
\psfrag{e}{$H$}
\psfrag{n}{$H$}
\rsdraw{0.55}{0.75}{leftactdiagHn}\; \quad \text{ for } n\ge 2.$$
Similarly, the \emph{right diagonal action} of $H$ on $H^{\tens n}$ is defined inductively by
\[\,
\psfrag{u}{\hspace{0.15cm}$\uu$}
\psfrag{A}{$H$}
\rsdraw{0.45}{0.75}{rightH0actdiag2}\; = \,
\psfrag{C}{$H$}
\rsdraw{0.45}{0.75}{counitCMnolab}
\;
 \quad \text{ and } \quad \,
\psfrag{u}{\hspace{-0.35cm}$H^{\tens n}$}
\psfrag{A}{$H$}
\rsdraw{0.45}{0.75}{rightH0actdiag2}\;=\quad \,
\psfrag{u}{\hspace{-0.65cm}$H^{\tens n-1}$}
\psfrag{A}{$H$}
\rsdraw{0.55}{0.75}{rightHnact}\; \quad \text{ for } n\ge 1.
\]
Note that
$$  \,
\psfrag{u}{$H$}
\psfrag{A}{$H$}
\rsdraw{0.45}{0.75}{rightH0actdiag2}\; =\,
\psfrag{A}{$H$}
\rsdraw{0.45}{0.75}{multiplicationCMnolab}
\;
\quad \text{and} \quad
\,
\psfrag{u}{\hspace{-0.35cm}$H^{\tens n}$}
\psfrag{A}{$H$}
\rsdraw{0.45}{0.75}{rightH0actdiag2}\;= \,
\psfrag{A}{$H$}
\psfrag{a}{$H$}
\psfrag{e}{$H$}
\psfrag{n}{$H$}
\rsdraw{0.55}{0.75}{rightactdiagHn}\;\quad \text{ for } n\ge 2.
$$
It follows from the definitions, that if $\sigma\co \uu \to H$ is a coalgebra morphism, then
$$\,
\psfrag{u}{\hspace{-0.3cm}$H^{\tens n}$}
\psfrag{c}{$\sigma$}
\psfrag{A}{$H$}
\rsdraw{0.55}{0.75}{diagsigma2}\; =
\underbrace{\,
\psfrag{u}{\hspace{-0.1cm}$H$}
\psfrag{c}{$\sigma$}
\rsdraw{0.55}{0.75}{diagsigma3}\; \cdots \,
\psfrag{u}{\hspace{-0.1cm}$H$}
\psfrag{c}{$\sigma$}
\rsdraw{0.55}{0.75}{diagsigma3}\;}_{n \text{ times}} \quad \text{and} \quad
\,
\psfrag{u}{\hspace{-0.3cm}$H^{\tens n}$}
\psfrag{c}{$\sigma$}
\psfrag{A}{$H$}
\rsdraw{0.55}{0.75}{diagsigma}\; =
\underbrace{\,
\psfrag{u}{\hspace{-0.1cm}$H$}
\psfrag{c}{$\sigma$}
\rsdraw{0.55}{0.75}{diagsigma1}\; \cdots \,
\psfrag{u}{\hspace{-0.1cm}$H$}
\psfrag{c}{$\sigma$}
\rsdraw{0.55}{0.75}{diagsigma1}\;}_{n \text{ times}}.
$$

\subsection{Adjoint actions} \label{adjact} Let $H$ be a Hopf algebra in a braided category $\mc B$.
The \emph{left adjoint action} of $H$ on $H^{\tens n}$ is defined inductively by
\[\,
\psfrag{u}{\hspace{-0.1cm}$H$}
\psfrag{A}{$\uu$}
\rsdraw{0.45}{0.75}{leftadjact02}\;=
\,
\psfrag{C}{$H$}
\rsdraw{0.45}{0.75}{counitCMnolab}
\;
\qquad \text{and} \qquad
\,
\psfrag{u}{\hspace{-0.1cm}$H$}
\psfrag{A}{$H^{\tens n}$}
\rsdraw{0.45}{0.75}{leftadjact02}\; \hspace{0.1cm}= \,
\psfrag{u}{\hspace{-0.1cm}$H$}
\psfrag{A}{$H$}
\psfrag{H}{\hspace{-0.2cm}$H^{\tens n-1}$}
\rsdraw{0.55}{0.75}{leftadjact2}\; \qquad \text{ for } n\ge 1.
\]
Similarly, the \emph{right adjoint action} of $H$ on $H^{\tens n}$ is defined inductively by
$$
\,
\psfrag{u}{\hspace{-0.1cm}$\uu$}
\psfrag{A}{$H$}
\rsdraw{0.45}{0.75}{rightadjact02}\;=\,
\psfrag{C}{$H$}
\rsdraw{0.45}{0.75}{counitCMnolab}
\; \qquad \text{ and } \qquad  \,
\psfrag{u}{\hspace{-0.5cm}$H^{\tens n}$}
\psfrag{A}{$H$}
\rsdraw{0.45}{0.75}{rightadjact02}\;= \,
\psfrag{A}{$H$}
\psfrag{H}{\hspace{-0.1cm}$H$}
\psfrag{u}{\hspace{-0.4cm}$H^{\tens n-1}$}
\rsdraw{0.55}{0.75}{rightadjact2}\; \qquad \text{ for } n\ge 1.
$$
Note that $$
\,
\psfrag{u}{\hspace{-0.1cm}$H$}
\psfrag{A}{$H$}
\rsdraw{0.45}{0.75}{leftadjact02}\;=
\,
\psfrag{u}{\hspace{-0.1cm}$H$}
\psfrag{A}{$H$}
\rsdraw{0.55}{0.75}{leftadjact1}\;  \qquad \text{and} \qquad
\,
\psfrag{u}{\hspace{-0.2cm}$H$}
\psfrag{A}{$H$}
\rsdraw{0.45}{0.75}{rightadjact02}\;
=\,
\psfrag{u}{\hspace{-0.1cm}$H$}
\psfrag{A}{$H$}
\rsdraw{0.55}{0.75}{rightadjact1}\;.
$$
It follows from the definition, that if~$\sigma\co \uu \to H$ is a coalgebra morphism, then
$$
\,
\psfrag{u}{\hspace{-0.3cm}$H^{\tens n}$}
\psfrag{c}{$\sigma$}
\psfrag{A}{$H$}
\psfrag{l}{\hspace{-0.05cm}$\delta$}
\rsdraw{0.55}{0.75}{adsigmaright}\; =
\underbrace{\,
\psfrag{u}{$H$}
\psfrag{c}{$\sigma$}
\psfrag{A}{$H$}
\psfrag{l}{\hspace{-0.05cm}$\delta$}
\rsdraw{0.55}{0.75}{adsigmaright}\;  \cdots \,
\psfrag{u}{$H$}
\psfrag{c}{$\sigma$}
\psfrag{A}{$H$}
\psfrag{l}{\hspace{-0.05cm}$\delta$}
\rsdraw{0.55}{0.75}{adsigmaright}\;}_{n \text{ times}}.$$
\subsection{Coadjoint coactions} \label{coadjcoact} Let $H$ be a Hopf algebra in a braided category $\mc B$.
The \emph{left coadjoint coaction} of $H$ on $H^{\tens n}$ is defined inductively by
\[
\,
\psfrag{u}{\hspace{-0.1cm}$H$}
\psfrag{A}{$\uu$}
\rsdraw{0.45}{0.75}{leftcoadjcoact02}\;=\rsdraw{0.45}{0.75}{unitnolab}
\qquad \text{and} \qquad
\,
\psfrag{u}{\hspace{-0.1cm}$H$}
\psfrag{A}{$H^{\tens n}$}
\rsdraw{0.45}{0.75}{leftcoadjcoact02}\; \hspace{0.15 cm}
=
\,
\psfrag{u}{$H$}
\psfrag{A}{$H$}
\psfrag{H}{\hspace{-0.25cm}$H^{\tens n-1}$}
\rsdraw{0.55}{0.75}{leftcoadjcoact2}\; \qquad \text{ for } n\ge 1.
\]
Similarly, the \emph{right coadjoint coaction} of $H$ on $H^{\tens n}$ is defined inductively by
$$\,
\psfrag{u}{$\uu$}
\psfrag{A}{$H$}
\rsdraw{0.45}{0.75}{rightcoadjcoact02}\; =\rsdraw{0.45}{0.75}{unitnolab} \qquad \text{and} \qquad \,
\psfrag{u}{\hspace{-0.45cm}$H^{\tens n}$}
\psfrag{A}{$H$}
\rsdraw{0.45}{0.75}{rightcoadjcoact02}\; = \,
\psfrag{A}{\hspace{-0.4cm}$H^{\tens n-1}$}
\psfrag{H}{\hspace{-0.15cm}$H$}
\rsdraw{0.55}{0.75}{rightcoadjcoact2}\; \qquad \text{ for } n\ge 1.$$
Note that
$$
\,
\psfrag{u}{\hspace{-0.1cm}$H$}
\psfrag{A}{$H$}
\rsdraw{0.45}{0.75}{leftcoadjcoact02}\; = \,
\psfrag{u}{$H$}
\psfrag{A}{$H$}
\rsdraw{0.45}{0.75}{leftcoadjcoact1}\;
 \qquad \text{and} \qquad \,
\psfrag{h}{$H$}
\psfrag{A}{$H$}
\rsdraw{0.45}{0.75}{rightcoadjcoact02}\;=\,
\psfrag{u}{\hspace{-0.1cm}$H$}
\psfrag{A}{$H$}
\rsdraw{0.45}{0.75}{rightcoadjcoact1}\;.
$$
It follows from the definition, that if~$\delta\co H\to \uu$ is an algebra morphism, then
$$\,
\psfrag{A}{\hspace{-0.3cm}$H^{\tens n}$}
\psfrag{c}{$\sigma$}
\psfrag{u}{\hspace{0.55cm}$H$}
\psfrag{l}{\hspace{-0.05cm}$\delta$}
\rsdraw{0.55}{0.75}{coaddelta}\; =
\underbrace{\,
\psfrag{A}{$H$}
\psfrag{c}{$\sigma$}
\psfrag{u}{\hspace{0.55cm}$H$}
\psfrag{l}{\hspace{-0.05cm}$\delta$}
\rsdraw{0.55}{0.75}{coaddelta}\;  \cdots \,
\psfrag{A}{$H$}
\psfrag{c}{$\sigma$}
\psfrag{u}{\hspace{0.55cm}$H$}
\psfrag{l}{\hspace{-0.05cm}$\delta$}
\rsdraw{0.55}{0.75}{coaddelta}\;}_{n \text{ times}}.$$

\subsection{Pivotal categories}
A \emph{pivotal} category is a monoidal category $\mc B$ such that each object $X$ of
 $\mc B$ has a \emph{dual} object $X^*$ and four morphisms
\begin{align*}
  &\evl_X \co X^* \tens X \to \uu, \quad \coevl_X \co \uu \to X\tens X^*, \\
  &\evr_X \co X \tens X^* \to \uu, \quad \coevr_X \co \uu \to X^* \tens X,
\end{align*}
satisfying some conditions. Briefly, these say that the associated left/right
dual functors coincide as monoidal functors (see \cite[Chapter 1]{moncatstft} for more details). The latter implies that the dual morphism $f^* \co Y^* \to X^*$ of a morphism $f\co X\to Y$ in $\mc B$ is computed by
\begin{align*}
  f^*&=(\id_{X^*} \tens \evr_Y )(\id_{X^*} \tens f \tens \id_{Y^*})(\coevr_X \tens \id_{Y^*})= \\
  & =(\evl_Y \tens \id_{{X^*}})(\id_{{Y^*}} \tens f \tens \id_{{X^*}})(\id_{{Y^*}} \tens \coevl_X).
\end{align*}
We extend the graphical calculus for monoidal categories (see Section~\ref{cg}) to pivotal ca\-te\-gories by orienting arcs. If an arc colored by~$X$ is oriented upwards, the represented object in source/target of corresponding morphism is~$X^*$.
For example,~$\id_{X}, \id_{X^*}$, and a morphism~$f\co X\tens Y^* \tens Z \to U \tens V^*$ are depicted by
\[
\id_{X}=
\,
\psfrag{X}{$X$}
\rsdraw{0.45}{0.75}{idX}
\;,\quad
\id_{X^*}=
\,
\psfrag{X}{$X$}
\rsdraw{0.45}{0.75}{idXD}
\;
=
\,
\psfrag{X}{$X^*$}
\rsdraw{0.45}{0.75}{idX}
\;,
\quad \text{and} \quad
f=
\,
\psfrag{X}{$X$}
\psfrag{Y}{$Y$}
\psfrag{Z}{$Z$}
\psfrag{U}{$U$}
\psfrag{V}{$V$}
\psfrag{f}{\hspace{0.2cm}$f$}
\rsdraw{0.45}{0.75}{exmorcgCM}
\;. \]
The morphisms $\evl_X, \evr_X, \coevl_X$, and $\coevr_X$ are respectively depicted by
\[
\,
\psfrag{X}{$X$}
\rsdraw{0.45}{0.75}{evlcg}
\;, \quad
\,
\psfrag{X}{$X$}
\rsdraw{0.45}{0.75}{evrcg}
\;, \quad
\,
\psfrag{X}{$X$}
\rsdraw{0.45}{0.75}{coevlcg}
\;,  \quad \text{and} \quad
\,
\psfrag{X}{$X$}
\rsdraw{0.45}{0.75}{coevrcg}
\;.
\]

\subsection{Left and right twists} \label{ribbon} Let $\mc B$ be a braided pivotal category.
The \emph{left twist} of an object~$X$ of $\mc B$ is defined by
\[\theta^l_X= \, \psfrag{X}{$X$}
\rsdraw{0.45}{0.75}{ribbonlplus}\;= (\id_X\tens \evr_X)(\tau_{X,X}\tens \id_{X^*})(\id_X\tens \coevl_X)\co X\to X,\]
while the \emph{right twist} of $X$ is defined by
\[\theta^r_X= \, \psfrag{X}{$X$}
\rsdraw{0.45}{0.75}{ribbonrplus}\;=(\evl_X\tens \id_{X})(\id_{X^*}\tens \tau_{X,X})(\coevr_X\tens \id_X)\co X\to X.\]
The left and the right twist are natural isomorphisms with inverses
$$
(\theta_X^l)^{-1}= \, \psfrag{X}{$X$}
\rsdraw{0.45}{0.75}{ribbonlminus}\; \quad \text{and} \quad (\theta_X^r)^{-1}= \, \psfrag{X}{$X$}
\rsdraw{0.45}{0.75}{ribbonrminus}\;.$$
The left twist $\theta^l=\{\theta^l_X\co X\to X\}_{X\in\Ob{\mc B}}$ and the right twist $\theta^r=\{\theta^r_X\co X\to X\}_{X\in\Ob{\mc B}}$ are twists for $\mc B$ in the sense of Section \ref{catswithtwist}.

A \emph{ribbon} category is a braided pivotal category~$\mc B$ whose left and right twist coincide. Then~$\theta=\theta_l = \theta_r$ is called the \emph{twist} of $\mc B$.

\subsection{Coends}
Let~$\mc C$ and~$\mc D$ be any categories and~$F\co \mc C^{\opp} \times \mc C \to \mc D$ a functor.
A \textit{dinatural transformation} between~$F$ and an object $D$ in $\mc D$ is a function~$d$ that assigns to any object~$X$ in~$\mc C$ a morphism~$d_X\co F(X,X) \to D$ such that for all morphisms~$f\co X \to Y$ in $\mc C$ the following diagram commutes:
\[
\xymatrix@R+1pc@C+4pc
{F(Y,X)\ar[d]_{F(\id_Y,f)} \ar[r]^{F(f,\id_X)} & F(X,X) \ar[d]^{d_X} \\
F(Y,Y) \ar[r]_-{d_Y} &D.
}
\]

A \emph{coend} of a functor~$F\co \mc C^{\opp} \times \mc C \to \mc D$ is a pair~$(C,i)$ where~$C$ is an object of~$\mc D$ and~$i$ is a dinatural transformation from~$F$ to~$C$, which is universal among all dinatural transformations.
More precisely, for any dinatural transformation~$d$ from~$F$ to~$D$, there exists a unique morphism~$\varphi\co C\to D$ in~$\mc D$ such that~$d_X=\varphi i_X$ for all~$X \in \Ob{\mc C}$.
A coend~$(C,i)$ of a functor~$F$, if it exists, is unique up to a unique isomorphism commuting with the dinatural transformation.

\subsection{Coend of a pivotal category}\label{CoendRibbon}
Let $\mc B$ be a pivotal category. The \emph{coend} of~$\mc B$, if it exists, is the coend $(H,i)$ of the functor~$F\co \mc{B}^\opp \times \mc B \to \mc B$ defined by
\[F(X, Y)=X^{*}\tens Y \quad \text{and} \quad F(f,g)=f^*\tens g.\]
We depict the universal dinatural transformation $i=\{i_X\co X^*\tens X \to H\}_{X\in \Ob{\mc B}}$ as
\[i_X=
\,
\psfrag{X}{$X$}
\psfrag{C}{$H$}
\psfrag{G}{$X$}
\rsdraw{0.45}{0.75}{dincoend}
\;.
\]
Note that $H$ is a co\-al\-ge\-bra in $\mc B$ with co\-mul\-ti\-pli\-ca\-tion $\Delta \co H\to H\tens H$ and counit $\varepsilon\co H\to \uu$, which are unique morphisms such that, for all $X\in \Ob{\mc B}$,
$$ \,
\psfrag{X}{$X$}
\psfrag{Y}{$Y$}
\psfrag{C}{$H$}
\psfrag{f}{\hspace{-0.15cm}$\Delta$}
\psfrag{G}{$X$}
\rsdraw{0.45}{0.75}{dindelta1}
\;=~~
\,
\psfrag{X}{$X$}
\psfrag{G}{$X$}
\psfrag{Y}{$Y$}
\psfrag{C}{$H$}
\rsdraw{0.45}{0.75}{dindelta2}
\;
\qquad \text{and} \qquad
\,
\psfrag{g}{\hspace{0.03cm}$\varepsilon$}
\psfrag{X}{$X$}
\psfrag{C}{$H$}
\rsdraw{0.45}{0.75}{dineps1}
\;
=~~
\,
\psfrag{X}{$X$}
\rsdraw{0.45}{0.75}{dineps3}
\;.$$

The coalgebra~$(H,\Delta, \varepsilon)$ coacts on the objects in $\mc B$ via the~\emph{universal coaction} defined for any~$X\in \Ob{\mc B}$ by
$$\delta_X=(\id_X\tens i_X)\circ(\coevl_X\tens \id_X).$$
We will denote it graphically as
  $$\delta_X=\,
    \psfrag{X}{\hspace{-0.1cm}$X$}
    \psfrag{C}{$H$}
    \rsdraw{0.55}{0.75}{unicoactCM}
  \;.$$
Note that~$\delta_H$ is the right coadjoint coaction of~$H$ on~$H$ (see Section~\ref{coadjcoact}).
If~$\mc B$ is braided, then~$H$ is a Hopf algebra in~$\mc B$. Its unit is~$u=\delta_{\uu} \co \uu \to H$ and its multiplication~$m \co H\tens H\to H$ and antipode~$S \co H\to H$ are characterized as follows: for all~$X,Y \in \Ob{\mc B}$,
$$\,
    \psfrag{X}{$X$}
    \psfrag{Y}{$Y$}
    \psfrag{f}{\hspace{-0.1cm}$m$}
    \psfrag{C}{$H$}
    \rsdraw{0.45}{0.75}{unimult}
  \; =
  \,
    \psfrag{XY}{$X\tens Y$}
    \psfrag{C}{$H$}
    \rsdraw{0.45}{0.75}{unimult1}
  \; \qquad \text{and} \qquad \,
    \psfrag{X}{$X$}
    \psfrag{C}{$H$}
\psfrag{g}{\hspace{-0.03cm}$S$}
    \rsdraw{0.45}{0.75}{unianti}
  \; =\,
    \psfrag{X}{$X$}
    \psfrag{C}{$H$}
    \rsdraw{0.45}{0.75}{unianti1}
  \;                           .$$
We refer to \cite[Chapter 6]{moncatstft} for details.

\section{Simplicial, paracyclic, and cyclic objects} \label{simplicialprelimi}
In this section we recall the notions of (co)simplicial, para(co)cyclic, and (co)cyclic objects in a category.

\subsection{The simplicial category} \label{simplicial}
The \emph{simplicial category}~$\Delta$ is defined as fol\-lows.
The ob\-jects of~$\Delta$ are the non\-ne\-ga\-tive in\-te\-gers~$n \in \N$.
A morphism~$n\to m$ in~$\Delta$ is an in\-creasing map bet\-ween sets~$\fs{n}=\{0,\dots,n\}$ and~$\fs{m}=\{0, \dots,m\}$.
For~$n\in \N^*$ and~$0\le i \le n$, the~$i$-th \emph{coface}~$\delta_i^n \co n-1 \to n$ is the unique increasing in\-jection from~$\fs{n-1}$ into~$\fs{n}$ which misses~$i \in\fs{n}$.
For~$n\in \N$ and~$0\le j \le n$, the~$j$-th~\emph{codegeneracy}~$\sigma_j^n \co n+1 \to n$ is the unique increasing sur\-jection from~$\fs{n+1}$ onto~$\fs{n}$ which sends both~$j$ and~$j+1$ to~$j$.

It is well known (see~\cite[Lemma 5.1]{MCLHomology}) that mor\-phisms in~$\Delta$ are ge\-ne\-rated by co\-fa\-ces~$\{\delta_i^n\}_{0\leq i \leq n, n\in \N^*}$ and codegeneracies~$\{\sigma_j^n\}_{0\leq j \leq n, n\in \N}$ subject to the simplicial relations~\eqref{sr}:
\begin{equation}
\tag{SR} \label{sr} \
\begin{aligned}
\delta_j \delta_i& = \delta_i \delta_{j-1} \text{\quad for }i<j, \\
\sigma_j\sigma_i&=\sigma_i\sigma_{j+1} \text{\quad for } i \leq j,
\\
\sigma_j\delta_i&= \begin{cases}
\delta_i \sigma_{j-1} & \text{for } i<j, \\
\id_{n} & \text{for } i=j,~~ i=j+1,\\
\delta_{i-1}\sigma_j & \text{for } i>j+1.
\end{cases}
\end{aligned}
\end{equation}

\subsection{The paracyclic category}\label{paracyccat}
The~\emph{paracyclic category}~$\Delta C_{\infty}$ is defined as follows.
The objects of~$\Delta C_{\infty}$ are the nonnegative integers~$n\in \N$.
The morphisms are generated by morphisms~$\{\delta_i^n\}_{n\in \N^*, 0\le i \le n}$, called \emph{cofaces}, morphisms~$\{\sigma_j^n\}_{n\in \N, 0\le j \le n }$, called \emph{codegeneracies}, and isomorphisms~$\{\tau_n\co n \to n\}_{n\in \N}$, called~\emph{paracocyclic operators}, satisfying the simplicial relations \eqref{sr} and the following~\emph{paracyclic compatibility relations} \eqref{pcr}:
\begin{equation}
\tag{PCR} \label{pcr} \
\begin{aligned}
\tau_n \delta_i &= \delta_{i-1} \tau_{n-1} \text{\quad for } 1 \leq i \leq n, \\
\tau_n\delta_0&=\delta_n, \\
\tau_n\sigma_i&=\sigma_{i-1}\tau_{n+1} \text{\quad for } 1 \leq i \leq n, \text{ and }\\
\tau_n\sigma_0&=\sigma_n\tau_{n+1}^2.
\end{aligned}
\end{equation}
Note that $\Delta$ is a subcategory of $\Delta C_{\infty}$.

\subsection{The cyclic category}
The~\emph{cyc\-lic ca\-te\-gory}~$\Delta C$ is defined as follows.
The ob\-jects of~$\Delta C$ are the non\-ne\-ga\-tive in\-te\-gers~$n\in \N$.
The mor\-phisms are ge\-ne\-ra\-ted by  morphisms~$\{\delta_i^n\}_{n\in \N^*, 0\le i \le n}$, called \emph{cofaces}, morphisms~$\{\sigma_j^n\}_{n\in \N, 0\le j \le n }$, called \emph{codegeneracies}, and isomorphisms~$\{\tau_n\co n \to n\}_{n\in \N}$, called~\emph{cocyc\-lic ope\-ra\-tors}, which satisfy the relations~\eqref{sr},~\eqref{pcr}, and the~\textit{cycli\-city condition}~\eqref{cc}:
\begin{equation}
\tag{CC} \label{cc}\tau_n ^{n+1} = \id_{n}.
\end{equation}
Note that $\Delta C$ is a quotient of $\Delta C_\infty$.

\subsection{(Co)simplicial, para(co)cyclic, and (co)cyclic objects in a category} \label{simplicial&co}

Let $\mc C$ be any category.
A \textit{simplicial object} in~$\mc C$ is a functor~$X\co \Delta^\opp \to \mc C$, a \textit{paracyclic} object in~$\mc C$ is a functor~$\Delta C_\infty^\opp \to \mc C$, and a \emph{cyclic object} in~$\mc C$ is a functor~$ \Delta C^{\opp} \to \mc C.$
Dually, a~\emph{cosimplicial object} in~$\mc{C}$  is a functor~$\Delta \to \mc C$, a~\textit{paracocyclic} object in~$\mc C$ is a functor~$\Delta C_\infty \to \mc C$, and a~\emph{cocyclic object} in~$\mc C$ is a functor~$\Delta C \to \mc C.$ A (co)simplicial/para(co)cyclic/(co)cyclic object in the category of sets (respectively, of $\kk$-modules) are called \emph{(co)simplicial/para(co)cyclic/(co)cy\-clic sets} (respectively, \emph{$\kk$-modules}).

A \emph{morphism} between two (co)simplicial/para(co)cyclic/(co)cyclic objects is a natural transformation between them.
One often denotes the image of a morphism~$f$ under a (co)simplicial/para(co)cyclic/(co)cy\-clic by the same letter~$f$.

Since the categories $\Delta, \Delta C_\infty, \Delta C$ are defined by generators and relations, a (co)sim\-pli\-cial/para(co)cyclic/(co)cyclic object in a category is entirely determined by the images of the generators satisfying the corresponding relations.
For example, a pa\-ra\-co\-cy\-clic object~$Y$ in~$\mc C$ may be seen as a family~$Y_\bullet=\{Y_n\}_{n\in \N}$ of objects of~$\mc C$ equipped with morphisms~$\{\delta_i^n \co Y_{n-1} \to Y_{n}\}_{n\in \N^*, 0\le i \le n}$, called \textit{cofaces}, morphisms~$\{\sigma_j^n \co Y_{n+1} \to Y_{n}\}_{n\in \N, 0\le j \le n}$, called~\textit{codegeneracies}, and isomorphisms~$\{\tau_n\co Y_n \to Y_n\}_{n\in \N}$, called~\textit{paracocyclic operators}, subject to the relations \eqref{sr} and \eqref{pcr}.
Similarly, a morphism~$\beta\co Y \to Y'$ between two paracocyclic objects~$Y_\bullet$ and~$Y'_\bullet$ in~$\mc C$ is explicited as a family~$\{\beta_n\co Y_n \to Y'_n\}_{n\in \N}$ of morphisms in~$\mc C$ satisfying
\begin{align*}
\beta_n\delta_i^n&= \delta_i^n\beta_{n-1} \quad \text{for any }  n\in \N^* \text{ and } 0\le i \le n, \\
\beta_n \sigma_j^n&=\sigma_j^n\beta_{n+1} \quad \text{for any }  n\in \N \text{ and } 0\le j \le n,\\
\beta_n\tau_n&=\tau_n\beta_n  \quad \text{for any }  n\in \N.
\end{align*}

Clearly, the composition of a (co)simplicial/para(co)cyclic/(co)cyclic object $X$ in $\mc C$ with a functor $F\co \mc C \to \mc D$ is a (co)simplicial/para(co)cyclic/(co)cyclic object $FX$ in $\mc D$.  In particular, useful examples are provided by the co\-va\-riant and the con\-tra\-va\-riant~$\Hom$-func\-tors~$\Hom_{\mc C}(I,-)$ and~$\Hom_{\mc C}(-,I)$, where $I$ is an ob\-ject of~$\mc C$. In this case, we denote: $$\Hom_{\mc C}(I,X)=\Hom_{\mc C}(I,-) \circ X \quad \text{and} \quad \Hom_{\mc C}(X,I)=\Hom_{\mc C}(-,I) \circ X.$$

\begin{lem}\label{twistinginducescyclic} Let~$\mc B$ be a~$\kk$-li\-ne\-ar brai\-ded ca\-te\-go\-ry with a twist $\theta$. If~$Y_\bullet=\{Y_n\}_{n\in \N}$ is a pa\-ra\-co\-cy\-clic object in~$\mc B$ such that its paracocyclic operator satisfies~$\tau_n^{n+1}=\theta_{Y_n}$ for each~$n\in \N$, then
\begin{enumerate}
\item[(a)] $\Hom_{\mc B}(\uu,Y_\bullet)=\{\Hom_{\mc B}(\uu,Y_n)\}_{n\in \N}$ is a cocyclic $\kk$-module,
\item[(b)] $\Hom_{\mc B}(Y_\bullet,\uu)=\{\Hom_{\mc B}(Y_n,\uu)\}_{n\in \N}$ is a cyclic $\kk$-module.
\end{enumerate}
\end{lem}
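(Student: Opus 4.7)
The plan is to exploit the fact, recalled in Section~\ref{simplicial&co}, that post-composing a (para)cocyclic object with a covariant functor yields a (para)cocyclic object, and that post-composing with a contravariant functor swaps the direction, turning a paracocyclic object into a paracyclic object. Since $\mc B$ is $\kk$-linear, the covariant functor $\Hom_{\mc B}(\uu,-)\co \mc B \to \Mod_\kk$ and the contravariant functor $\Hom_{\mc B}(-,\uu)\co \mc B \to \Mod_\kk$ take values in $\kk$-modules. Composing with~$Y_\bullet$ immediately endows $\Hom_{\mc B}(\uu, Y_\bullet)$ with the structure of a paracocyclic~$\kk$-module and $\Hom_{\mc B}(Y_\bullet,\uu)$ with the structure of a paracyclic~$\kk$-module. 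The whole content of the lemma is therefore to upgrade ``para'' to true cyclicity, i.e.\ to verify the extra relation~\eqref{cc}.

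For part~(a), the paracocyclic operator induced on $\Hom_{\mc B}(\uu, Y_n)$ is post-composition with $\tau_n$, so for any $f\co \uu \to Y_n$ one has $\tau_n^{n+1}(f)=\tau_n^{n+1}\circ f$. By the hypothesis $\tau_n^{n+1}=\theta_{Y_n}$, this equals $\theta_{Y_n}\circ f$. Naturality of the twist~$\theta$ applied to the morphism~$f$ gives $\theta_{Y_n}\circ f = f\circ \theta_{\uu}$, and since $\theta_{\uu}=\id_{\uu}$ (a consequence of the twist condition~\eqref{twistcondition} noted in Section~\ref{catswithtwist}), we conclude $\tau_n^{n+1}(f)=f$. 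Hence the cocyclicity condition holds on $\Hom_{\mc B}(\uu, Y_\bullet)$.

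Part~(b) is symmetric. The induced paracyclic operator on $\Hom_{\mc B}(Y_n,\uu)$ is pre-composition with~$\tau_n$, so for $f\co Y_n \to \uu$ one has $\tau_n^{n+1}(f)=f\circ \tau_n^{n+1}=f\circ \theta_{Y_n}$. Naturality of~$\theta$ with respect to~$f$ gives $f\circ \theta_{Y_n}=\theta_{\uu}\circ f = f$, again using $\theta_{\uu}=\id_{\uu}$. This yields the cyclicity condition on $\Hom_{\mc B}(Y_\bullet,\uu)$.

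There is essentially no obstacle here: the non-trivial ingredients (composition of (para)cyclic objects with functors, naturality of~$\theta$, and $\theta_{\uu}=\id_{\uu}$) are all already in place in the paper, and the argument amounts to a one-line application of naturality in each case. The only thing to keep track of is the variance, which accounts for why the covariant Hom yields a cocyclic module while the contravariant Hom yields a cyclic module.
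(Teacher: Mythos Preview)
Your proof is correct and follows essentially the same approach as the paper: compose $Y_\bullet$ with the Hom functors to get para(co)cyclic $\kk$-modules, then use the hypothesis $\tau_n^{n+1}=\theta_{Y_n}$ together with naturality of $\theta$ and $\theta_{\uu}=\id_{\uu}$ to establish the (co)cyclicity condition. The paper's proof is identical in substance, though it only writes out part~(a) and says part~(b) is similar.
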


\begin{proof} Let us prove~$(a)$. By composition with~$\Hom_{\mc B}(\uu, -) \co \mc B \to \Mod_\kk$, we obtain that~$\{\Hom_{\mc B}(\uu,Y_n)\}_{n\in \N}$ is a paracocyclic~$\kk$-module. Let us now verify that~\eqref{cc} holds.
Let~$n\in \N$.
The morphism~$\Hom_{\mc B}(\uu,-)(\theta_{Y_n}) \co \Hom_{\mc B}(\uu,Y_n)\to \Hom_{\mc B}(\uu,Y_n)$ is the~$\kk$-linear morphism given by~$f\mapsto \theta_{Y_n}f$.
The naturality of~$\theta$ and the fact that~$\theta_\uu=\id_\uu$ imply that~$\theta_{Y_n}f=f\theta_{\uu}=f$ for all~$f\in \Hom_{\mc B}(\uu,Y_n)$.
Then, using the functoriality of~$Y_\bullet$ and the hypothesis that~$Y_\bullet(\tau_n^{n+1})=\theta_{Y_n}$, we have
$$\left(\Hom_{\mc B}(\uu,Y_\bullet)(\tau_n)\right)^{n+1}=\Hom_{\mc B}(\uu,-)\left(Y_\bullet(\tau_n^{n+1})\right)=\Hom_{\mc B}(\uu,-)(\theta_{Y_n})=\id_{\Hom_{\mc B}(\uu,Y_n)}.$$
Part~$(b)$ is proved similarly.
\end{proof}

\subsection{Cyclic (co)homology} To any cyclic~$\kk$-module~$X\co \Delta C^\opp \to \Mod_\kk$, one can associate a bicomplex~$\textbf{CC}(X)$ (see \cite{weibel}).
The~$n$-th \emph{cyclic homology}~$HC_n(X)$ of $X$ is defined as the~$n$-th homology of the total chain complex associated to the chain bicomplex~$\textbf{CC}(X)$.
A morphism between cyclic~$\kk$-modules induces a levelwise morphism in cyclic homology.

Similarly, to any cocyclic~$\kk$-module~$Y\co \Delta C\to \Mod_\kk$, one can associate a cochain bicomplex~$\textbf{CC}(Y)$, obtained by a construction dual to the one of a chain bicomplex.
The~$n$-th \emph{cyclic cohomology}~$HC^n(Y)$ of~$Y$ is defined as the~$n$-th cohomology of the total cochain complex associated to the cochain bicomplex~$\textbf{CC}(Y)$.
A morphism between cocyclic~$\kk$-modules induces a levelwise morphism in cyclic cohomology.

\section{Modular pairs and braided Connes-Moscovici construction} \label{res1}

In this section,~$\mc B$ is a braided monoidal category and~$H$ is a Hopf algebra in~$\mc B$. We provide a braided generalization of the notion of a modular pair in involution for $H$ and then compute the powers of the paracocyclic operator associated to such a pair (see Theorem~\ref{powers} and its corollaries).

\subsection{Modular pairs} \label{modpair}
A~\emph{modular pair} for~$H$ is a pair~$(\delta,\sigma)$ where~$\delta \co H \to \uu$ is an algebra morphism and~$\sigma \co \uu \to H$ is a coalgebra morphism such that~$\delta \sigma = \id_{\uu}$.
For instance,~$(\varepsilon, u)$ is a modular pair for $H$, where $\varepsilon \co H\to \uu$ and $u \co \uu \to H$ are the counit and unit of $H$, respectively.

Given a twist $\theta$ for $\mc B$, a~$\theta$-\emph{twisted modular pair in involution} for~$H$ is a modular pair~$(\delta, \sigma)$ for~$H$ such that
$$\,
\psfrag{A}{\hspace{-0.05cm}$H$}
\rsdraw{0.55}{0.75}{rmpi1}
\;=
\,
\psfrag{A}{\hspace{-0.05cm}$H$}
\psfrag{c}{$\sigma$}
\psfrag{l}{\hspace{-0.07cm}$\delta$}
\rsdraw{0.55}{0.75}{rmpi}
\;.
$$

Here we use the graphical conventions from Section \ref{cg}.
If~$H$ is \emph{involutive} Hopf algebra in~$\mc B$ in the sense that~$S^2=\theta_H$, then~$(\varepsilon, u)$ is a~$\theta$-twisted modular pair in involution for~$H$.

Note that if $\mc B$ is symmetric with a trivial twist $\id_{\mc B}$ (see Section \ref{catswithtwist}), then an $\id_{\mc B}$-twisted modular pair in involution corresponds to a braided modular pair in involution in the sense of \cite{khalkhalipourkia}.

\subsection{Powers of the paracocyclic operators} \label{maincomputation}

Let $(\delta, \sigma)$ be a modular pair for $H$. For any $n\ge 0$, define the \emph{paracocyclic operator} $\tau_n(\delta, \sigma)\co H^{\tens n} \to H^{\tens n}$ by
\[\tau_0(\delta, \sigma)= \id_{\uu},\quad  \tau_1(\delta, \sigma)= \,
\psfrag{b}{\hspace{-0.05cm}$H$}
\psfrag{i}{\hspace{-0.25cm}$H^{\tens n-1}$}
\psfrag{v}{$\tilde{S}$}
\psfrag{c}{$\sigma$}
\psfrag{l}{\hspace{-0.05cm}$\delta$}
\rsdraw{0.55}{0.75}{CMtau1DS}
\;
\quad \text{and} \quad
\tau_n(\delta, \sigma)=\,
\psfrag{b}{\hspace{-0.05cm}$H$}
\psfrag{i}{\hspace{-0.25cm}$H^{\tens n-1}$}
\psfrag{v}{$\tilde{S}$}
\psfrag{c}{$\sigma$}
\psfrag{l}{\hspace{-0.05cm}$\delta$}
\rsdraw{0.55}{0.75}{CMtaunDS1}
\; \quad \text{for} \quad n\geq 2.
\]
Here we use the diagonal actions defined in Section~\ref{diagonalacts}.
Note that the operators~$\tau_n(\delta,\sigma)$ are the paracocyclic operators of a paracocyclic object in~$\mc B$ associated with~$H$ and ~$(\delta,\sigma)$ (see Section \ref{recall}).
In the following theorem we compute the powers (up to~$n+1$) of~$\tau_n(\delta, \sigma)$.
\begin{thm}\label{powers}
For $n\ge 2$ and $2\leq k \leq n$, we have:
\begin{equation}\label{kthpower}
\left(\tau_n(\delta, \sigma)\right)^k=\,
\psfrag{z}{\hspace{-0.75cm}$H^{\tens k-2}$}
\psfrag{c}{\hspace{-0.25cm}$k-1$}
\psfrag{c+}{$k$}
\psfrag{n}{$n$}
\psfrag{f}{\hspace{-0.6cm}$\left(\tau_{n-1}(\varepsilon,u)\right)^{k-1}$}
\psfrag{g}{\hspace{-0.85cm}$\tau_1(\delta, \sigma)$}
\psfrag{l}{\hspace{-0.1cm}$\delta$}
\psfrag{s}{$\sigma$}
\psfrag{h}{\hspace{-0.6cm}$H^{\tens n-k}$}
\psfrag{d}{\hspace{-0.5cm}$H^{\tens k-1}$}
\psfrag{v}{$\tilde{S}$}
\rsdraw{0.55}{0.75}{taunkDS1}  \; \text{  }.
\end{equation}
In addition,
\begin{equation}\label{n+1thpower}\phantom{blabla}(\tau_n(\delta, \sigma))^{n+1}=\begin{cases}
\,
\psfrag{A}{$H$}
\psfrag{l}{\hspace{-0.05cm}$\delta$}
\psfrag{v}{\hspace{0.05cm}$\sigma$}
\psfrag{u}{$H$}
\rsdraw{0.55}{0.75}{t12DS}  \; &\text{ if } n=1,
\\
\,
\psfrag{z}{\hspace{-0.4cm}$H^{\tens n-1}$}
\psfrag{c}{\hspace{0.2cm}$H$}
\psfrag{f}{\hspace{-0.55cm}$\left(\tau_{n-1}(\varepsilon,u)\right)^{n}$}
\psfrag{g}{\hspace{-1.1cm}$\left(\tau_1(\delta,\sigma)\right)^{2}$}
\psfrag{l}{\hspace{-0.1cm}$\delta$}
\psfrag{s}{$\sigma$}
\psfrag{d}{\hspace{-0.65cm}$H^{\tens n-1}$}
\rsdraw{0.55}{0.75}{taunn+1DS}\; & \text{ if } n\ge 2.\end{cases} \end{equation}
\end{thm}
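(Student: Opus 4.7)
The plan is to prove formula \eqref{kthpower} by induction on $k$, starting from $k=2$, and then to deduce \eqref{n+1thpower} by composing \eqref{kthpower} at $k=n$ with one additional application of $\tau_n(\delta,\sigma)$.

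For the base case $k=2$, I stack two copies of the graphical definition of $\tau_n(\delta,\sigma)$ and simplify using three families of moves: (i) since $\sigma\co\uu\to H$ is a coalgebra morphism, its postcomposition with an iterated comultiplication splits as a tensor power of $\sigma$, as recorded at the end of Section~\ref{diagonalacts}; (ii) since $\delta\co H\to\uu$ is an algebra morphism, one has the dual simplification for iterated multiplications, recorded at the end of Section~\ref{coadjcoact}; (iii) naturality of the braiding together with the anti-multiplicativity and anti-comultiplicativity of $S$ and $\tilde S$ allow me to pass $\tilde S$ past diagonal actions in a controlled way. After these reductions the picture should match the right-hand side of \eqref{kthpower} with $k=2$: the upper box is a single application of $\tau_{n-1}(\varepsilon,u)$ to the last $n-1$ strands, while $\tau_1(\delta,\sigma)$ acts on a single strand grafted in by the braided comultiplication.

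For the inductive step $k\to k+1$, I compose the right-hand side of \eqref{kthpower} on the bottom with one further $\tau_n(\delta,\sigma)$. The newly inserted $\sigma$ at the top and the $\delta$ evaluated on the first tensor factor land on segments that already participate in a diagonal action; applying the antipode axiom $m(\tilde S\tens\id_H)\Delta=u\varepsilon$, together with the modular pair condition $\delta\sigma=\id_\uu$, collapses these contributions to a single extra copy of $\tau_{n-1}(\varepsilon,u)$ on the last $n-1$ strands while leaving $\tau_1(\delta,\sigma)$ on the leading strand unchanged. This gives \eqref{kthpower} with $k+1$ in place of $k$.

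Formula \eqref{n+1thpower} for $n\ge 2$ follows from the same step applied once more to $(\tau_n(\delta,\sigma))^n$: the extra layer now meets not a plain segment of the leading strand but the $\tau_1(\delta,\sigma)$ already sitting there, producing the square $(\tau_1(\delta,\sigma))^2$ in the stated formula. For $n=1$, the operator $\tau_1(\delta,\sigma)^2$ reduces directly by using $\delta\sigma=\id_\uu$ together with the anti-(co)multiplicativity of $\tilde S$. I expect the main technical obstacle to be bookkeeping: each layer of iterated $\tau_n(\delta,\sigma)$ injects many comultiplications, braidings, and multiplications coming from the diagonal action on $H^{\tens n}$ as defined in Section~\ref{diagonalacts}, and recognising the inner $\tau_{n-1}(\varepsilon,u)$ subpattern after all rewrites requires careful use of the level-exchange property together with the anti-(co)multiplicativity of $S$. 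The verification will therefore consist of several pages of Penrose diagram manipulation organised around these identities.
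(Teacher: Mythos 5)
Your overall skeleton does coincide with the paper's: formula \eqref{kthpower} is proved by induction on the exponent with the square as base case, formula \eqref{n+1thpower} for $n\ge 2$ is obtained by composing \eqref{kthpower} at $k=n$ with one further $\tau_n(\delta,\sigma)$ so that a factor $(\tau_1(\delta,\sigma))^2$ appears, and $n=1$ is treated directly. However, the step you lean on in the induction is not correct as stated, and the part of the argument that actually carries the difficulty is asserted rather than addressed. First, the identity you call the antipode axiom for the twisted antipode is false: one has $m(\tilde{S}\tens\id_H)\Delta=u\,\delta$, not $u\,\varepsilon$ (this is Lemma~\ref{algebraicppties}$(c)$, and the discrepancy matters, since it is precisely these collapses that produce the $\delta$-decorations present on the right-hand side of \eqref{kthpower}). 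Moreover, the modular pair condition $\delta\sigma=\id_\uu$ is not what drives \eqref{kthpower} at all; in the paper it only enters the computation of the $(n+1)$-st power and the $n=1$ case, through the simplification of $\tilde{S}$ against $\sigma$ (Remark~\ref{twistedantipodesigma}).

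Second, the claim that the newly inserted layer ``collapses to a single extra copy of $\tau_{n-1}(\varepsilon,u)$ while leaving $\tau_1(\delta,\sigma)$ unchanged'' is exactly the nontrivial content of the proof, and it does not follow from the antipode axiom, naturality, and anti-(co)multiplicativity alone. What is needed, and what your plan does not supply, are specific intermediate identities: the expression of $\tau_n(\delta,\sigma)$ through $\tau_{n-1}(\varepsilon,u)$ (Lemma~\ref{recurrencerelations}), the interaction of $\tau_n(\varepsilon,u)$ with (co)multiplication used to compute the square (Lemma~\ref{squarelemma}), the ``flow'' identities that slide the insertions $m(\id_H\tens\sigma)$ through $\tau_{n-1}(\varepsilon,u)$ and which must be applied $k-1$ times in the inductive step (Lemma~\ref{flow}), and the compatibility of the left diagonal action with the left coadjoint coaction together with the twisted-antipode identities (Lemma~\ref{auxilia}$(b)$, Lemma~\ref{algebraicppties}$(d)$,$(f)$). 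Without formulating and proving statements of this kind, the inductive step is a gap rather than bookkeeping: the rewrites you list do not by themselves recognize the extra $\tau_{n-1}(\varepsilon,u)$ factor, nor do they keep track of where the $\delta$'s and $\sigma$'s of \eqref{kthpower} and \eqref{n+1thpower} end up.
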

In the statements of the theorem, we use the diagonal actions together with the (co)adjoint (co)actions defined in Sections~\ref{adjact} and~\ref{coadjcoact}. Also, an integer~$k$ below an arc denotes the~{$k$-th} tensorand of~$H^{\tens n}.$
We prove Theorem~\ref{powers} in Section~\ref{proofmain} by induction and by using pro\-per\-ties of modular pairs and twisted antipodes.

In the next corollary, we compute the~$(n+1)$-th power of the paracocyclic operator~$\tau_n(\delta,\sigma)$ in terms of the~$(n+1)$-th power of the paracocyclic operator~$\tau_n(\varepsilon,u)$, where~$\varepsilon$ and~$u$ are the counit and unit of~$H$.
\begin{cor}\label{remarkeuds}
For any modular pair $(\delta,\sigma)$ and any $n\in \N$,
\begin{equation*}
(\tau_{n}(\delta,\sigma))^{n+1}=\,
\psfrag{c}{\hspace{-0.1cm}$H^{\tens n}$}
\psfrag{f}{\hspace{-0.5cm}$\left(\tau_{n}(\varepsilon,u)\right)^{n+1}$}
\psfrag{l}{\hspace{-0.1cm}$\delta$}
\psfrag{s}{$\sigma$}
\rsdraw{0.5}{0.75}{taiunn+1deltasigma}\;.
\end{equation*}
\end{cor}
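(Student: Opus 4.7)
The plan is to apply Theorem~\ref{powers} to both modular pairs $(\delta,\sigma)$ and $(\varepsilon,u)$ and then identify the resulting diagrams by local graphical manipulations. The case $n=0$ is immediate, since $\tau_0(\delta,\sigma)=\id_\uu=\tau_0(\varepsilon,u)$ and both sides of the claimed identity reduce to $\id_\uu$.

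For $n=1$, I would use the first case of \eqref{n+1thpower} to write $(\tau_1(\delta,\sigma))^2$ in closed form, and specialise the same formula to $(\varepsilon,u)$ to obtain $(\tau_1(\varepsilon,u))^2$ (which simplifies considerably because $u$ is the unit and $\varepsilon$ is the counit). Unpacking the right-hand side of the corollary for $n=1$ then yields $(\tau_1(\varepsilon,u))^2$ pre-composed with a left coadjoint coaction feeding into $\delta$ and post-composed with a left adjoint action fed by $\sigma$; using that $\sigma$ is a coalgebra morphism and $\delta$ is an algebra morphism (so that $\delta$ distributes over $m$ and $\sigma$ over $\Delta$), together with $\delta\sigma=\id_\uu$, both sides collapse to the same diagram.

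For $n\ge 2$, I would apply the second case of \eqref{n+1thpower} to $(\delta,\sigma)$: it expresses $(\tau_n(\delta,\sigma))^{n+1}$ as a stacked diagram in which $(\tau_{n-1}(\varepsilon,u))^{n}$ acts on the first $n-1$ tensorands and $(\tau_1(\delta,\sigma))^{2}$ acts on the last, the two blocks being coupled together by the diagonal actions and (co)adjoint (co)actions of $H$. Crucially, the same formula applied to $(\varepsilon,u)$ gives exactly the same diagram but with $(\tau_1(\delta,\sigma))^{2}$ replaced by $(\tau_1(\varepsilon,u))^{2}$. Since the only place $\delta$ and $\sigma$ enter the formula for $(\tau_n(\delta,\sigma))^{n+1}$ is through this single local block, the problem reduces to the $n=1$ identity already settled, up to threading the $\delta$- and $\sigma$-labelled strands outward through the $(\tau_{n-1}(\varepsilon,u))^{n}$ box and the coupling strands so that they end up attached to the global input and output of $(\tau_n(\varepsilon,u))^{n+1}$ in the configuration prescribed by the corollary.

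The main obstacle is precisely this last bookkeeping step: one must verify that pulling the $\delta$ and $\sigma$ factors out of the $(\tau_1(\delta,\sigma))^{2}$ sub-block past the diagonal and (co)adjoint (co)actions produces the correct outer decoration. The verification is a routine graphical calculation relying on naturality of the braiding, the identities for iterated (co)multiplications from Section~\ref{prelimiHopf}, and the fact that $\delta$ (resp.\ $\sigma$) commutes with $m$ and $u$ (resp.\ $\Delta$ and $\varepsilon$); any stray loops produced along the way are absorbed by $\delta\sigma=\id_\uu$. I expect no conceptual difficulty beyond this careful tracking, so the proof is essentially a corollary of Theorem~\ref{powers} combined with the defining properties of a modular pair.
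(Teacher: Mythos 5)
Your overall skeleton is the paper's: settle $n=0,1$ from the first case of \eqref{n+1thpower}, and for larger $n$ expand $(\tau_n(\delta,\sigma))^{n+1}$ by the second case of \eqref{n+1thpower}, substitute the $n=1$ identity into the $(\tau_1(\delta,\sigma))^2$ block, and recognize what remains as the $(\varepsilon,u)$ instance of \eqref{n+1thpower}, i.e.\ as $(\tau_n(\varepsilon,u))^{n+1}$ suitably decorated (the paper phrases this as an induction, but only the $n=1$ case is actually used in the inductive step). However, your key claim that ``the only place $\delta$ and $\sigma$ enter the formula for $(\tau_n(\delta,\sigma))^{n+1}$ is through this single local block'' is false, and this is exactly where your bookkeeping plan breaks down. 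Formula \eqref{n+1thpower} also carries a coadjoint coaction feeding $\delta$ and an adjoint action fed by $\sigma$ wrapped around the $(\tau_{n-1}(\varepsilon,u))^{n}$ box acting on the $H^{\tens n-1}$ strands; this is precisely what allows the proof of Corollary~\ref{n+1thpower=twist} to replace that decorated box by $(\tau_{n-1}(\delta,\sigma))^{n}$ by invoking the present corollary. So the $(\delta,\sigma)$ and $(\varepsilon,u)$ instances of \eqref{n+1thpower} differ not only in the inner $(\tau_1)^2$ block but also in these outer decorations, which simply trivialize for $(\varepsilon,u)$.

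Consequently the ``threading'' you describe --- pulling the single $\delta$ and $\sigma$ produced on the last strand outward ``through the $(\tau_{n-1}(\varepsilon,u))^{n}$ box'' so that they end up decorating all of $H^{\tens n}$ --- is not an available graphical move (nothing lets you slide strands through that box), and it could not by itself account for the decoration of the first $n-1$ strands. The correct bookkeeping is the opposite: keep the $\delta$/$\sigma$ decoration already present on the first block in \eqref{n+1thpower}, add the one produced on the last strand by the $n=1$ case, and merge the two into the single global decoration of $H^{\tens n}$ using the fact that the coadjoint coaction of $H^{\tens n}$ composed with the algebra morphism $\delta$, and the adjoint action of $H^{\tens n}$ precomposed with the coalgebra morphism $\sigma$, factor strandwise (the displayed identities at the end of Sections~\ref{adjact} and~\ref{coadjcoact}); the relation $\delta\sigma=\id_\uu$ is not needed anywhere in this argument. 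With that correction your argument coincides with the paper's proof of Corollary~\ref{remarkeuds}.
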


\begin{proof}
We show the result by induction.
For~$n=0$, this follows since the adjoint action on~$\uu$ is given by counit, since the coadjoint coaction on~$\uu$ is given by unit and the fact that~$\tau_0(\varepsilon,u)=\tau_0(\delta,\sigma)=\id_\uu$. Let us check the case~$n=1$. Using~\eqref{n+1thpower} and the fact that~$\tau_1(\varepsilon,u)=S$, we obtain
$$(\tau_1(\delta,\sigma))^2=\,
\psfrag{b}{\hspace{-0.2cm}$H$}
\psfrag{v}{$\sigma$}
\psfrag{l}{\hspace{-0.1cm}$\delta$}
\rsdraw{0.55}{0.75}{tau123DSnolab}\;=\,
\psfrag{b}{\hspace{-0.2cm}$H$}
\psfrag{v}{$\sigma$}
\psfrag{l}{\hspace{-0.1cm}$\delta$}
\psfrag{g}{\hspace{-0.3cm}$(\tau_1(\varepsilon,u))^2$}
\rsdraw{0.55}{0.75}{tau12DSvers2nolab}\;.$$
Suppose that the result is true for an $n\ge 1$ and let us show it for $n+1$.
Indeed, we have
$$(\tau_{n+1}(\delta,\sigma))^{n+2}\overset{(i)}{=}\,
\psfrag{z}{\hspace{-0.2cm}$H^{\tens n}$}
\psfrag{c}{\hspace{0.2cm}$H$}
\psfrag{f}{\hspace{-0.55cm}$\left(\tau_{n}(\varepsilon,u)\right)^{n+1}$}
\psfrag{g}{\hspace{-1.1cm}$\left(\tau_1(\delta,\sigma)\right)^{2}$}
\psfrag{l}{\hspace{-0.1cm}$\delta$}
\psfrag{s}{$\sigma$}
\psfrag{d}{\hspace{-0.35cm}$H^{\tens n}$}
\rsdraw{0.55}{0.75}{taunn+1DS}\;\overset{(ii)}{=}\,
\psfrag{z}{\hspace{-0.2cm}$H^{\tens n}$}
\psfrag{c}{\hspace{0.2cm}$H$}
\psfrag{f}{\hspace{-0.55cm}$\left(\tau_{n}(\varepsilon,u)\right)^{n+1}$}
\psfrag{g}{\hspace{-1.1cm}$\left(\tau_1(\varepsilon,u)\right)^{2}$}
\psfrag{l}{\hspace{-0.1cm}$\delta$}
\psfrag{s}{$\sigma$}
\psfrag{d}{\hspace{-0.35cm}$H^{\tens n}$}
\rsdraw{0.55}{0.75}{taunn+1DSvers2}\;\overset{(iii)}{=}\,
\psfrag{z}{\hspace{-0.2cm}$H^{\tens n}$}
\psfrag{c}{\hspace{0.2cm}$H$}
\psfrag{f}{\hspace{-0.55cm}$\left(\tau_{n}(\varepsilon,u)\right)^{n+1}$}
\psfrag{g}{\hspace{-0.8cm}$\left(\tau_{n+1}(\varepsilon,u)\right)^{n+2}$}
\psfrag{l}{\hspace{-0.1cm}$\delta$}
\psfrag{s}{$\sigma$}
\psfrag{d}{\hspace{-0.35cm}$H^{\tens n}$}
\rsdraw{0.55}{0.75}{taunn+1DSvers21}\;.$$
Here~$(i)$ follows by applying~\eqref{n+1thpower} for the modular pair~$(\delta,\sigma)$,~$(ii)$ by applying the result for~$n=1$, and~$(iii)$ by applying~\eqref{n+1thpower} for the modular pair~$(\varepsilon,u)$.
\end{proof}

The next corollary states that the paracocyclic operator associated with a twisted mo\-du\-lar pair in involution satisfies the ``twisted cocyclicity condition''.
\begin{cor} \label{n+1thpower=twist} If $\mc B$ has a twist $\theta$ and $(\delta,\sigma)$ is a $\theta$-twisted modular pair in involution for $H$, then $(\tau_n(\delta,\sigma))^{n+1}=\theta_{H^{\tens n}}$ for all $n\in \N$.
\end{cor}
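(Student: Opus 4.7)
The plan is to proceed by induction on $n$, with the $\theta$-twisted modular pair in involution condition entering essentially as the case $n=1$.

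The base case $n=0$ is immediate, since $\tau_0(\delta,\sigma)=\id_{\uu}$ and $\theta_{\uu}=\id_{\uu}$ by the general fact (Section~\ref{catswithtwist}) that a twist is always the identity on the unit object. For the case $n=1$, I would apply the formula~\eqref{n+1thpower} of Theorem~\ref{powers}, which identifies $(\tau_1(\delta,\sigma))^2$ with precisely the diagram appearing on the right-hand side of the definition of a $\theta$-twisted modular pair in involution (Section~\ref{modpair}). The defining relation then gives $(\tau_1(\delta,\sigma))^2=\theta_H$ at once.

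For the inductive step $n\ge 2$, my approach is to combine Corollary~\ref{remarkeuds} with the $n=1$ case. Corollary~\ref{remarkeuds} expresses $(\tau_n(\delta,\sigma))^{n+1}$ as $(\tau_n(\varepsilon,u))^{n+1}$ sandwiched between the $n$-fold coadjoint coaction applied to $\sigma$ (on top) and the $n$-fold adjoint action applied to $\delta$ (on the bottom), as unfolded in Sections~\ref{adjact} and~\ref{coadjcoact}. I would then decompose $(\tau_n(\varepsilon,u))^{n+1}$ by iterating~\eqref{n+1thpower} for the trivial pair $(\varepsilon,u)$, reducing it to a composition of $n$ copies of $(\tau_1(\varepsilon,u))^2$ interleaved with braidings. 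Using naturality of the braiding and the level-exchange property, the $\sigma/\delta$ decoration distributes onto each such copy, locally turning each $(\tau_1(\varepsilon,u))^2$ into a copy of $(\tau_1(\delta,\sigma))^2$, and hence, by the $n=1$ case, into a copy of $\theta_H$. The braid pattern that remains is exactly the one prescribed by iterating the twist axiom \eqref{twistcondition}, so the whole diagram collapses to $\theta_{H^{\tens n}}$.

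The main obstacle will be the diagrammatic bookkeeping: one has to verify that the $\sigma$'s produced by the iterated coadjoint coaction and the $\delta$'s produced by the iterated adjoint action really do distribute so that each local $S^2$-block inside $(\tau_n(\varepsilon,u))^{n+1}$ receives exactly the pair $(\sigma,\delta)$ at the positions needed to apply the $n=1$ case. An alternative, possibly cleaner, route is a direct induction using~\eqref{n+1thpower} for $(\delta,\sigma)$: replace $(\tau_1(\delta,\sigma))^2$ by $\theta_H$ via the $n=1$ case, and then argue that the remaining composition involving $(\tau_{n-1}(\varepsilon,u))^n$, together with the just-introduced $\theta_H$ and the braidings exposed by \eqref{n+1thpower}, matches precisely the inductive decomposition of $\theta_{H^{\tens n}}$ coming from~\eqref{twistcondition}. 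In either route, the technical heart lies in aligning the $(\delta,\sigma)$-decorations with the $S^2$-blocks.
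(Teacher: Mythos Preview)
Your inductive strategy matches the paper's, and your ``alternative, possibly cleaner, route'' is in fact the route the paper takes. The ingredient you are missing to make that route go through without any bookkeeping is to use Corollary~\ref{remarkeuds} \emph{in reverse}: in the diagram produced by~\eqref{n+1thpower} at level $n$, the $\sigma$- and $\delta$-decorations surrounding the $(\tau_{n-1}(\varepsilon,u))^{n}$ block are exactly the right adjoint action by~$\sigma$ and the left coadjoint coaction paired with~$\delta$, so by Corollary~\ref{remarkeuds} that decorated block equals $(\tau_{n-1}(\delta,\sigma))^{n}$ on the nose. The induction hypothesis then turns it into $\theta_{H^{\otimes n-1}}$, the $n=1$ case turns the remaining $(\tau_1(\delta,\sigma))^2$ into $\theta_H$, and a single application of~\eqref{twistcondition} together with naturality of the braiding yields $\theta_{H^{\otimes n}}$. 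This bypasses entirely the ``diagrammatic bookkeeping'' you worry about in your first route: there is no need to unroll $(\tau_n(\varepsilon,u))^{n+1}$ into $n$ separate $S^2$-blocks and redistribute the decorations one by one.

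A minor caveat on $n=1$: the diagram that~\eqref{n+1thpower} produces is not literally identical to the right-hand side of the $\theta$-twisted modular pair condition. In the paper's argument, after invoking the condition to insert a $\theta_H$ one still needs a few routine simplifications (naturality of the twist, (co)associativity, the fact that $\delta$ and $\sigma$ are (co)algebra morphisms, and the antipode axiom) before the surrounding structure collapses. So ``at once'' is slightly optimistic, though the missing steps are entirely mechanical.
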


\begin{proof}
The equality~$(\tau_n(\delta,\sigma))^{n+1}=\theta_{H^{\tens n}}$ is shown by induction.
For~$n=0$, this follows by definition and the fact that~$\theta_\uu=\id_\uu$. Indeed,~$\tau_0(\delta,\sigma)=\id_\uu=\theta_\uu=\theta_{H^{\tens 0}}$.
For~$n=1$, we have
$$(\tau_1(\delta,\sigma))^{2}\overset{(i)}{=}
\,
\psfrag{b}{\hspace{-0.2cm}$H$}
\psfrag{v}{$\sigma$}
\psfrag{l}{\hspace{-0.05cm}$\delta$}
\rsdraw{0.55}{0.75}{tau123DSnolab}\;\overset{(ii)}{=}
\,
\psfrag{b}{\hspace{-0.2cm}$H$}
\psfrag{v}{$\sigma$}
\psfrag{c}{$\sigma$}
\psfrag{l}{\hspace{-0.05cm}$\delta$}
\rsdraw{0.55}{0.75}{tau123DS1nolab}\;\overset{(iii)}{=}\,
\psfrag{b}{\hspace{-0.2cm}$H$}
\psfrag{v}{$\sigma$}
\psfrag{c}{$\sigma$}
\psfrag{l}{\hspace{-0.07cm}$\delta$}
\rsdraw{0.55}{0.75}{tau123DS2nolab}\;\overset{(iv)}{=}\,
\psfrag{b}{\hspace{-0.2cm}$H$}
\psfrag{v}{$\sigma$}
\psfrag{c}{$\sigma$}
\psfrag{l}{\hspace{-0.07cm}$\delta$}
\rsdraw{0.55}{0.75}{tau123DS3nolab}\;\overset{(v)}{=}\theta_H.$$
Here $(i)$ follows by Formula \eqref{n+1thpower} of Theorem \ref{powers}, $(ii)$ follows by the fact that $(\delta,\sigma)$ is a $\theta$-twisted modular pair in involution for $H$, $(iii)$ follows by the naturality of the twist and the de\-fi\-ni\-tions of left coadjoint coaction and right adjoint action, $(iv)$ follows by (co)associativity and the fact that $\delta$ is an algebra morphism and $\sigma$ is a coalgebra morphism, $(v)$ follows by the antipode axiom and (co)unitality.

Suppose that the statement is true for an $n\ge 1$ and let us show it for $n+1$. We have
$$(\tau_{n+1}(\delta,\sigma))^{n+2}\overset{(i)}{=}\,
\psfrag{z}{\hspace{-0.4cm}$H^{\tens n}$}
\psfrag{c}{\hspace{0.2cm}$H$}
\psfrag{f}{\hspace{-0.55cm}$\left(\tau_{n}(\varepsilon,u)\right)^{n+1}$}
\psfrag{g}{\hspace{-1.1cm}$\left(\tau_1(\delta,\sigma)\right)^{2}$}
\psfrag{l}{\hspace{-0.1cm}$\delta$}
\psfrag{s}{$\sigma$}
\psfrag{d}{\hspace{-0.65cm}$H^{\tens n}$}
\rsdraw{0.55}{0.75}{taunn+1DScopy}\;\overset{(ii)}{=}\,
\psfrag{z}{\hspace{-0.4cm}$H^{\tens n}$}
\psfrag{c}{\hspace{0.2cm}$H$}
\psfrag{f}{\hspace{-0.55cm}$\left(\tau_{n}(\delta,\sigma)\right)^{n+1}$}
\psfrag{g}{\hspace{-1.1cm}$\left(\tau_1(\delta,\sigma)\right)^{2}$}
\psfrag{l}{\hspace{-0.1cm}$\delta$}
\psfrag{s}{$\sigma$}
\psfrag{d}{\hspace{-0.65cm}$H^{\tens n}$}
\rsdraw{0.55}{0.75}{taunn+1DScopy1}\;\overset{(iii)}{=}\,
\psfrag{z}{\hspace{-0.4cm}$H^{\tens n}$}
\psfrag{c}{\hspace{0.2cm}$H$}
\psfrag{f}{\hspace{-0.55cm}$\left(\tau_{n}(\delta,\sigma)\right)^{n+1}$}
\psfrag{g}{\hspace{-1.1cm}$\left(\tau_1(\delta,\sigma)\right)^{2}$}
\psfrag{l}{\hspace{-0.1cm}$\delta$}
\psfrag{s}{$\sigma$}
\psfrag{d}{\hspace{-0.65cm}$H^{\tens n}$}
\rsdraw{0.55}{0.75}{taunn+1DScopy2}\;\overset{(iv)}{=}\theta_{H^{\tens n+1}}.$$
Here $(i)$ follows from Formula \eqref{n+1thpower} of Theorem \ref{powers}, $(ii)$ follows from Corollary \ref{remarkeuds}, $(iii)$ follows from the statement for $n=1$ and the induction hypothesis, $(iv)$ follows by the naturality of the braiding and from the axiom of the twist.
\end{proof}

\begin{rmk}
\normalfont If~$\mc B$ is a symmetric monoidal category endowed with the trivial twist~$\id_{\mc B}$ and~$(\delta,\sigma)$ is a~$\id_{\mc B}$-twisted modular pair for~$H$, then Corollary~\ref{n+1thpower=twist} gives that~$(\tau_n(\delta,\sigma))^{n+1}=\id_{H^{\tens n}}$ for all~$n\in \N$. This was first proved by Khalkhali and Pourkia in~\cite{khalkhalipourkia}.
\end{rmk}

\subsection{Paracocyclic objects associated with modular pairs} \label{recall}
Let~$(\delta,\sigma)$ be a modular pair for~$H$.
Let us recall the paracocyclic object~$\textbf{CM}_\bullet(H,\delta,\sigma)$ in~$\mc B$ from~\cite{khalkhalipourkia} associated to this data.
For any~$n\ge 0$, define~$$\textbf{CM}_n(H,\delta,\sigma)=H^{\tens n}.$$
For any $n\ge 1$, define the cofaces~$\{\delta_i^n(\sigma) \co  H^{\tens n-1} \to H^{\tens n}\}_{0\le i \le n}$ by setting~$\delta_0^1=u$,~$\delta_1^1=\sigma$, and for any $n\ge 2,$
\[\delta_i^n(\sigma)=\begin{cases}
\,
\psfrag{b}{\hspace{-0.1cm}$1$}
\psfrag{e}{\hspace{-0.4cm}$n-1$}
\rsdraw{0.55}{0.75}{CMdelta0}
\;  &\quad \quad \text{if } i=0,\\ \\
\,
\psfrag{b}{$1$}
\psfrag{j}{$i$}
\psfrag{e}{\hspace{-0.4cm}$n-1$}
\rsdraw{0.55}{0.75}{degenAC}\; &\quad \quad \text{if }1\le i \le n-1,\\ \\
\,
\psfrag{b}{$1$}
\psfrag{n}{$n$}
\psfrag{c}{$\sigma$}
\psfrag{e}{\hspace{-0.4cm}$n-1$}
\rsdraw{0.55}{0.75}{CMdeltanDS}
\; &\quad \quad \text{if } i=n. \end{cases}\]
For any $n\ge 0$, define the codegeneracies $\{\sigma_j^n \co  H^{\tens n+1} \to H^{\tens n}\}_{0\le j \le n}$ by
\[\sigma_j^n=\,
\psfrag{b}{$1$}
\psfrag{i}{\hspace{-0.45cm}$j+1$}
\psfrag{e}{\hspace{-0.35cm}$n+1$}
\rsdraw{0.55}{0.75}{faceAC}
\;.\]
For any $n\ge 0$, the paracocyclic operators~$\tau_n(\delta, \sigma)\co H^{\tens n} \to H^{\tens n}$ of~$\textbf{CM}_\bullet(H,\delta,\sigma)$ are those defined in Section~\ref{maincomputation}.

Theorem \ref{powers} is useful to prove that $\textbf{CM}_\bullet(H,\delta,\sigma)$ is a paracocyclic object in~$\mc B$. We prove this in \nameref{appendix}. In particular, we prove that for all $n\in \N$, $$\tau_n(\delta, \sigma)\sigma_0^n=\sigma_n^n(\tau_{n+1}(\delta, \sigma))^2.$$ The next corollary derives (co)cyclic $\kk$-modules from $\textbf{CM}_\bullet(H,\delta,\sigma)$. It follows directly from Lemma \ref{twistinginducescyclic} and Corollary \ref{n+1thpower=twist}.
\begin{cor}
If~$\mc B$ has a twist $\theta$ and~$(\delta, \sigma)$ a $\theta$-twisted mo\-du\-lar pair in invo\-lu\-tion for $H$, then
\begin{itemize}
\item[(a)] $\{\Hom_{\mc B}(\uu,\textbf{\emph{CM}}_n(H,\delta,\sigma))\}_{n\in \N}$ is a cocyclic~$\kk$-module,
\item[(b)] $\{\Hom_{\mc B}(\textbf{\emph{CM}}_n(H,\delta,\sigma),\uu)\}_{n\in \N}$ is a cyclic~$\kk$-module.
\end{itemize}
\end{cor}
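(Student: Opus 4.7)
The plan is to combine the two ingredients already at our disposal, namely Lemma~\ref{twistinginducescyclic} and Corollary~\ref{n+1thpower=twist}, applied to the paracocyclic object $\textbf{CM}_\bullet(H,\delta,\sigma)$ constructed in Section~\ref{recall}. There is essentially no new computation to carry out: the work has been front-loaded into Theorem~\ref{powers} and its corollaries.

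First, I would recall (using the fact, to be proved in the appendix, that $\textbf{CM}_\bullet(H,\delta,\sigma)$ is indeed a paracocyclic object in $\mc B$) that the family $\{\textbf{CM}_n(H,\delta,\sigma)\}_{n\in\N}=\{H^{\tens n}\}_{n\in\N}$ equipped with the cofaces $\delta_i^n(\sigma)$, codegeneracies $\sigma_j^n$, and paracocyclic operators $\tau_n(\delta,\sigma)$ defined in Section~\ref{recall} satisfies the relations \eqref{sr} and \eqref{pcr}. This places us in the setting of Lemma~\ref{twistinginducescyclic}, once we know that the $(n+1)$-th power of each paracocyclic operator equals the twist $\theta_{H^{\tens n}}$.

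Next, I would invoke Corollary~\ref{n+1thpower=twist}: since $(\delta,\sigma)$ is assumed to be a $\theta$-twisted modular pair in involution for $H$, we have $(\tau_n(\delta,\sigma))^{n+1}=\theta_{H^{\tens n}}$ for every $n\in\N$. This is precisely the hypothesis of Lemma~\ref{twistinginducescyclic} applied to the paracocyclic object $Y_\bullet=\textbf{CM}_\bullet(H,\delta,\sigma)$ in $\mc B$.

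Finally, part~(a) of Lemma~\ref{twistinginducescyclic} immediately yields that $\{\Hom_{\mc B}(\uu,\textbf{CM}_n(H,\delta,\sigma))\}_{n\in\N}$ is a cocyclic $\kk$-module, proving~(a); part~(b) of the same lemma yields that $\{\Hom_{\mc B}(\textbf{CM}_n(H,\delta,\sigma),\uu)\}_{n\in\N}$ is a cyclic $\kk$-module, proving~(b). There is no real obstacle here: the only conceivable subtlety is the verification that the composition of a paracocyclic object with a covariant (respectively, contravariant) $\Hom$-functor is paracocyclic (respectively, paracyclic), but this is built into the functoriality argument already used inside the proof of Lemma~\ref{twistinginducescyclic}, and the reduction from \eqref{pcr} to \eqref{cc} is handled there by the naturality of $\theta$ together with $\theta_\uu=\id_\uu$.
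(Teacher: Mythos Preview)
Your proposal is correct and matches the paper's own argument exactly: the paper states that the corollary ``follows directly from Lemma~\ref{twistinginducescyclic} and Corollary~\ref{n+1thpower=twist},'' which is precisely the combination you describe. Your write-up is simply a more explicit unpacking of that one-line proof.
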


\section{Categorical Connes-Moscovici trace} \label{res2}

In this section,~$\mc B$ is a braided category with a twist~$\theta$ and~$H$ is a Hopf algebra in~$\mc B$. We  in\-tro\-duce traces (à la Connes-Moscovici) be\-tween pa\-ra\-co\-cy\-clic ob\-jects as\-so\-ci\-a\-ted with~$H$ (as in Section~\ref{recall}) and pa\-ra\-co\-cy\-clic ob\-jects as\-so\-ci\-a\-ted with an~$H$-module co\-al\-ge\-bra.
We provide an explicit example of such traces using coends.

\subsection{Paracocyclic objects associated with coalgebras} \label{alasolotar}
Let $C$ be a coalgebra in $\mc B$.
We associate with $C$ a paracocyclic object $\textbf{C}_\bullet(C)$ in $\mc B$.
It is inspired by the construction of Akrami and Majid from \cite{cycliccocycles}. When $\mc B=\Mod_\kk$ is the category of $\kk$-modules, one recovers the cocyclic $\kk$-module implicitly defined in the work of Farinati and Solotar~\cite{farinatisolotar}.
When~$\mc B$ is a symmetric monoidal category endowed with the trivial twist, then the underlying cosimplicial object of~$\textbf{C}_{\bullet}(C)$ is equal to the one considered in~\cite[Definition 2.2]{symmcohoch}.

For~$n\ge 0$, set~$\textbf{C}_n(C)= C^{\tens n+1}$.
For $n\ge 1$, define the cofaces $\{\delta_i^n \co  C^{\tens n} \to  C^{\tens n+1}\}_{0\le i \le n}$ by
\[\delta_i^n=\begin{cases}\,
\psfrag{b}{$1$}
\psfrag{j}{\hspace{-0.3cm}$i+1$}
\psfrag{e}{$n$}
\rsdraw{0.55}{0.75}{degenAC} \; & \text{ if } 0 \le i \le n-1, \\ \\
\,
\psfrag{b}{$1$}
\psfrag{j}{$2$}
\psfrag{e}{$n$}
\rsdraw{0.55}{0.75}{degennFASO} \;& \text{ if } i = n.
 \end{cases}
\]
For $n\ge 0$, define the codegeneracies $\{\sigma_j^n \co   C^{\tens n+2} \to C^{\tens n+1}\}_{0\le j \le n}$ by
$$\sigma_j^n=\,
\psfrag{b}{$0$}
\psfrag{i}{\hspace{-0.45cm}$j+1$}
\psfrag{e}{\hspace{-0.4cm}$n+1$}
\rsdraw{0.55}{0.75}{faceAC}\;.$$
For $n\ge 0$, define the paracocyclic operators $\tau_n\co C^{\tens n+1} \to C^{\tens n+1}$ by
$$\tau_0=\, \psfrag{X}{$C$} \rsdraw{0.55}{0.75}{cycop0}\;  \quad \text{and} \quad
\tau_n=
\, \psfrag{b}{$0$}
\psfrag{bn}{\hspace{0.15cm}$1$}
\psfrag{pe}[l]{\hspace{-0.2cm}$n-1$}
\psfrag{e}{$n$}
\rsdraw{0.55}{0.75}{cycopneginvAC}\; \quad \text{if}\quad n\geq 1.
$$
It follows directly from the definition of a twist (see Section \ref{catswithtwist}) that the paracocyclic operator for $\textbf{C}_\bullet(C)$ satisfies the relation $\tau_n^{n+1}=\theta_{C^{\tens n+1}}$ for all $n\in \N$.

\subsection{Traces}
Let~$C$ be an \emph{$H$-module coalgebra} in~$\mc B$, that is, a coalgebra in the category of right~$H$-modules in~$\mc B$. In other words,~$C$ is a coalgebra in~$\mc B$ endowed with a right action~$r\co C\tens H \to C$ of~$H$ on~$C$ such that the comultiplication~$\Delta_C$ and the counit~$\varepsilon_C$ of~$C$ are both $H$-linear, that is, morphisms of right~$H$-modules.
By depicting the right action by $$r=\,
\psfrag{u}{\hspace{-0.2cm}$H$}
\psfrag{A}{\hspace{-0.15cm}${\color{red} C}$}
\rsdraw{0.45}{0.75}{raction}\;,$$ the $H$-linearity of $\Delta_C$ and $\varepsilon_C$ depicts as
$$
\,
\psfrag{u}{\hspace{-0.2cm}$H$}
\psfrag{A}{\hspace{-0.15cm}$C$}
\rsdraw{0.45}{0.75}{hopfmodcoalg1nolab}\;=\,
\psfrag{u}{\hspace{-0.2cm}$H$}
\psfrag{A}{\hspace{-0.15cm}$C$}
\rsdraw{0.45}{0.75}{hopfmodcoalg2nolab}\; \qquad \text{and} \qquad \,
\psfrag{u}{\hspace{-0.2cm}$H$}
\psfrag{A}{\hspace{-0.15cm}$C$}
\rsdraw{0.45}{0.75}{hopfmodcoalg3nolab}\;=\,
\psfrag{u}{\hspace{-0.2cm}$H$}
\psfrag{A}{\hspace{-0.15cm}$C$}
\rsdraw{0.45}{0.75}{hopfmodcoalg4nolab}\;.
$$
In this pictures, the red strands are colored by $C$ and the black ones by $H$.

Let $\delta \co H\to \uu$ be an algebra morphism and let $\sigma \co \uu \to H$ be a coalgebra morphism.
A~$\delta$-\emph{invariant}~$\sigma$-\emph{trace} for $C$ is a morphism~$\alpha\co \uu \to C$ in~$\mc B$ satisfying
$$
\,
\psfrag{g}{$\color{red} \alpha$}
\psfrag{u}{\hspace{-0.1cm}$H$}
\psfrag{A}{\hspace{-0.05cm}$\color{red} C$}
\rsdraw{0.45}{0.75}{deltatrace}\;=\,
\psfrag{u}{\hspace{-0.1cm}$H$}
\psfrag{A}{\hspace{-0.05cm}$\color{red}C$}
\psfrag{c}{$\color{red} \alpha$}
\psfrag{l}{\hspace{-0.07cm}$\delta$}
\rsdraw{0.45}{0.75}{deltatrace1}\; \qquad \text{and} \qquad \,
\psfrag{g}{$\color{red} \alpha$}
\psfrag{A}{\hspace{-0.05cm}$\color{red}C$}
\psfrag{s}{$\sigma$}
\rsdraw{0.45}{0.75}{sigmatrace}\;=\,
\psfrag{g}{$\color{red} \alpha$}
\psfrag{A}{\hspace{-0.05cm}$\color{red}C$}
\psfrag{s}{$\sigma$}
\rsdraw{0.45}{0.75}{sigmatrace1}\;.
$$
Given such a morphism, define for any~$n\in \N$ the morphism~$\alpha_n \co H^{\tens n} \to C^{\tens n+1}$ in $\mc B$ by setting
$$\alpha_0=\alpha \quad \text{and} \quad \alpha_n= \,
\psfrag{g}{$\color{red}\alpha$}
\psfrag{A}{\hspace{-0.15cm}$C$}
\psfrag{b}{$1$}
\psfrag{e}{$n$}
\rsdraw{0.55}{0.75}{CMtrace}\; \quad \text{for} \quad n\ge 1.$$

Consider the paracocyclic object $\textbf{CM}_\bullet(H,\delta,\sigma)$ in $\mc B$ (see Section \ref{recall}) and the paracocyclic object $\textbf{C}_\bullet(C)$ in $\mc B$ associated to the coalgebra $C$ in $\mc B$ (see Section \ref{alasolotar}).
\begin{thm}\label{CMtrace} Let~$(\delta, \sigma)$ be a modular pair for~$H$ and~$\alpha$ be a~$\delta$-invariant~$\sigma$-trace for~$C$. Then the family~$\{\alpha_n \co H^{\tens n} \to C^{\tens n+1}\}_{n\in \N}$ is a morphism of paracocyclic objects from~$\emph{\textbf{CM}}_\bullet(H,\delta,\sigma)$ to~$\emph{\textbf{C}}_\bullet(C)$.
\end{thm}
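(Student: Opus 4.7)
The proof amounts to showing that the family $\{\alpha_n \co H^{\tens n} \to C^{\tens n+1}\}_{n\in \N}$ is a natural transformation, which, since the paracyclic category is generated by cofaces, codegeneracies, and paracocyclic operators modulo the relations \eqref{sr} and \eqref{pcr}, reduces to verifying, for every $n$, three kinds of identities: commutation with cofaces, with codegeneracies, and with the paracocyclic operator.

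For the cofaces, the identity $\alpha_n\,\delta_i^n(\sigma)=\delta_i^n\,\alpha_{n-1}$ is proved by splitting into the three cases appearing in the piecewise definition of $\delta_i^n(\sigma)$. The case $i=0$ (insertion of the unit $u$) reduces to the unitality of the $H$-action on $C$, so that the freshly inserted $u$ acts as identity on the corresponding branch of $\Delta_C^{(n)}(\alpha)$. The case $1\le i\le n-1$ (comultiplication of the $i$-th tensorand of $H$) is handled by combining the $H$-linearity of $\Delta_C$ (which allows pushing the action of the split $H$-factor through a comultiplication) with the coassociativity of $\Delta_C$. The case $i=n$ (insertion of $\sigma$ at the end on the $\textbf{CM}$-side) is where the second axiom in the definition of a $\delta$-invariant $\sigma$-trace enters, and it is precisely what produces the extra cyclic-like tensorand in $\delta_n^n$ on the $\textbf{C}_\bullet(C)$-side.

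For the codegeneracies, the identity $\alpha_n\,\sigma_j^n=\sigma_j^n\,\alpha_{n+1}$ follows from the $H$-linearity of the counit $\varepsilon_C$, which turns the action of an $H$-factor into multiplication by $\varepsilon(h)$, combined with the counitality of $\Delta_C$ to delete the corresponding branch of the iterated comultiplication of $\alpha$. Naturality of the braiding is used to align the resulting strands.

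The main obstacle is the compatibility with the paracocyclic operator, namely the identity $\tau_n\,\alpha_n = \alpha_n\,\tau_n(\delta,\sigma)$. The plan is to expand both sides using the graphical definitions from Sections~\ref{maincomputation} and~\ref{alasolotar}, and then transform the right-hand side into the left-hand side by the following ordered steps: (i) use the $H$-linearity of $\Delta_C$ to pull all $H$-actions arising from $\tau_n(\delta,\sigma)$ past the iterated comultiplication of $\alpha$ appearing in $\alpha_n$; (ii) apply the antipode axiom of $H$ to cancel the $H$-strand that $\tau_n(\delta,\sigma)$ splits off, acts with, and re-pairs against its antipode; (iii) invoke the $\delta$-invariance and $\sigma$-invariance of $\alpha$ to absorb the residual $\delta$- and $\sigma$-decorations onto $\alpha$; (iv) rearrange the remaining strands using the level-exchange property, the naturality of the braiding, and the twist axiom \eqref{twistcondition}, so as to recognize exactly the cyclic shift with the twist $\theta$ on the last $C$-strand that defines $\tau_n$ on $\textbf{C}_\bullet(C)$. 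This is the braided analogue of the classical Connes--Moscovici trace identity and the chief technical content of the theorem; the graphical bookkeeping closely parallels the manipulations already carried out in the proof of Corollary~\ref{n+1thpower=twist}, transplanted from $H^{\tens n}$ onto $C^{\tens n+1}$ through the $H$-linear structure on $C$.
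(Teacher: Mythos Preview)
Your outline matches the paper's approach in its overall decomposition into cofaces, codegeneracies, and paracocyclic operators, and your treatment of the codegeneracies and of the cofaces for $0\le i\le n-1$ is exactly what the paper does. Two points deserve comment.

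First, for the coface $i=n$ you propose a direct verification using the $\sigma$-trace axiom. The paper instead takes a shortcut: it postpones this case, first proves the compatibility \eqref{commcycCM} with the paracocyclic operator, and then obtains $\alpha_n\delta_n^n(\sigma)=\delta_n^n\alpha_{n-1}$ formally from the already-established $i=0$ case together with the relation $\tau_n\delta_0^n=\delta_n^n$ valid in both paracocyclic objects. Your direct route should also work, but it duplicates the use of the $\sigma$-trace condition and forces you to match up the twist and the long braiding in $\delta_n^n$ on the $\textbf{C}_\bullet(C)$ side by hand; the paper's reduction avoids this bookkeeping entirely.

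Second, for the paracocyclic operator your step~(i) (``pull all $H$-actions past the iterated comultiplication of $\alpha$'') is the heart of the argument for $n\ge 2$, and the paper isolates it as a separate inductive lemma (Lemma~\ref{kindaflow}): the left diagonal $H$-action on $H^{\tens n}$, followed by $\alpha_n$, equals $\alpha_n$ followed by a single $H$-action on the leftmost $C$-output. This is not an immediate consequence of $H$-linearity of $\Delta_C$ alone; it requires an induction combining the module axiom with that $H$-linearity. Once this lemma is in place, the remaining steps are exactly your (ii)--(iv), except that the twist \emph{axiom}~\eqref{twistcondition} is not actually invoked: only naturality of $\theta$ (for $n=0$) and the appearance of $\theta$ in the $\sigma$-trace condition and in the definition of $\tau_n$ on $\textbf{C}_\bullet(C)$ are used. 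Your closing analogy with Corollary~\ref{n+1thpower=twist} is a bit off the mark; the manipulations there concern iterated powers of $\tau_n(\delta,\sigma)$ on $H^{\tens n}$ and do not transfer to the trace computation.
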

We prove Theorem \ref{CMtrace} in Section \ref{pfCMtrace}. The next corollary relates the cyclic (co)homologies associated with $\textbf{CM}_\bullet(H,\delta,\sigma)$ and $\textbf{C}_\bullet(C)$.

\begin{cor} Assume that~$\mc B$ is~$\kk$-linear and has a twist $\theta$. Let~$(\delta, \sigma)$ be a $\theta$-twisted mo\-du\-lar pair in invo\-lu\-tion for $H$ and $\alpha$ be a $\delta$-invariant $\sigma$-invariant trace for $C$. Then
\begin{itemize}
  \item[(a)] \label{traceandco} The family $\{\Hom_{\mc B}(\uu,\alpha_n)\}_{n\in \N}$ induces a morphism in cyclic cohomology $$\alpha^* \co HC^*\left(\Hom_{\mc B}(\uu,\textbf{\emph{CM}}_\bullet(H,\delta,\sigma))\right) \to HC^*\left(\Hom_{\mc B}(\uu,\textbf{\emph{C}}_\bullet(C))\right).$$
  \item[(b)] \label{traceandco1} The family $\{\Hom_{\mc B}(\alpha_n,\uu)\}_{n\in \N}$ induces a morphism in cyclic homology $$\alpha_* \co HC_*\left(\Hom_{\mc B}(\textbf{\emph{C}}_\bullet(C),\uu)\right) \to HC_*\left(\Hom_{\mc B}(\textbf{\emph{CM}}_\bullet(H,\delta,\sigma),\uu)\right).$$
\end{itemize}
\end{cor}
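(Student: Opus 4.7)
The plan is to assemble three results already established in the paper: Theorem~\ref{CMtrace} (which gives a morphism of paracocyclic objects), Corollary~\ref{n+1thpower=twist} together with the twist identity recalled in Section~\ref{alasolotar} (which upgrade both sides to cocyclic/cyclic data after a Hom-functor), and Lemma~\ref{twistinginducescyclic} (which performs that upgrade). Cyclic (co)homology is then functorial on morphisms of (co)cyclic $\kk$-modules, and this is what delivers $\alpha^*$ and $\alpha_*$.

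First, since $(\delta,\sigma)$ is a modular pair for $H$ and $\alpha$ is a $\delta$-invariant $\sigma$-trace for $C$, Theorem~\ref{CMtrace} applies and gives that $\{\alpha_n \co H^{\tens n} \to C^{\tens n+1}\}_{n\in\N}$ is a morphism of paracocyclic objects in $\mc B$ from $\textbf{CM}_\bullet(H,\delta,\sigma)$ to $\textbf{C}_\bullet(C)$. Composing this natural transformation with the $\kk$-linear functor $\Hom_{\mc B}(\uu,-)\co\mc B\to\Mod_\kk$ (respectively with $\Hom_{\mc B}(-,\uu)\co\mc B^\opp\to\Mod_\kk$) produces families of $\kk$-linear maps $\Hom_{\mc B}(\uu,\alpha_n)$ (respectively $\Hom_{\mc B}(\alpha_n,\uu)$) that commute with all cofaces, codegeneracies, and paracocyclic operators; in other words, morphisms of paracocyclic (respectively paracyclic) $\kk$-modules.

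Second, I would verify that on each side these paracocyclic (resp.\ paracyclic) $\kk$-modules are in fact cocyclic (resp.\ cyclic). Since $(\delta,\sigma)$ is a $\theta$-twisted modular pair in involution for $H$, Corollary~\ref{n+1thpower=twist} gives $(\tau_n(\delta,\sigma))^{n+1}=\theta_{H^{\tens n}}$ for all $n\in\N$; and the remark made at the end of Section~\ref{alasolotar} gives the analogous identity $\tau_n^{n+1}=\theta_{C^{\tens n+1}}$ for $\textbf{C}_\bullet(C)$. Applying Lemma~\ref{twistinginducescyclic} to each of $\textbf{CM}_\bullet(H,\delta,\sigma)$ and $\textbf{C}_\bullet(C)$, with the functors $\Hom_{\mc B}(\uu,-)$ and $\Hom_{\mc B}(-,\uu)$, converts these paracocyclic objects in $\mc B$ into cocyclic and cyclic $\kk$-modules respectively. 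Since the (co)cyclicity condition $\tau^{n+1}=\id$ now holds on both source and target, the already-constructed morphism of para(co)cyclic $\kk$-modules is automatically a morphism of (co)cyclic $\kk$-modules.

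Finally, I invoke the fact recalled at the end of Section~\ref{simplicialprelimi}: a morphism of cocyclic (respectively cyclic) $\kk$-modules induces a levelwise $\kk$-linear morphism between the associated cyclic cohomology (respectively homology) groups. This produces $\alpha^*$ in part~(a) and $\alpha_*$ in part~(b). There is no genuine obstacle; the argument is a bookkeeping chain in which the only nontrivial input is Theorem~\ref{CMtrace}, whose proof is deferred to Section~\ref{pfCMtrace}. The step that most deserves attention is the passage from ``morphism of paracocyclic objects'' to ``morphism of (co)cyclic $\kk$-modules,'' and its justification reduces precisely to checking that Lemma~\ref{twistinginducescyclic} applies simultaneously to both endpoints via Corollary~\ref{n+1thpower=twist} and the twist identity for $\textbf{C}_\bullet(C)$.
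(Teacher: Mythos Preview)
Your proposal is correct and follows essentially the same route as the paper: apply Theorem~\ref{CMtrace} to get a morphism of paracocyclic objects, use Corollary~\ref{n+1thpower=twist} and the twist identity for $\textbf{C}_\bullet(C)$ together with Lemma~\ref{twistinginducescyclic} to see that both Hom-images are (co)cyclic, and then invoke functoriality of cyclic (co)homology. The paper's own proof is slightly terser but uses exactly the same ingredients in the same order.
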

\begin{proof}
Since~$(\textbf{C}_\bullet(C)(\tau_{n}))^{n+1}= \theta_{C^{\tens n+1}}$, Lemma~\ref{twistinginducescyclic} implies that~$\Hom_{\mc B}(\uu,\textbf{C}_\bullet(C))$ is a cocyclic~$\kk$-module and that~$\Hom_{\mc B}(\textbf{C}_\bullet(C),\uu)$ is a cyclic~$\kk$-module.
Next, by an application of Theorem~\ref{CMtrace}, the family~$\{\Hom_{\mc B}(\uu,\alpha_n)\}_{n\in \N}$ is a natural transformation between cocyclic~$\kk$-modules~$\Hom_{\mc B}(\uu,\textbf{CM}_\bullet(H,\delta,\sigma))$ and~$\Hom_{\mc B}(\uu,\textbf{C}_\bullet(C))$.
This means that for any~$n\in \N$, there is a morphism
\begin{align*}
  \alpha^n \co HC^n\left(\Hom_{\mc B}(\uu,\textbf{CM}_\bullet(H,\delta,\sigma))\right) &\to HC^n\left(\Hom_{\mc B}(\uu,\textbf{C}_\bullet(C))\right)\\
  \left[f\right]&\mapsto \left[\alpha_nf\right],
\end{align*}
where~$\left[f\right]$ is a representative class of an~$n$-th cyclic cocycle.
This finishes the proof of the part~$(a)$.
The proof of~$(b)$ is similar.
\end{proof}

\subsection{Traces from coends} \label{tracesdrinfeld}
Let~$\mc B$ be a rib\-bon ca\-te\-go\-ry with a coend~$(H,i)$, see Sec\-tion~\ref{CoendRibbon}. By~\cite[Chapter 6]{moncatstft}, the ob\-ject~$H$ is a Hopf al\-ge\-bra in~$\mc B$ which is involutive (that is~$S^2=\theta_H$, where~$\theta$ is the twist of~$\mc B$, see Section~\ref{ribbon}). By Section~\ref{modpair}, since~$H$ is involutive, the pair~$(\varepsilon, u)$ is a~$\theta$-twisted mo\-du\-lar pair in in\-vo\-lu\-tion for~$H$. Since~$\Mod_H$ is braided isomorphic to the center of~$\mc B$ (see \cite[Section 6.5.3]{moncatstft} for details), we obtain that~$\Mod_H$ is ribbon. The braiding of~$\Mod_H$ is given by $$
\tau_{(M,r), (N,s)}= \,
    \psfrag{M}{\hspace{-0.1cm}$M$}
    \psfrag{N}{$N$}
    \psfrag{H}{$H$}
    \rsdraw{0.55}{0.75}{braidmodH}
  \;.
$$
Here, the coaction denoted with a black dot is the universal coaction of~$H$ (see Section~\ref{CoendRibbon}).
The dual of $(M,r) \in \Ob{\Mod_H}$ is given by $(M^*, r^\dag)$, where
$$r^\dag=\,
    \psfrag{M}{$M$}
    \psfrag{f}{$r$}
    \psfrag{H}{$H$}
    \rsdraw{0.55}{0.75}{actdual}
  \; = \,
    \psfrag{M}{$M$}
    \psfrag{f}{$r$}
    \psfrag{H}{$H$}
    \rsdraw{0.55}{0.75}{actdualright}
  \; \co M^* \tens H \to M^*,$$ together with the (co)evaluation morphisms inherited from $\mc B$:
 $$\evl_{(M,r)}=\evl_M, \quad \evr_{(M,r)}=\evr_M, \quad \coevl_{(M,r)}=\coevl_M, \quad \coevr_{(M,r)}=\coevr_M.$$
Note that the last equality in the definition of $r^\dag$ follows from the involutivity of $H$.

The ca\-te\-go\-ry~$\Mod_H$ has a coend~$((C,a),j)$, where~$C=H^*\tens H$, the ac\-tion~$a\co C \tens H \to C$ of~$H$ on~$C$ is computed by $$a=\,
\psfrag{H}{$H$}
\rsdraw{0.45}{0.75}{actionHonC}\;,$$
and the universal dinatural transformation $j=\{j_{(M,r)} \co (M,r)^*\tens (M,r) \to (C,a)\}_{(M,r)\in \Mod_H}$ is given by
$$j_{(M,r)}=\,
\psfrag{H}{$H$}
\psfrag{M}{$M$}
\rsdraw{0.45}{0.75}{jmr}\;.$$
By Section \ref{CoendRibbon}, the coend $C$ is a Hopf algebra in $\Mod_H$. In particular, it is a coalgebra in $\Mod_H$.
The comultiplication~$\Delta_C\co C \to C\tens C$ and the counit~$\varepsilon_C\co C\to \uu$ of~$C$ are computed by $$\Delta_C=
\,
\psfrag{H}{$H$}
\rsdraw{0.45}{0.75}{comultcoendZC}\; \quad \text{and} \quad \varepsilon_C=\,
\psfrag{H}{$H$}
\rsdraw{0.45}{0.75}{counitcoendZC}\;.
$$

The following lemma gives a way to produce an~$\varepsilon$-invariant~$u$-trace~$\uu \to C$, where~$\varepsilon$ and~$u$ denote the counit and unit of~$H$.
\begin{lem} \label{centersandtraces} If a morphism~$\kappa \co \uu \to H$ in $\mc B$ satisfies \begin{equation*}
\,
\psfrag{g}{$\kappa$}
\psfrag{A}{\hspace{-0.15cm}$H$}
\psfrag{s}{$\sigma$}
\rsdraw{0.45}{0.75}{sigmatraceblack}\;=\,
\psfrag{g}{$\kappa$}
\psfrag{A}{\hspace{-0.15cm}$H$}
\psfrag{s}{$\sigma$}
\rsdraw{0.45}{0.75}{sigmatrace2}\;,\end{equation*}
then the morphism $$\alpha=
\,
\psfrag{C}{$H$}
\psfrag{f}{$\kappa$}
\rsdraw{0.45}{0.75}{alphacoend}
 \;
 \co \uu \to C$$ is an $\varepsilon$-invariant $u$-trace.
\end{lem}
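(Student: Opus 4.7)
The plan is to verify directly the two defining conditions of a $\delta$-invariant $\sigma$-trace in the special case $\delta=\varepsilon$ and $\sigma=u$, applied to the morphism $\alpha\co\uu\to C=H^{*}\tens H$. Both verifications consist in expanding $\alpha$ and the $H$-module coalgebra structure of $C$ (namely the action $a$ from Section~\ref{tracesdrinfeld} and the comultiplication $\Delta_C$ depicted in \texttt{comultcoendZC}) into their explicit graphical forms, and then simplifying using the axioms of a Hopf algebra in $\mc B$ together with the duality morphisms of $\mc B$.

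First, I would establish the $\varepsilon$-invariance. Substituting $\alpha$ into $a\circ(\alpha\tens\id_H)$ and unfolding the picture \texttt{actionHonC}, the incoming $H$-strand splits through $\Delta$ and meets an $S$-labelled strand produced by the adjoint-type components of $a$. Using the antipode axiom together with a zig-zag identity for the duality $(\evl_H,\coevl_H)$ or $(\evr_H,\coevr_H)$ hidden in the definition of $\alpha$, the incoming $H$-strand collapses into a counit, producing $\alpha\circ\varepsilon$. Crucially, this step makes no use of the hypothesis on $\kappa$: it is a structural feature of morphisms built from the universal dinatural transformation of the coend.

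Second, I would verify the $u$-trace condition. Here I plug $\alpha$ into $\tau_{C,C}\Delta_C\alpha$ and expand $\Delta_C$ using \texttt{comultcoendZC}. The resulting diagram contains $\kappa$ multiplied against another $H$-strand produced by $\Delta_C$, sitting between a pair of braidings coming from $\tau_{C,C}$. The hypothesis on $\kappa$ is precisely the centrality identity that allows me to pull $\kappa$ across this multiplication and braiding. After the move, the remaining diagram simplifies by coassociativity, the antipode axiom, (co)unitality, and the duality zig-zag, and is recognised as the right-hand side of the $u$-trace identity (with $\sigma=u$ reducing to the unit of $H$).

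The main obstacle will be bookkeeping: $C=H^{*}\tens H$ carries two strands of different types, and both $a$ and $\Delta_C$ intertwine them with $\Delta$, $m$, $S$ and the braiding, so each side of each identity produces a visually dense diagram. Once the pictures are drawn and isotoped into a normal form, the substantive algebraic content of the proof is reduced to (i) the antipode axiom, for the $\varepsilon$-invariance, and (ii) the stated centrality identity for $\kappa$, for the $u$-trace condition.
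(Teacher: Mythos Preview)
Your plan is sound and will work, but the paper organizes the argument differently, and the difference is worth noting. Rather than attacking $a(\alpha\tens\id_H)$ and $\Delta_C\alpha$ head-on, the paper observes that $\alpha$ factors as $\alpha=f\kappa$ with $f=\varepsilon^{*}\tens\id_H\co H\to C$, and first establishes two structural facts about $f$ alone: (i) $f$ is a morphism of right $H$-modules from $(H,\varepsilon)$ to $(C,a)$, and (ii) $f$ is a coalgebra morphism $(H,\Delta,\varepsilon)\to(C,\Delta_C,\varepsilon_C)$. Once these are in hand, the $\varepsilon$-invariance of $\alpha$ is a one-line consequence of~(i), and the $u$-trace condition reduces via~(ii) to the hypothesis on $\kappa$, with a single intermediate step using naturality of the twist and braiding. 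Your direct computation essentially proves~(i) and~(ii) inline, entangled with $\kappa$; the paper's factorization cleanly separates the structural content (properties of $f$, independent of $\kappa$) from the one place where the hypothesis on $\kappa$ is actually invoked.

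One detail to watch: in the $u$-trace verification the paper explicitly uses naturality of the \emph{twist} $\theta$ as well as the braiding (this is how the two sides of the hypothesis on $\kappa$ are matched up after pushing through $f$). Your sketch mentions only ``braidings coming from $\tau_{C,C}$''; when you carry out the details, make sure the twist is accounted for.
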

\begin{proof}
Denote~$f=\varepsilon^*\tens \id_H \co H \to C$. Let us first check that~$f$ is a morphism between right~$H$-modules~$(H,\varepsilon)$ and~$(C,a)$. Indeed,
\begin{align*}
  \,
\psfrag{A}{$\color{red}C$}
\psfrag{u}{\hspace{-0.1cm}$H$}
\psfrag{a}{$a$}
\psfrag{g}{$f$}
\rsdraw{0.35}{0.75}{fmodmap10}\;\overset{(i)}{=} & \,
\psfrag{H}{$H$}
\rsdraw{0.45}{0.75}{fmodmap5}\;\overset{(ii)}{=} \,
\psfrag{H}{$H$}
\rsdraw{0.45}{0.75}{fmodmap6}\;\overset{(iii)}{=} \,
\psfrag{H}{$H$}
\rsdraw{0.45}{0.75}{fmodmap7}\;\overset{(iv)}{=}  \\
  \overset{(iv)}{=}  & \,
\psfrag{H}{$H$}
\rsdraw{0.45}{0.75}{fmodmap8}\; \overset{(v)}{=} \,
\psfrag{H}{$H$}
\rsdraw{0.45}{0.75}{fmodmap9}\; \overset{(vi)}{=} \,
\psfrag{A}{$\color{red} C$}
\psfrag{u}{\hspace{-0.1cm}$H$}
\psfrag{a}{$a$}
\psfrag{g}{$f$}
\rsdraw{0.45}{0.75}{fmodmap11}\;.
\end{align*}
Here~$(i)$ follows by definition of~$f$ and~$a$,~$(ii)$ follows from the isotopy of the graphical calculus for pivotal categories and the multiplicativity of the counit,~$(iii)$ by the naturality of the braiding, the fact that~$\varepsilon S=\varepsilon$, and the counitality,~$(iv)$ since universal coaction of~$H$ on itself is the right coadjoint coaction,~$(v)$ by the naturality of the braiding, the (co)unitality and the antipode axiom,~$(vi)$ follows by definition of~$f$.

Next, $f \co H\to C$ is a coalgebra morphism. Indeed, we have:
\begin{align*}
  \,
\psfrag{H}{$H$}
\psfrag{A}{$\color{red} C$}
\psfrag{g}{$f$}
\rsdraw{0.45}{0.75}{fcoalgmap8}\;&\overset{(i)}{=} \,
\psfrag{H}{$H$}
\rsdraw{0.45}{0.75}{fcoalgmap1}\;\overset{(ii)}{=}\,
\psfrag{H}{$H$}
\rsdraw{0.45}{0.75}{fcoalgmap2}\;\overset{(iii)}{=} \\
   &\overset{(iii)}{=} \hspace{0.5cm}\,
\psfrag{H}{$H$}
\rsdraw{0.45}{0.75}{fcoalgmap4}\; \hspace{0.7cm}\overset{(iv)}{=}\hspace{0.5cm}\,
\psfrag{H}{$H$}
\rsdraw{0.45}{0.75}{fcoalgmap6}\;\hspace{0.35cm}\overset{(v)}{=}\,
\psfrag{H}{$H$}
\psfrag{A}{$\color{red} C$}
\psfrag{g}{$f$}
\rsdraw{0.45}{0.75}{fcoalgmap7}\;.
\end{align*}
Here $(i)$ and $(v)$ follow by definition of $f$, $(ii)$ from the isotopy of the graphical
calculus for pivotal categories and the multiplicativity of the counit, $(iii)$ by the naturality of the braiding and the fact that universal coaction of $H$ on itself is the right coadjoint coaction, $(iv)$ follows by the naturality of the braiding, the (co)unitality, and the antipode axiom. Also,
$$\,
\psfrag{H}{$H$}
\psfrag{A}{$\color{red}C$}
\psfrag{g}{$f$}
\rsdraw{0.45}{0.75}{fcoalgmap10}\;\overset{(i)}{=}\,
\psfrag{H}{$H$}
\psfrag{A}{$C$}
\psfrag{g}{$f$}
\rsdraw{0.45}{0.75}{fcoalgmap11}\;\overset{(ii)}{=}\,
\psfrag{H}{$H$}
\psfrag{A}{$C$}
\psfrag{g}{$f$}
\rsdraw{0.45}{0.75}{fcoalgmap12}\;.$$
Here $(i)$ follow from definitions of $f$ and $\varepsilon_C$ and $(ii)$ from the fact that $\varepsilon u=\id_\uu$.

Finally, let us show that~$\alpha$ is an~$\varepsilon$-invariant~$u$-trace.
By definition of~$\alpha$ and the fact that~$f\co H \to C$ is an~$H$-module morphism, we have that
$$\,
\psfrag{g}{$\color{red}\alpha$}
\psfrag{a}{$a$}
\psfrag{A}{\hspace{-0.1cm}$\color{red}C$}
\psfrag{u}{\hspace{-0.1cm}$H$}
\rsdraw{0.45}{0.75}{fmodmap1}\;=\,
\psfrag{g}{$f$}
\psfrag{A}{\hspace{-0.1cm}$\color{red}C$}
\psfrag{u}{\hspace{-0.1cm}$H$}
\psfrag{r}{\hspace{-0.07cm}$\kappa$}
\rsdraw{0.45}{0.75}{fmodmap2}\;=\,
\psfrag{g}{$f$}
\psfrag{A}{\hspace{-0.1cm}$\color{red}C$}
\psfrag{u}{\hspace{-0.1cm}$H$}
\psfrag{r}{\hspace{-0.07cm}$\kappa$}
\rsdraw{0.45}{0.75}{fmodmap3}\;=\,
\psfrag{g}{$\color{red}\alpha$}
\psfrag{A}{\hspace{-0.1cm}$\color{red}C$}
\psfrag{u}{\hspace{-0.1cm}$H$}
\rsdraw{0.45}{0.75}{fmodmap4}\;.$$
Thus the morphism~$\alpha$ is $\varepsilon$-invariant.
It remains to show that~$\alpha=f\kappa$ is a $u$-trace. Indeed,
$$
\,
\psfrag{f}{$\color{red}\alpha$}
\psfrag{C}{$\color{red}C$}
\rsdraw{0.45}{0.75}{alphaepsu1}\; \overset{(i)}{=}
\,
\psfrag{g}{$f$}
\psfrag{r}{\hspace{-0.1cm}$\kappa$}
\psfrag{u}{$H$}
\psfrag{A}{$\color{red}C$}
\rsdraw{0.45}{0.75}{alphaepsu2}\;\overset{(ii)}{=}
\,
\psfrag{g}{$f$}
\psfrag{r}{\hspace{-0.1cm}$\kappa$}
\psfrag{u}{$H$}
\psfrag{A}{$\color{red}C$}
\rsdraw{0.45}{0.75}{alphaepsu3}\;\overset{(iii)}{=}
\,
\psfrag{g}{$f$}
\psfrag{r}{\hspace{-0.1cm}$\kappa$}
\psfrag{u}{$H$}
\psfrag{A}{$\color{red}C$}
\rsdraw{0.45}{0.75}{alphaepsu4}\;\overset{(iv)}{=}
\,
\psfrag{g}{$f$}
\psfrag{r}{\hspace{-0.1cm}$\kappa$}
\psfrag{u}{$H$}
\psfrag{A}{$\color{red}C$}
\rsdraw{0.45}{0.75}{alphaepsu5}\;\overset{(v)}{=}\,
\psfrag{g}{$f$}
\psfrag{r}{\hspace{-0.1cm}$\kappa$}
\psfrag{u}{$H$}
\psfrag{A}{$\color{red}C$}
\rsdraw{0.45}{0.75}{alphaepsu6}\;\overset{(vi)}{=}\,
\psfrag{f}{$\color{red}\alpha$}
\psfrag{r}{\hspace{-0.1cm}$\kappa$}
\psfrag{u}{$H$}
\psfrag{A}{$\color{red}C$}
\rsdraw{0.45}{0.75}{alphaepsu7}\;.
$$
Here~$(i)$ and~$(vi)$ follow from definition,~$(ii)$ and~$(v)$ follow by the fact that~$f\co H\to C$ is a coalgebra morphism,~$(iii)$ follows by the naturality of twist and the braiding, and~$(iv)$ follows by hypothesis on~$\kappa$.
\end{proof}

Any coalgebra morphism $\uu \to H$ satisfies the condition of Lemma \ref{centersandtraces}. Another family of examples satisfying the condition of Lemma \ref{centersandtraces} is given as follows: for any $X \in \Ob{\mc B}$, set
$$
\kappa^X=\,
\psfrag{l}{\hspace{0.25cm}$\sigma^C_X$}
\psfrag{C}{$H$}
\psfrag{X}{$X$}
\psfrag{H}{$H$}
\rsdraw{0.45}{0.75}{epsinvutrX}
\;=i_X\coevr_X\co \uu \to H.
$$
Then,
$$
\,
\psfrag{g}{\hspace{-0.1cm}$\kappa^X$}
\psfrag{C}{$H$}
\psfrag{X}{$X$}
\rsdraw{0.45}{0.75}{epsinvutrX0}
\;\overset{(i)}{=}\,
\psfrag{C}{$H$}
\psfrag{X}{$X$}
\rsdraw{0.45}{0.75}{epsinvutrX1}
\;\overset{(ii)}{=}\,
\psfrag{C}{$H$}
\psfrag{X}{$X$}
\rsdraw{0.45}{0.75}{epsinvutrX2}
\;\overset{(iii)}{=}\,
\psfrag{C}{$H$}
\psfrag{X}{$X$}
\rsdraw{0.45}{0.75}{epsinvutrX3}
\;\overset{(iv)}{=}\,
\psfrag{C}{$H$}
\psfrag{X}{$X$}
\rsdraw{0.45}{0.75}{epsinvutrX4}
\; \overset{(v)}{=}
\,
\psfrag{X}{$X$}
\psfrag{C}{$H$}
\rsdraw{0.45}{0.75}{epsinvutrX5}
\;\overset{(vi)}{=}\,
\psfrag{C}{$H$}
\psfrag{g}{\hspace{-0.1cm}$\kappa^X$}
\rsdraw{0.45}{0.75}{epsinvutrX6}
\;.$$
Here, $(i)$ and $(vi)$ follow by definition of~$\kappa^X$, $(ii)$ and $(v)$ follow by definition of comultiplication of $H$, $(iii)$ by the naturality of twists, and $(iv)$ by the naturality of the braiding and isotopy invariance of graphical calculus.

Thus $\kappa^X$ satisfies the condition of Lemma \ref{centersandtraces} and so
\begin{equation} \label{alphaX}\alpha^X= \,
\psfrag{C}{$H$}
\psfrag{X}{$X$}
\rsdraw{0.45}{0.75}{alphacarinter}
\;\co \uu \to C\end{equation} is an $\varepsilon$-invariant $u$-trace for $C$.
\begin{rmk} \normalfont Note that if $\mc B$ is $\kk$-linear, then any linear combination of $\delta$-invariant $\sigma$-traces is a $\delta$-invariant $\sigma$-trace. In particular, an in\-te\-re\-sting e\-xam\-ple of an~$\varepsilon$-in\-var\-i\-ant~$u$-trace comes from to\-po\-lo\-gi\-cal field the\-ory: if~$\mc B$ is a ribbon fu\-sion $\kk$-linear ca\-te\-gory and~$I$ is a re\-pre\-sen\-ta\-tive set of simple ob\-jects of~$\mc B$, then~$\alpha=\sum_{k\in I}\dim(k)\alpha^k$ is an $\varepsilon$-invariant $u$-trace. Here $\alpha^k$ is defined in \eqref{alphaX} and~$\dim(k)=\evr_k\coevl_k=\evl_k\coevr_k$ is the~dimension of~$k$.
\end{rmk}

\section{Proof of Theorem \ref*{powers}} \label{proofmain}
Our strategy to compute the $(n+1)$-th power of the paracocyclic operator $\tau_n(\delta,\sigma)$ is similar to the proof of cocyclicity condition from Connes and Moscovici in~\cite{connes_cyclic_1999}, where Hopf algebras over $\mathbb{C}$ are considered. We indeed proceed by induction. The difficulty  here is that the paracocyclic operators involve the braiding.
In our approach, based on graphical calculus, we manage to keep track the powers of paracocyclic operators. In Section~\ref{prepa} we list algebraic properties used in our proof of the equalities from Theo\-rem~\ref{powers}. In Section~\ref{subsec: kthpower} we show Formula~\eqref{kthpower}. In Section~\ref{finalstep} we show Formula~\eqref{n+1thpower}.

Recall that $H$ denotes a Hopf algebra in the braided monoidal ca\-te\-go\-ry $\mc B$, $\delta \co H\to \uu$ is an algebra morphism and $\sigma \co \uu \to H$ is a coalgebra morphism such that $\delta \sigma=\id_\uu$.
Given such a pair, we define the \emph{twisted antipode}~$\tilde{S} \co H \to H$ by \[\tilde{S}=
\,
\psfrag{b}{\hspace{-0.075cm}$H$}
\psfrag{l}{\hspace{-0.075cm}$\delta$}
\rsdraw{0.45}{0.75}{twistedantipode}
\;.\]
For brevity, we denote the twisted antipode $\tilde{S}$ graphically by $\,
\psfrag{b}{$H$}
\psfrag{v}[tl][tl]{$\thicksim$}
\rsdraw{0.45}{0.75}{twistedantipode1}
\;$.
With this notation, we will rewrite
$$\tau_n(\delta,\sigma)=\,
\psfrag{b}{\hspace{-0.1cm}$H$}
\psfrag{i}{\hspace{-0.25cm}$H^{\tens n-1}$}
\psfrag{v}[tl][tl]{$\thicksim$}
\psfrag{c}{$\sigma$}
\rsdraw{0.55}{0.75}{CMtaunDS}
\; \quad \text{if} \quad n\geq 1.$$
Similarly, equation \eqref{kthpower}, which is to be proven, rewrites as
$$\left(\tau_n(\delta, \sigma)\right)^k=\,
\psfrag{z}{\hspace{-0.75cm}$H^{\tens k-2}$}
\psfrag{c}{\hspace{-0.25cm}$k-1$}
\psfrag{c+}{$k$}
\psfrag{n}{$n$}
\psfrag{f}{\hspace{-0.6cm}$\left(\tau_{n-1}(\varepsilon,u)\right)^{k-1}$}
\psfrag{g}{\hspace{-0.8cm}$\tau_1(\delta, \sigma)$}
\psfrag{l}{\hspace{-0.1cm}$\delta$}
\psfrag{s}{$\sigma$}
\psfrag{h}{\hspace{-0.6cm}$H^{\tens n-k}$}
\psfrag{d}{\hspace{-0.5cm}$H^{\tens k-1}$}
\psfrag{v}[tl][tl]{$\thicksim$}
\rsdraw{0.55}{0.75}{taunkDS}  \;.$$

\subsection{Preliminary facts} \label{prepa}
In this section we state several lemmas, which are used in the proof of Theorem \ref{powers}. We mention that equalities $(a)$ and $(b)$ from Lemma \ref{algebraicppties} and the equality from Remark \ref{twistedantipodesigma} are already stated in \cite[Proposition 4.3]{khalkhalipourkia}.
In the lemma that follows, some properties of the twisted antipode are established.
\begin{lem} \label{algebraicppties} The following equalities hold:
$$
\begingroup
\allowdisplaybreaks
\begin{tabular}{ccccccc} \hspace{1.25cm}
&\,
\psfrag{b}{$H$}
\psfrag{v}[tl][tl]{$\thicksim$}
\rsdraw{0.55}{0.75}{twistedantipodecomult0nolab}\;  =  \,
\psfrag{b}{$H$}
\psfrag{v}[tl][tl]{$\thicksim$}
\rsdraw{0.55}{0.75}{twistedantipodecomult1nolab}\;&, &\,
\psfrag{b}{$H$}
\psfrag{v}[tl][tl]{$\thicksim$}
\rsdraw{0.55}{0.75}{twistedantipodemult0nolab}\;=\,
\psfrag{b}{$H$}
\psfrag{v}[tl][tl]{$\thicksim$}
\rsdraw{0.55}{0.75}{twistedantipodemult1nolab}\;&, &\,
\psfrag{b}{$H$}
\psfrag{v}[tl][tl]{$\thicksim$}
\rsdraw{0.55}{0.75}{antipodeaxiomtwistednolab}\;=\,
\psfrag{b}{$H$}
\psfrag{v}{\hspace{0.075cm}$\delta$}
\rsdraw{0.55}{0.75}{antipodeaxiomtwisted1nolab}\;,& \\ &&&&&&\\
  &(a)& &(b)&  &(c)&\\ &&&&&& \\
&\,
\psfrag{b}{$H$}
\psfrag{l}{\hspace{-0.05cm}$\delta$}
\psfrag{v}[tl][tl]{$\thicksim$}
\rsdraw{0.55}{0.75}{twistedantipodeadjoint0nolab}\;=
\,
\psfrag{b}{$H$}
\psfrag{l}{\hspace{-0.05cm}$\delta$}
\psfrag{v}{\hspace{0.05cm}$\delta$}
\rsdraw{0.55}{0.75}{twistedantipodeadjoint1nolab}\;&, &\,
\psfrag{b}{$H$}
\psfrag{v}[tl][tl]{$\thicksim$}
\rsdraw{0.55}{0.75}{twistedantipodesquarenolab}\;=\,
\psfrag{b}{$H$}
\psfrag{l}{\hspace{-0.05cm}$\delta$}
\rsdraw{0.55}{0.75}{twistedantipodesquare1nolab}\;&, &\,
\psfrag{b}{$H$}
\psfrag{v}[tl][tl]{$\thicksim$}
\rsdraw{0.55}{0.75}{specialnolab}\;=\,
\psfrag{b}{$H$}
\psfrag{l}{\hspace{-0.1cm}$\delta$}
\psfrag{v}[tl][tl]{$\thicksim$}
\rsdraw{0.55}{0.75}{special1nolab}\;&.
\\&&&&&&\\
&(d)& &(e)&  &(f)&
\end{tabular}
\endgroup
$$
\end{lem}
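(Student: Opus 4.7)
The plan is to prove the six identities (a)--(f) in order by graphical manipulation, expanding the twisted antipode via its definition $\tilde{S}=m(\delta\otimes S)\Delta$ and invoking the corresponding standard properties of $S$ together with the facts that $\delta$ is an algebra morphism and $\sigma$ is a coalgebra morphism. Each later identity can additionally use the earlier ones.

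For identity (b), I would expand $\tilde{S}m$ by definition, apply the anti-multiplicativity of $S$ recalled in Section \ref{hopfcat}, then use the bialgebra compatibility $\Delta m=(m\otimes m)(\id\otimes\tau_{H,H}\otimes\id)(\Delta\otimes\Delta)$ to push the comultiplication through the multiplication, and finally use $\delta m=\delta\otimes\delta$ to split the $\delta$ label and reassemble the right-hand side. The proof of (a) is the dual computation: expand $\Delta\tilde{S}$, apply the bialgebra compatibility followed by anti-comultiplicativity of $S$, and use coassociativity to collect the two $\delta$ instances on the right.

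Identity (c) is a direct consequence of the antipode axiom: after expanding the left-hand side, coassociativity allows one to group two of the three comultiplication outputs, and the axiom $m(\id\otimes S)\Delta=u\varepsilon$ collapses that group, leaving a single $\delta$ factor after applying counitality. Identity (d) follows by inserting a comultiplication, applying either (c) or the antipode axiom directly, and then using that $\delta$ is an algebra morphism to combine the resulting $\delta$ labels into the form on the right.

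For identity (e), the expression $\tilde{S}^2$ expands to $m(\delta\otimes S)\Delta\circ m(\delta\otimes S)\Delta$; I would first use (a) to rewrite $\Delta\tilde{S}$ as $(\tilde{S}\otimes\tilde{S})\tau_{H,H}\Delta$, then apply the antipode axiom (or (c)) to collapse the middle, landing on $S^2$ composed with the appropriate $\delta$-twist. Identity (f) should then follow by combining (e) with the anti-(co)multi\-pli\-ca\-ti\-vi\-ty identities (a) and (b), together with naturality of the braiding. The main obstacle throughout is bookkeeping of braidings: each use of anti-(co)multi\-pli\-ca\-ti\-vi\-ty of $S$ introduces an instance of $\tau_{H,H}$, so one must carefully reroute $\delta$-strands past these braidings using naturality in order to match exactly the diagrams on the right-hand sides. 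I expect identity (e), where both copies of $\tilde{S}$ must be opened and the antipode axiom applied in the middle of a braid-laden diagram, to require the most careful diagrammatic tracking.
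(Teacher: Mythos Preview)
Your overall strategy matches the paper's: expand $\tilde{S}$ via its definition and invoke the standard (anti-)(co)multiplicativity properties of $S$ together with the fact that $\delta$ is an algebra morphism. Parts (a), (b), (c) are proved in the paper essentially as you outline.

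The differences are in how (d), (e), (f) are derived from the earlier parts. For (d) the paper uses (a) rather than (c): apply (a) to rewrite $\Delta\tilde{S}$, use naturality of the braiding and the definition of $\tilde{S}$, then the multiplicativity of $\delta$ collapses the two $\delta$-labels into the left coadjoint coaction appearing on the right-hand side. For (e) the paper gives a one-line argument you may have missed: (e) follows \emph{immediately} from (d) by post-composing the left-hand side of (d) with $S$ and unfolding the definition of $\tilde{S}$. This bypasses the longer double expansion of $\tilde{S}^2$ that you anticipate as the most delicate step. For (f) the paper does not pass through (e) at all: it applies (b) to the twisted antipode of a product, uses the bialgebra axiom and (co)associativity, then invokes (c) and unitality to finish. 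Your proposed route via (e), (a), (b) is plausible but less direct than the paper's use of (b) and (c).
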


\begin{proof}
Let us first show the relation~$(a)$.
Indeed, by definition of $\tilde{S}$, the anti-comultiplicativity of the antipode, the coassociativity, and the naturality of the braiding, we have
\[\,
\psfrag{b}{$H$}
\psfrag{v}[tl][tl]{$\thicksim$}
\rsdraw{0.55}{0.75}{twistedantipodecomult0nolab}\;=\,
\psfrag{b}{$H$}
\psfrag{l}{\hspace{-0.05cm}$\delta$}
\rsdraw{0.55}{0.75}{twistedantipodecomult2nolab}\;=\,
\psfrag{b}{$H$}
\psfrag{l}{\hspace{-0.05cm}$\delta$}
\rsdraw{0.55}{0.75}{twistedantipodecomult3nolab}\;=\,
\psfrag{b}{$H$}
\psfrag{l}{\hspace{-0.05cm}$\delta$}
\rsdraw{0.55}{0.75}{twistedantipodecomult4nolab}\;=\,
\psfrag{b}{$H$}
\psfrag{v}[tl][tl]{$\thicksim$}
\rsdraw{0.55}{0.75}{twistedantipodecomult1nolab}\;.\]
Let us show the relation~$(b)$.
Indeed, by definition of $\tilde{S}$, the fact that comultiplication is an algebra morphism, the fact that $\delta$ is an algebra morphism, and the naturality of the braiding we have
\[\,
\psfrag{b}{$H$}
\psfrag{v}[tl][tl]{$\thicksim$}
\rsdraw{0.55}{0.75}{twistedantipodemult0nolab}\;=\,
\psfrag{b}{$H$}
\psfrag{l}{\hspace{-0.05cm}$\delta$}
\rsdraw{0.55}{0.75}{twistedantipodemult2nolab}\;=\,
\psfrag{b}{$H$}
\psfrag{l}{\hspace{-0.05cm}$\delta$}
\rsdraw{0.55}{0.75}{twistedantipodemult3nolab}\;=\,
\psfrag{b}{$H$}
\psfrag{l}{\hspace{-0.05cm}$\delta$}
\rsdraw{0.55}{0.75}{twistedantipodemult4nolab}\;=\,
\psfrag{b}{$H$}
\psfrag{v}[tl][tl]{$\thicksim$}
\rsdraw{0.55}{0.75}{twistedantipodemult1nolab}\;.\]
Let us show the relation~$(c)$.
Indeed, this relation follows by the definition of $\tilde{S}$, the coassociativity, the antipode axiom, and the counitality:
\[\,
\psfrag{b}{$H$}
\psfrag{v}[tl][tl]{$\thicksim$}
\rsdraw{0.55}{0.75}{antipodeaxiomtwistednolab}\;=
\,\psfrag{b}{$H$}
\psfrag{l}{\hspace{-0.05cm}$\delta$}
\rsdraw{0.55}{0.75}{antipodeaxiomtwisted2nolab}\;=
\,\psfrag{b}{$H$}
\psfrag{l}{\hspace{-0.05cm}$\delta$}
\rsdraw{0.55}{0.75}{antipodeaxiomtwisted3nolab}\;=
\,\psfrag{b}{$H$}
\psfrag{l}{\hspace{-0.05cm}$\delta$}
\rsdraw{0.55}{0.75}{antipodeaxiomtwisted4nolab}=
\,\psfrag{b}{$H$}
\psfrag{v}{\hspace{0.075cm}$\delta$}
\rsdraw{0.55}{0.75}{antipodeaxiomtwisted1nolab}\;.\]
Now we show the equality~$(d)$.
It follows by the part~$(a)$, the naturality of the braiding, the definition of~$\tilde{S}$, the fact that~$\delta$ is an algebra morphism, and the definition of left coadjoint coaction:
\[\,
\psfrag{b}{$H$}
\psfrag{l}{\hspace{-0.05cm}$\delta$}
\psfrag{v}[tl][tl]{$\thicksim$}
\rsdraw{0.55}{0.75}{twistedantipodeadjoint0nolab}\;=\,
\psfrag{b}{$H$}
\psfrag{l}{\hspace{-0.05cm}$\delta$}
\psfrag{v}[tl][tl]{$\thicksim$}
\rsdraw{0.55}{0.75}{twistedantipodeadjoint2nolab}\;=\,
\psfrag{b}{$H$}
\psfrag{l}{\hspace{-0.05cm}$\delta$}
\psfrag{v}[tl][tl]{$\thicksim$}
\rsdraw{0.55}{0.75}{twistedantipodeadjoint3nolab}\;=\,
\psfrag{b}{$H$}
\psfrag{l}{\hspace{-0.05cm}$\delta$}
\psfrag{v}[tl][tl]{$\thicksim$}
\rsdraw{0.55}{0.75}{twistedantipodeadjoint4nolab}\;=\,
\psfrag{b}{$H$}
\psfrag{l}{\hspace{-0.05cm}$\delta$}
\rsdraw{0.55}{0.75}{twistedantipodeadjoint1nolab}\;.\]
The equality~$(e)$ is a consequence of the equality~$(d)$.
To see this, compose the left hand side of~$(d)$ with the antipode $S$ of $H$ and use the definition of $\tilde{S}$.

\noindent Finally, let us show the equation~$(f)$.
Indeed, this equation follows by the part~$(b)$, the fact that comultiplication is an algebra morphism, the (co)associativity, the naturality of the braiding, the part~$(c)$, and the unitality:
\[\,
\psfrag{b}{$H$}
\psfrag{v}[tl][tl]{$\thicksim$}
\rsdraw{0.55}{0.75}{specialnolab}\;=\,
\psfrag{b}{$H$}
\psfrag{v}[tl][tl]{$\thicksim$}
\rsdraw{0.55}{0.75}{special2nolab}\;=\,
\psfrag{b}{$H$}
\psfrag{v}[tl][tl]{$\thicksim$}
\rsdraw{0.55}{0.75}{special3nolab}\;=\,
\psfrag{b}{$H$}
\psfrag{v}[tl][tl]{$\thicksim$}
\psfrag{l}{\hspace{-0.05cm}$\delta$}
\rsdraw{0.55}{0.75}{special4nolab}\;=\,
\psfrag{b}{$H$}
\psfrag{v}[tl][tl]{$\thicksim$}
\psfrag{l}{\hspace{-0.05cm}$\delta$}
\rsdraw{0.55}{0.75}{special5nolab}\;=\,
\psfrag{b}{$H$}
\psfrag{v}[tl][tl]{$\thicksim$}
\psfrag{l}{\hspace{-0.05cm}$\delta$}
\rsdraw{0.55}{0.75}{special1nolab}\;.\]
\end{proof}
\begin{rmk} \label{twistedantipodeeps} \normalfont
Another useful property of the twisted antipode $\tilde{S}$ is that $\varepsilon\tilde{S} =\delta$.
It follows by the definition of $\tilde{S}$, the fact that $\varepsilon S = \varepsilon$, and the counitality.
\end{rmk}
The following lemma gives the expression of the paracocyclic operator~$\tau_n(\delta,\sigma)$ in terms of~$\tau_{n-1}(\varepsilon,u)$.
\begin{lem} \label{recurrencerelations} If $n \ge 2$, then
\begingroup
\allowdisplaybreaks
\begin{itemize}
\item[\textit{(a)}] \hspace{3.5cm}$\tau_n(\delta,\sigma)=\,
\psfrag{b}{$H$}
\psfrag{i}{\hspace{-0.5cm}$H^{\tens n-2}$}
\psfrag{v}[tl][tl]{$\thicksim$}
\psfrag{s}{$\sigma$}
\psfrag{f}{$\hspace{-0.55cm}\tau_{n-1}(\varepsilon,u)$}
\rsdraw{0.55}{0.75}{recctaun0}\;$,\vspace{0.5cm}
\item[\textit{(b)}] \hspace{3.5cm} $\tau_n(\delta,\sigma)=\,
\psfrag{b}{$H$}
\psfrag{i}{\hspace{-0.5cm}$H^{\tens n-2}$}
\psfrag{v}{\hspace{0.05cm}$\delta$}
\psfrag{s}{$\sigma$}
\psfrag{f}{$\hspace{-0.55cm}\tau_{n-1}(\varepsilon,u)$}
\rsdraw{0.55}{0.75}{recctaun1}\;.$
\end{itemize}
\endgroup
\end{lem}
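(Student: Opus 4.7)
The plan is to prove both (a) and (b) by graphical manipulation, treating (a) first and then deducing (b) from it by applying the twisted antipode axiom. Throughout, we exploit the algebraic properties of $\tilde S$ collected in Lemma~\ref{algebraicppties} together with the naturality of the braiding and the inductive definitions of the left diagonal action of $H$ on $H^{\tens n}$.

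For part (a), I would start from the defining picture of $\tau_n(\delta,\sigma)$, in which $\tilde S$ is applied to the first input strand and the resulting element is distributed, via iterated comultiplication, over the remaining $n-1$ input strands of $H$ together with the coalgebra morphism $\sigma\co\uu\to H$ at the end. Using coassociativity, I would split the $n$-fold comultiplication of $\tilde S$ into a first comultiplication followed by an $(n-1)$-fold comultiplication; this isolates one branch of $\Delta\tilde S$ acting on the first output strand (multiplying with $h_2$), while the remaining branches form an $(n-1)$-fold pattern whose shape matches the defining picture of $\tau_{n-1}(\varepsilon,u)$, where of course $\tilde S = S$ because $\varepsilon S=\varepsilon$ (see Remark preceding Lemma~\ref{recurrencerelations}). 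To match the patterns exactly, Lemma~\ref{algebraicppties}(a) (anti-comultiplicativity of $\tilde S$) is used to exchange the order in which the two factors of $\Delta\tilde S$ come out, and the naturality of the braiding lets us route the strands as required. After this rearrangement, the $\tau_{n-1}(\varepsilon,u)$-coupon can be inserted directly.

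For part (b), I would take the right-hand side of (a) and focus on the leftmost $\tilde S$-coupon that remains. After using coassociativity and counitality to reshape a small portion of the diagram, this coupon sits in the configuration $m(\tilde S\tens\id_H)\Delta$, which by Lemma~\ref{algebraicppties}(c) equals $u\delta$. Replacing it yields precisely the picture in (b), with a $\delta$-coupon in place of the $\tilde S$-coupon. This step is essentially cosmetic once (a) is in hand.

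The main obstacle is the bookkeeping of the braiding crossings and coproduct branches that appear when the diagonal action is unfolded. Concretely, one has to verify that after applying Lemma~\ref{algebraicppties}(a) and invoking the naturality of the braiding, the lower part of the resulting picture is genuinely the diagram of $\tau_{n-1}(\varepsilon,u)$ (with its own internal iterated comultiplication and braiding pattern), rather than merely isotopic to it up to some residual crossing. I expect this to reduce to a short side induction on $n$ tracking how each coproduct branch of $\tilde S(h_1)$ passes across the strands $h_3,\dots,h_n,\sigma$; once the inductive step is verified, both formulas follow with only routine graphical manipulation.
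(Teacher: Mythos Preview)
Your plan for part (a) is essentially the paper's proof: start from the defining picture of $\tau_n(\delta,\sigma)$, apply Lemma~\ref{algebraicppties}(a) once to split $\Delta\tilde S$, use naturality of the braiding and the inductive definition of the left diagonal action, and then recognize the $\tau_{n-1}(\varepsilon,u)$ pattern. No side induction is needed; the computation is uniform in $n$.

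Your plan for part (b), however, diverges from the paper and has a gap. The paper does \emph{not} deduce (b) from (a); it proves (b) directly and more cheaply. For $n=2$ the statement is literally the definition, and for $n\ge 3$ one starts from the original defining picture of $\tau_n(\delta,\sigma)$ (the one in which the twisted antipode is still written out as $\delta$ on one coproduct leg and $S$ on the other), unfolds the left diagonal action, regroups by coassociativity, and recognizes the remaining block as $\tau_{n-1}(\varepsilon,u)$. No property of $\tilde S$ from Lemma~\ref{algebraicppties} is invoked at all for (b); the $\delta$ in the final picture is the very $\delta$ already present in the definition of $\tilde S$.

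Your proposed route for (b)---take the right-hand side of (a) and locate a pattern $m(\tilde S\otimes\id_H)\Delta$ so that Lemma~\ref{algebraicppties}(c) applies---is not clearly available. In the right-hand side of (a) the surviving $\tilde S$ sits on a coproduct leg of $h_1$ and is then multiplied by $\sigma$, not by another coproduct leg of the same element; so the configuration $m(\tilde S\otimes\id_H)\Delta$ does not appear without unpacking the $\tau_{n-1}(\varepsilon,u)$ box, which defeats the purpose. You also do not account for the unit $u$ produced by Lemma~\ref{algebraicppties}(c). If you want to pass from (a) to (b), a cleaner bridge is to expand $\tilde S$ by its definition (so that $\delta$ reappears) rather than to invoke Lemma~\ref{algebraicppties}(c); but at that point you are essentially redoing the paper's direct argument, so it is simpler to prove (b) straight from the definition.
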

\begin{proof}
Let us first show the equation~$(a)$.
Indeed, by definition of $\tau_n(\delta,\sigma)$, Lemma~\ref{algebraicppties}$(a)$, the naturality of the braiding, inductive definition of the left diagonal action, and the definition of $\tau_{n-1}(\varepsilon,u)$, we have
\[\, \tau_n(\delta,\sigma)= \psfrag{b}{\hspace{-0.15cm}$H$}
\psfrag{i}{\hspace{-0.25cm}$H^{\tens n-1}$}
\psfrag{v}[tl][tl]{$\thicksim$}
\psfrag{c}{$\sigma$}
\rsdraw{0.55}{0.75}{CMtaunDS} \;= \,
\psfrag{b}{\hspace{-0.15cm}$H$}
\psfrag{i}{\hspace{-0.5cm}$H^{\tens n-1}$}
\psfrag{v}[tl][tl]{$\thicksim$}
\psfrag{s}{$\sigma$}
\rsdraw{0.55}{0.75}{recctaun01}\;=\,
\psfrag{b}{\hspace{-0.05cm}$H$}
\psfrag{i}{\hspace{-0.5cm}$H^{\tens n-1}$}
\psfrag{v}[tl][tl]{$\thicksim$}
\psfrag{s}{$\sigma$}
\rsdraw{0.55}{0.75}{recctaun02}\; =\,
\psfrag{b}{\hspace{-0.1cm}$H$}
\psfrag{i}{\hspace{-0.75cm}$H^{\tens n-2}$}
\psfrag{v}[tl][tl]{$\thicksim$}
\psfrag{s}{$\sigma$}
\rsdraw{0.55}{0.75}{recctaun03}\;=\,
\psfrag{b}{$H$}
\psfrag{i}{\hspace{-0.5cm}$H^{\tens n-2}$}
\psfrag{v}[tl][tl]{$\thicksim$}
\psfrag{s}{$\sigma$}
\psfrag{f}{$\hspace{-0.55cm}\tau_{n-1}(\varepsilon,u)$}
\rsdraw{0.55}{0.75}{recctaun0}\;.\]
Further, we show the part~$(b)$.
For~$n=2$, the statement follows by definition.
From now on, suppose that~$n\ge 3$.
By the definition of~$\tau_n(\delta,\sigma)$, the definition of the left diagonal action, the coassociativity, and the definition of~$\tau_{n-1}(\varepsilon,u)$, we have:
\[\tau_n(\delta,\sigma)= \psfrag{b}{\hspace{-0.15cm}$H$}
\psfrag{i}{\hspace{-0.25cm}$H^{\tens n-1}$}
\psfrag{v}[tl][tl]{$\thicksim$}
\psfrag{c}{$\sigma$}
\rsdraw{0.55}{0.75}{CMtaunDS} \; = \,
\psfrag{b}{\hspace{0.05cm}$1$}
\psfrag{i}{\hspace{-0.1cm}$2$}
\psfrag{v}[tl][tl]{$\thicksim$}
\psfrag{c}{$\sigma$}
\psfrag{e}{\hspace{-0.4cm}$n-1$}
\psfrag{r}{\hspace{-0.1cm}$n$}
\rsdraw{0.55}{0.75}{recctaun11}\;=\,
\psfrag{b}{\hspace{0.05cm}$1$}
\psfrag{i}{\hspace{-0.05cm}$2$}
\psfrag{v}[tl][tl]{$\thicksim$}
\psfrag{c}{$\sigma$}
\psfrag{e}{\hspace{-0.3cm}$n-1$}
\psfrag{r}{$n$}
\psfrag{l}{\hspace{-0.05cm}$\delta$}
\rsdraw{0.55}{0.75}{recctaun12}\;=\,
\psfrag{b}{$H$}
\psfrag{i}{\hspace{-0.5cm}$H^{\tens n-2}$}
\psfrag{v}{\hspace{0.05cm}$\delta$}
\psfrag{s}{$\sigma$}
\psfrag{f}{$\hspace{-0.55cm}\tau_{n-1}(\varepsilon,u)$}
\rsdraw{0.55}{0.75}{recctaun1}\;.\]
\end{proof}
The equalities stated in the following lemma are used in computation of squares of the paracocyclic operator $\tau_n(\delta,\sigma)$ in the case $n\ge 3$.
\begin{lem} \label{squarelemma} For any $n\ge 2$, we have:
$$
\begin{tabular}{ccccc}
&\,
\psfrag{b}{$H^{\tens n}$}
\psfrag{i}{\hspace{-0.25cm}$H^{\tens n-1}$}
\psfrag{f}{$\hspace{-0.35cm}\tau_{n}(\varepsilon,u)$}
\rsdraw{0.55}{0.75}{intertauncomult0}\;=\,
\psfrag{b}{$H$}
\psfrag{i}{\hspace{-0.25cm}$H^{\tens n-2}$}
\psfrag{r}{\hspace{-0.25cm}$H^{\tens n}$}
\psfrag{f}{$\hspace{-0.2cm}\tau_{n}(\varepsilon,u)$}
\rsdraw{0.55}{0.75}{intertauncomult1}\;&,
&\,
\psfrag{b}{$H$}
\psfrag{i}{\hspace{-0.45cm}$H^{\tens n-1}$}
\psfrag{r}{\hspace{-0.1cm}$H^{\tens n}$}
\psfrag{f}{$\hspace{-0.35cm}\tau_{n}(\varepsilon,u)$}
\rsdraw{0.55}{0.75}{intertaunmult0}\;=
\,
\psfrag{i}{\hspace{-0.25cm}$H^{\tens n}$}
\psfrag{b}{$H$}
\psfrag{r}{\hspace{-0.1cm}$H^{\tens n-2}$}
\psfrag{f}{$\hspace{-0.25cm}\tau_{n}(\varepsilon,u)$}
\rsdraw{0.55}{0.75}{intertaunmult1}\;&, \\ &&&&\\&(a)& &(b)&\\&&&&\\
&\,
\psfrag{i}{\hspace{-0.25cm}$H^{\tens n-1}$}
\psfrag{b}{$H$}
\psfrag{r}{\hspace{-0.1cm}$H^{\tens n-1}$}
\psfrag{f}{$\hspace{-0.35cm}\tau_{n}(\varepsilon,u)$}
\rsdraw{0.55}{0.75}{taunmultcomult0y}\;=\,
\psfrag{i}{\hspace{-0.25cm}$H^{\tens n-1}$}
\psfrag{b}{$H$}
\psfrag{r}{\hspace{-0.1cm}$H^{\tens n-2}$}
\psfrag{f}{$\hspace{-0.35cm}\tau_{n-1}(\varepsilon,u)$}
\rsdraw{0.55}{0.75}{taunmultcomult1y}\;&,
&\,
\psfrag{i}{\hspace{-0.15cm}$H^{\tens n}$}
\psfrag{b}{$H$}
\psfrag{l}{\hspace{-0.15cm}$H^{\tens n-1}$}
\psfrag{r}{\hspace{-0.1cm}$H^{\tens n-2}$}
\psfrag{f}{$\hspace{-0.35cm}\tau_{n-1}(\varepsilon,u)$}
\psfrag{g}{$\hspace{-0.35cm}\tau_{n}(\varepsilon,u)$}
\rsdraw{0.55}{0.75}{tauntaun-10}\; = \,
\psfrag{i}{\hspace{-0.15cm}$H^{\tens n}$}
\psfrag{b}{$H$}
\psfrag{r}{\hspace{-0.1cm}$H^{\tens n-1}$}
\psfrag{f}{$\hspace{-0.35cm}\tau_{n-1}(\varepsilon,u)$}
\rsdraw{0.55}{0.75}{tauntaun-11}\;&. \\ &&&&\\&(c)& &(d)&
\end{tabular}$$
\end{lem}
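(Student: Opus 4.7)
The plan is to establish each of the four equalities (a)--(d) by direct graphical manipulation, relying on the fact that for the modular pair $(\varepsilon, u)$ the twisted antipode $\tilde S$ reduces to the ordinary antipode $S$ and that the right diagonal action of $\sigma = u$ on $H^{\tens n}$ collapses to insertions of the unit along each strand. Unpacking the definition of $\tau_n(\varepsilon,u)$ from Section~\ref{maincomputation}, the operator amounts to applying $S$ to the first tensorand together with a cyclic shift implemented through $n-1$ braidings and a comultiplication. The tools available are the antipode axiom, the anti-(co)multiplicativity of $S$ recalled in Section~\ref{hopfcat}, the bialgebra compatibility $\Delta m = (m\tens m)(\id_H \tens \tau_{H,H} \tens \id_H)(\Delta \tens \Delta)$, naturality of the braiding, and the hexagon axioms \eqref{br1}--\eqref{br2}.

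For (a), the approach is to push the comultiplication that appears on one side through $\tau_n(\varepsilon,u)$. When $\Delta$ crosses the antipode $S$ one invokes anti-comultiplicativity (the $\delta = \varepsilon$ case of Lemma~\ref{algebraicppties}(a)), and the resulting cascade of braiding crossings is reorganized using naturality and \eqref{br1}. Part (b) is dual: a multiplication is slid across $\tau_n(\varepsilon,u)$, using anti-multiplicativity of $S$ (the $\delta = \varepsilon$ case of Lemma~\ref{algebraicppties}(b)) together with the bialgebra compatibility to absorb the extra crossings produced along the way.

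For (c), the strategy is to combine parts (a) and (b): the adjacent $m$ and $\Delta$ surrounding $\tau_n(\varepsilon,u)$ are slid inward using the intertwining relations already established, bringing a pair of strands into a position where the antipode axiom $m(S\tens \id_H)\Delta = u\varepsilon$ annihilates them, leaving $\tau_{n-1}(\varepsilon,u)$ acting on the remaining tensorands. For (d), the composition-type expression featuring both $\tau_n(\varepsilon,u)$ and $\tau_{n-1}(\varepsilon,u)$ is unpacked, the inner cyclic shift is moved using (a) or (c), and the antipode axiom is applied at the strand where an $S$ meets its identity partner; the surviving configuration is then recognized as the prescribed right-hand side by coassociativity and counitality.

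The main obstacle will be the careful bookkeeping of braiding crossings produced whenever a morphism is slid across $\tau_n(\varepsilon,u)$, which itself contains $n-1$ braidings. I expect (c) to be the most delicate step, since it simultaneously requires the horizontal slides of (a) and (b) and a precise deployment of the antipode axiom at the correct height of the diagram; one must ensure that the strand carrying $S$ ends up adjacent to exactly the strand on which the $\Delta$--$m$ pair acts, with no stray crossings obstructing the collapse to $\tau_{n-1}(\varepsilon,u)$.
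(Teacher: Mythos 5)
Your plan for part (a) does match the paper's argument: there the equality is obtained from the definition of $\tau_n(\varepsilon,u)$, the bialgebra compatibility, coassociativity, anti-comultiplicativity of the antipode, and naturality of the braiding, exactly the toolkit you name. But from (b) onwards the proposal diverges from what is actually needed, and in a way that signals a genuine gap rather than merely a different route. For (b) you invoke anti-multiplicativity of $S$ and the bialgebra axiom; in fact the multiplied strands never interact with the antipode strand at all, and the paper's proof of (b) consists only of the definition of $\tau_n(\varepsilon,u)$, naturality of the braiding, and associativity of $m$ (the multiplication simply gets absorbed into the iterated multiplication of the diagonal action). Proposing the heavier tools suggests a misreading of which tensorands the multiplication acts on, and a proof built on that reading would be a proof of a different statement.

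The more serious issue is (c). The drop from $\tau_n(\varepsilon,u)$ to $\tau_{n-1}(\varepsilon,u)$ is not produced by an antipode-axiom cancellation $m(S\tens\id_H)\Delta=u\varepsilon$, and nothing in your sketch explains how the strand carrying $S$ would come to sit next to a matching $\Delta$--$m$ pair; you yourself flag this as the step you cannot control. In the paper this reduction is an immediate consequence of the recurrence of Lemma~\ref{recurrencerelations}$(b)$ specialized to $(\delta,\sigma)=(\varepsilon,u)$, which rewrites $\tau_n(\varepsilon,u)$ through $\tau_{n-1}(\varepsilon,u)$, combined with the bialgebra compatibility; no antipode axiom is used there. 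The antipode axiom enters only in part (d), and then twice, together with anti-multiplicativity, anti-comultiplicativity, the module axiom for the diagonal action, and $\varepsilon S=\varepsilon$ -- a considerably longer chain than the single cancellation you describe. Since your proposal never isolates the recurrence relation (or any substitute mechanism for the index drop) and supplies no actual diagrammatic computation for (c) or (d), it does not yet constitute a proof of the lemma; the missing ingredient is precisely an identity of the type of Lemma~\ref{recurrencerelations}$(b)$, or an explicit verified computation showing where and how the antipode cancellations occur.
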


\begin{proof}
We begin by showing the equality~$(a)$.
Let us first inspect the case~$n=2$.
To see that the equality is true in this case, we use the definition of~$\tau_2(\varepsilon,u)$, the fact that comultiplication is an algebra morphism, the coassociativity, the anti-comultiplicativity of the antipode, and the naturality of the braiding:
\[\,
\psfrag{b}{$H^{\tens 2}$}
\psfrag{f}{$\hspace{-0.35cm}\tau_{2}(\varepsilon,u)$}
\rsdraw{0.55}{0.75}{intertauncomult0casen2}\;=\,
\psfrag{b}{$H$}
\psfrag{f}{$\hspace{-0.35cm}\tau_{2}(\varepsilon,u)$}
\rsdraw{0.55}{0.75}{intertauncomult0casen21}\;=
\,
\psfrag{h}{$H$}
\psfrag{f}{$\hspace{-0.35cm}\tau_{2}(\varepsilon,u)$}
\rsdraw{0.55}{0.75}{intertauncomult0casen22}\;=\,
\psfrag{h}{$H$}
\psfrag{f}{$\hspace{-0.35cm}\tau_{2}(\varepsilon,u)$}
\rsdraw{0.55}{0.75}{intertauncomult0casen23}\;=\,
\psfrag{h}{$H$}
\psfrag{f}{$\hspace{-0.35cm}\tau_{2}(\varepsilon,u)$}
\rsdraw{0.55}{0.75}{intertauncomult0casen24}\;=\,
\psfrag{b}{$H$}
\psfrag{r}{\hspace{-0.1cm}$H^{\tens 2}$}
\psfrag{f}{$\hspace{-0.25cm}\tau_{2}(\varepsilon,u)$}
\rsdraw{0.55}{0.75}{intertauncomult0casen25}\;.\] From now on, suppose that $n\ge 3$. By definition of $\tau_n(\varepsilon,u)$, the fact that comultiplication is an algebra morphism, the naturality of the braiding, the coassociativity, and the anti-comultiplicativity of the antipode we have:
\begingroup
\allowdisplaybreaks
\begin{align*}
\,
\psfrag{b}{$H^{\tens n}$}
\psfrag{i}{\hspace{-0.25cm}$H^{\tens n-1}$}
\psfrag{f}{$\hspace{-0.35cm}\tau_{n}(\varepsilon,u)$}
\rsdraw{0.55}{0.75}{intertauncomult0}\;&=\,
\psfrag{b}{$1$}
\psfrag{i}{$2$}
\psfrag{e}{$n-1$}
\psfrag{r}{\hspace{0.1cm}$n$}
\rsdraw{0.55}{0.75}{intertauncomult0casenge3}\;=\,
\psfrag{b}{$1$}
\psfrag{i}{$2$}
\psfrag{e}{$n-1$}
\psfrag{r}{\hspace{0.1cm}$n$}
\rsdraw{0.55}{0.75}{intertauncomult0casenge31}\;=\,
\psfrag{b}{$1$}
\psfrag{i}{$2$}
\psfrag{e}{$n-1$}
\psfrag{r}{\hspace{0.1cm}$n$}
\rsdraw{0.55}{0.75}{intertauncomult0casenge32}\;\\
&=\,
\psfrag{b}{$1$}
\psfrag{i}{$2$}
\psfrag{e}{$n-1$}
\psfrag{r}{\hspace{0.1cm}$n$}
\rsdraw{0.55}{0.75}{intertauncomult0casenge33}\;=\,
\psfrag{b}{$1$}
\psfrag{i}{$2$}
\psfrag{e}{$n-1$}
\psfrag{r}{\hspace{0.1cm}$n$}
\rsdraw{0.55}{0.75}{intertauncomult0casenge34}\;=\,
\psfrag{b}{$H$}
\psfrag{i}{\hspace{-0.25cm}$H^{\tens n-2}$}
\psfrag{r}{\hspace{-0.25cm}$H^{\tens n}$}
\psfrag{f}{$\hspace{-0.2cm}\tau_{n}(\varepsilon,u)$}
\rsdraw{0.55}{0.75}{intertauncomult1}\;.
\end{align*}
\endgroup
Let us show the equality~$(b)$. Indeed, it follows by definition of $\tau_n(\varepsilon,u)$, the naturality of the braiding and the associativity:
\[\,
\psfrag{b}{$H$}
\psfrag{i}{\hspace{-0.45cm}$H^{\tens n-1}$}
\psfrag{r}{\hspace{-0.1cm}$H^{\tens n}$}
\psfrag{f}{$\hspace{-0.35cm}\tau_{n}(\varepsilon,u)$}
\rsdraw{0.55}{0.75}{intertaunmult0}\;=\,
\psfrag{b}{$1$}
\psfrag{i}{$2$}
\psfrag{e}{$n-1$}
\psfrag{n}{\hspace{0.1cm}$n$}
\psfrag{n+1}{$n+1$}
\rsdraw{0.55}{0.75}{intertaunmult2}\;
=\,
\psfrag{b}{$1$}
\psfrag{i}{$2$}
\psfrag{e}{$n-1$}
\psfrag{n}{\hspace{0.1cm}$n$}
\psfrag{n+1}{$n+1$}
\rsdraw{0.55}{0.75}{intertaunmult3}\;=\,
\psfrag{i}{\hspace{-0.25cm}$H^{\tens n}$}
\psfrag{b}{$H$}
\psfrag{r}{\hspace{-0.1cm}$H^{\tens n-2}$}
\psfrag{f}{$\hspace{-0.25cm}\tau_{n}(\varepsilon,u)$}
\rsdraw{0.55}{0.75}{intertaunmult1}\;.\]
Let us show the equality~$(c)$.
It follows from Lemma \ref{recurrencerelations}$(b)$ applied on~$\delta=\varepsilon$ and~$\sigma=u$ and by the fact that multiplication is an algebra morphism:
\[\,
\psfrag{i}{\hspace{-0.25cm}$H^{\tens n-1}$}
\psfrag{b}{$H$}
\psfrag{r}{\hspace{-0.1cm}$H^{\tens n-1}$}
\psfrag{f}{$\hspace{-0.35cm}\tau_{n}(\varepsilon,u)$}
\rsdraw{0.55}{0.75}{taunmultcomult0}\;=
\,
\psfrag{i}{\hspace{-0.4cm}$H^{\tens n-2}$}
\psfrag{b}{$H$}
\psfrag{f}{$\hspace{-0.7cm}\tau_{n-1}(\varepsilon,u)$}
\psfrag{l}{\hspace{-0.3cm}$H^{\tens n-1}$}
\rsdraw{0.55}{0.75}{intertauncomultmult1x}\;=\,
\psfrag{i}{\hspace{-0.4cm}$H^{\tens n-2}$}
\psfrag{b}{$H$}
\psfrag{f}{$\hspace{-0.7cm}\tau_{n-1}(\varepsilon,u)$}
\psfrag{l}{\hspace{-0.3cm}$H^{\tens n-1}$}
\rsdraw{0.55}{0.75}{intertauncomultmult1xx}\;.\]
Finally, let us show the equality~$(d)$. Indeed, we have:
\begingroup
\allowdisplaybreaks
\begin{align*}\,
\psfrag{i}{\hspace{-0.15cm}$H^{\tens n}$}
\psfrag{b}{$H$}
\psfrag{l}{\hspace{-0.15cm}$H^{\tens n-1}$}
\psfrag{r}{\hspace{-0.1cm}$H^{\tens n-2}$}
\psfrag{f}{$\hspace{-0.35cm}\tau_{n-1}(\varepsilon,u)$}
\psfrag{g}{$\hspace{-0.35cm}\tau_{n}(\varepsilon,u)$}
\rsdraw{0.55}{0.75}{tauntaun-10x}\;&\overset{(i)}{=}
\,
\psfrag{i}{\hspace{-0.15cm}$H^{\tens n-1}$}
\psfrag{b}{$H$}
\psfrag{l}[cc][cc][0.75]{\hspace{0.55cm}$H^{\tens n-2}$}
\psfrag{f}{$\id_{H^{\tens n-1}}$}
\psfrag{g}{$\hspace{-0.35cm}\tau_{n}(\varepsilon,u)$}
\rsdraw{0.55}{0.75}{tauntaun-12}\;\overset{(ii)}{=}
\,
\psfrag{i}{\hspace{-0.15cm}$H^{\tens n-2}$}
\psfrag{b}{$H$}
\psfrag{g}{$\hspace{-0.35cm}\tau_{n}(\varepsilon,u)$}
\rsdraw{0.55}{0.75}{tauntaun-13}\;\overset{(iii)}{=}
\,
\psfrag{i}{\hspace{-0.15cm}$H^{\tens n-2}$}
\psfrag{b}{$H$}
\psfrag{g}{$\hspace{-0.35cm}\tau_{n}(\varepsilon,u)$}
\rsdraw{0.55}{0.75}{tauntaun-14}\;\overset{(iv)}{=}
\,
\psfrag{i}{\hspace{-0.15cm}$H^{\tens n-2}$}
\psfrag{b}{$H$}
\psfrag{g}{$\hspace{-0.35cm}\tau_{n}(\varepsilon,u)$}
\rsdraw{0.55}{0.75}{tauntaun-15}\;\overset{(v)}{=}\\&\overset{(v)}{=} \,
\psfrag{i}{\hspace{-0.15cm}$H^{\tens n-2}$}
\psfrag{b}{$H$}
\psfrag{g}{$\hspace{-0.35cm}\tau_{n}(\varepsilon,u)$}
\rsdraw{0.55}{0.75}{tauntaun-16}\;\overset{(vi)}{=}\,
\psfrag{i}{\hspace{-0.15cm}$H^{\tens n-2}$}
\psfrag{b}{$H$}
\psfrag{g}{$\hspace{-0.35cm}\tau_{n}(\varepsilon,u)$}
\rsdraw{0.55}{0.75}{tauntaun-17}\;\overset{(vii)}{=}\,
\psfrag{i}{\hspace{-0.15cm}$H^{\tens n-2}$}
\psfrag{b}{$H$}
\psfrag{g}{$\hspace{-0.35cm}\tau_{n}(\varepsilon,u)$}
\rsdraw{0.55}{0.75}{tauntaun-18}\;\overset{(viii)}{=}\,
\psfrag{i}{\hspace{-0.15cm}$H^{\tens n}$}
\psfrag{b}{$H$}
\psfrag{r}{\hspace{-0.1cm}$H^{\tens n-1}$}
\psfrag{f}{$\hspace{-0.35cm}\tau_{n-1}(\varepsilon,u)$}
\rsdraw{0.55}{0.75}{tauntaun-19}\;.
\end{align*}
\endgroup
Here~$(i)$ follows by definition of~$\tau_n(\varepsilon,u)$ and~$\tau_{n-1}(\varepsilon,u)$,~$(ii)$ follows by inductive definition of left diagonal action,~$(iii)$ follows by the anti-multiplicativity of the antipode,~$(iv)$ follows by the fact that multiplication is an algebra morphism,~$(v)$ follows by the axiom of a module and the anti-comultiplicativity of the antipode,~$(vi)$ follows by the naturality of the braiding, the (co)associativity, and by the axiom of a module,~$(vii)$ follows by applying the antipode axiom twice and by the axiom of a module,~$(viii)$ follows by the fact that~$\varepsilon S=\varepsilon$, the naturality of the braiding, and definition of~$\tau_{n-1}(\varepsilon,u)$.
\end{proof}
The equalities from the following lemma show how the endomorphism $m(\id_H\tens \sigma)$ interacts with the paracocyclic operator $\tau_n(\varepsilon,u)$.
These equalities are intensively used while proving Formula \eqref{n+1thpower} by using Formula \eqref{kthpower} of Theorem \ref{powers}.
\begin{lem} \label{flow} We have:
\begingroup
\allowdisplaybreaks
\begin{enumerate}
\item[\textit{(a)}]  For any $n\ge 1$,
$$\,
\psfrag{j}{$H$}
\psfrag{e}{$H^{\tens n-1}$}
\psfrag{s}{$\sigma$}
\psfrag{p}{$H^{\tens n}$}
\psfrag{f}{\hspace{-0.5cm}$\tau_{n}(\varepsilon,u)$}
\rsdraw{0.55}{0.75}{flow2}\;=\,
\psfrag{m}{$H^{\tens n}$}
\psfrag{r}{$H^{\tens n}$}
\psfrag{s}{$\sigma$}
\psfrag{f}{$\hspace{-0.5cm}\tau_{n}(\varepsilon,u)$}
\rsdraw{0.55}{0.75}{flow3}\;.$$

\item[\textit{(b)}] For $2\le j\le n$,
$$\,
\psfrag{b}{$1$}
\psfrag{n}{$2$}
\psfrag{j}{$j$}
\psfrag{e}{$n$}
\psfrag{s}{$\sigma$}
\psfrag{p}{$H^{\tens n}$}
\psfrag{f}{$\hspace{0.45cm}\tau_{n}(\varepsilon,u)$}
\rsdraw{0.55}{0.75}{flow}\;=\,
\psfrag{b}{$1$}
\psfrag{n}{\hspace{-0.4cm}$n-1$}
\psfrag{r}{\hspace{-0.35cm}$j-1$}
\psfrag{e}{$n$}
\psfrag{s}{\hspace{-0.05cm}$\sigma$}
\psfrag{m}{$H^{\tens n}$}
\psfrag{f}{$\hspace{-0.5cm}\tau_{n}(\varepsilon,u)$}
\rsdraw{0.55}{0.75}{flow1}\;.$$
\end{enumerate}
\endgroup
\end{lem}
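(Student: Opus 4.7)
The plan is to reduce both identities to direct graphical computations that exploit the coalgebra morphism property of $\sigma$ (that is, $\Delta\sigma = \sigma\tens\sigma$ and $\varepsilon\sigma=\id_\uu$) together with the inductive definition of $\tau_n(\varepsilon,u)$, the naturality of the braiding, and the antipode axiom. Both equalities describe how a ``multiply by $\sigma$'' insertion can be transported across the paracocyclic operator; by inspection of the defining formula for $\tau_n(\varepsilon,u)$, the only obstacle to such a transport is a comultiplication on the first tensorand and the antipode $S$ that sits above it, and the coalgebra morphism property of $\sigma$ is tailored precisely for that situation.

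For part (a), I would first expand the left-hand side by substituting the definition of $\tau_n(\varepsilon,u)$ in terms of comultiplication on the first strand, left diagonal action on the remaining $n-1$ strands, and $S$ on the first strand. The $\sigma$-input meets the initial comultiplication and splits as $\sigma\tens\sigma$ by the coalgebra morphism axiom. One of the resulting copies is absorbed (via $S$) into what becomes the new first strand after the cyclic shift, while the other copy, by repeated use of naturality of the braiding and the level-exchange property, can be slid to its final position on the right-hand side. Gathering the pieces back into $\tau_n(\varepsilon,u)$ applied first yields the claim.

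For part (b), I would argue by downward induction on $j$, starting from $j=2$ where the argument is essentially the one in part (a) with the $\sigma$-insertion moved one strand over. For the inductive step, I would use Lemma~\ref{recurrencerelations} (with $\delta=\varepsilon$ and $\sigma=u$) to peel off one layer of $\tau_n(\varepsilon,u)$, then use naturality of the braiding and the coassociativity of $\Delta$ to commute the $\sigma$-insertion across one strand, reducing the position $j$ statement to the position $j-1$ statement already established. Along the way I would use the counitality relation $\varepsilon\sigma = \id_\uu$ to absorb the auxiliary $\varepsilon$-legs that appear in the formula for $\tau_n(\varepsilon,u)$ after a $\sigma$ has been slid past them.

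The main obstacle will be purely bookkeeping in the diagrammatic calculus: while the elementary moves (naturality of the braiding, duplication of $\sigma$ by $\Delta$, anti-comultiplicativity of $S$, and counitality) are all available from the preliminaries, chaining them together while correctly tracking positions among $n$ strands is delicate. A useful principle that will streamline the argument is that, once $\sigma$ is duplicated by a comultiplication, either of its copies can be transported across braidings by naturality and ultimately reassembled at the target location, so the precise path taken by $\sigma$ through the diagram does not affect the final morphism.
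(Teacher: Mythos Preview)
Your sketch for part~(a) is broadly on the right track, though the paper's argument orders things differently: it first unwinds the antipode on the product via anti-multiplicativity of $S$, then uses the bialgebra compatibility (comultiplication is an algebra morphism), then anti-comultiplicativity of $S$, and only at the end invokes that $\sigma$ is a coalgebra morphism. Your description (``$\sigma$ meets the initial comultiplication and splits'') front-loads the coalgebra property; this can be made to work too, but be aware that the antipode sitting above the first strand is the real obstacle, and the anti-(co)multiplicativity of $S$ is the key tool you omit to mention.

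Your approach to part~(b), however, is substantially more complicated than necessary and rests on a misconception. Part~(b) concerns the strands $j\ge 2$, which in $\tau_n(\varepsilon,u)$ are treated very differently from the first strand: they receive only a left multiplication (from the diagonal action of the comultiplied first strand) and are then shifted one position to the left via the braiding. In particular, no comultiplication and no antipode touches strand $j$. Hence multiplying $\sigma$ into strand $j$ before $\tau_n(\varepsilon,u)$ commutes past everything by plain associativity of $m$ and naturality of the braiding, and lands at position $j-1$ afterwards. This is exactly the paper's proof: unfold the definition, use associativity and naturality, done --- no induction, no Lemma~\ref{recurrencerelations}, and crucially no use of the coalgebra morphism property of $\sigma$ at all for this part. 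Your claim that the case $j=2$ is ``essentially the one in part~(a)'' is therefore incorrect: part~(a) is precisely about the exceptional first strand where the antipode and comultiplication live, and its argument does not transfer to $j\ge 2$.
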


\begin{proof} Let us prove the part~$(a)$. Indeed, we have:
\[\,
\psfrag{j}{$H$}
\psfrag{e}{\hspace{-0.5cm}$H^{\tens n-1}$}
\psfrag{s}{$\sigma$}
\psfrag{p}{$H^{\tens n}$}
\psfrag{f}{\hspace{-0.5cm}$\tau_{n}(\varepsilon,u)$}
\rsdraw{0.55}{0.75}{flow2}\;\overset{(i)}{=}\,
\psfrag{j}{$H$}
\psfrag{e}{\hspace{-0.5cm}$H^{\tens n-1}$}
\psfrag{s}{$\sigma$}
\rsdraw{0.55}{0.75}{flow4x}\;\overset{(ii)}{=}\,
\psfrag{j}{\hspace{-0.15cm}$H$}
\psfrag{e}{\hspace{-0.32cm}$H^{\tens n-1}$}
\psfrag{s}{$\sigma$}
\rsdraw{0.55}{0.75}{flow5}\;\overset{(iii)}{=}\,
\psfrag{j}{\hspace{-0.15cm}$H$}
\psfrag{e}{\hspace{-0.32cm}$H^{\tens n-1}$}
\psfrag{s}{$\sigma$}
\rsdraw{0.55}{0.75}{flow6}\;\overset{(iv)}{=}\,
\psfrag{j}{\hspace{-0.15cm}$H$}
\psfrag{e}{\hspace{-0.32cm}$H^{\tens n-1}$}
\psfrag{s}{$\sigma$}
\rsdraw{0.55}{0.75}{flow7}\;\overset{(v)}{=}\,
\psfrag{j}{\hspace{-0.15cm}$H$}
\psfrag{e}{\hspace{-0.32cm}$H^{\tens n-1}$}
\psfrag{s}{$\sigma$}
\rsdraw{0.55}{0.75}{flow8}\;\overset{(vi)}{=}
\,
\psfrag{m}{$H^{\tens n}$}
\psfrag{r}{$H^{\tens n}$}
\psfrag{s}{$\sigma$}
\psfrag{f}{$\hspace{-0.5cm}\tau_{n}(\varepsilon,u)$}
\rsdraw{0.55}{0.75}{flow3}\;. \]
Here~$(i)$ follows by definition of~$\tau_n(\varepsilon, u)$,~$(ii)$ follows by the anti-multiplicativity of antipode,~$(iii)$ follows by the naturality of the braiding and the fact that multiplication is an algebra morphism,~$(iv)$ follows by the anti-comultiplicativity of antipode,~$(v)$ follows by the fact that~$\sigma$ is a coalgebra morphism and by the axiom of a module and~$(vi)$ follows by definition of~$\tau_n(\varepsilon, u)$ and the naturality of the braiding.

Let us now show the part~$(b)$.
Indeed, by definition of~$\tau_n(\varepsilon,u)$ and the left diagonal action, by the naturality of the braiding, and the associativity, we have:
\[\,
\psfrag{b}{$1$}
\psfrag{n}{$2$}
\psfrag{j}{$j$}
\psfrag{e}{$n$}
\psfrag{s}{$\sigma$}
\psfrag{p}{$H^{\tens n}$}
\psfrag{f}{$\hspace{0.45cm}\tau_{n}(\varepsilon,u)$}
\rsdraw{0.55}{0.75}{flow}\;=
\,
\psfrag{b}{$1$}
\psfrag{i}{$2$}
\psfrag{e}{$j$}
\psfrag{r}{$n$}
\psfrag{s}{$\sigma$}
\rsdraw{0.55}{0.75}{flow9}\;=\,
\psfrag{b}{$1$}
\psfrag{i}{$2$}
\psfrag{e}{$j$}
\psfrag{r}{$n$}
\psfrag{s}{$\sigma$}
\rsdraw{0.55}{0.75}{flow10}\;=\,
\psfrag{b}{$1$}
\psfrag{n}{\hspace{-0.4cm}$n-1$}
\psfrag{r}{\hspace{-0.35cm}$j-1$}
\psfrag{e}{$n$}
\psfrag{s}{\hspace{-0.05cm}$\sigma$}
\psfrag{m}{$H^{\tens n}$}
\psfrag{f}{$\hspace{-0.5cm}\tau_{n}(\varepsilon,u)$}
\rsdraw{0.55}{0.75}{flow1}\;.\]
\end{proof}
Before passing to the proof of Theorem \ref{powers}, let us state another auxilary lemma.
\begin{lem} \label{auxilia} We have the following assertions:
\begin{itemize}
\item[\textit{(a)}]
The morphism $\,\psfrag{b}{$H$}
\psfrag{l}{\hspace{-0.07cm}$\delta$}
\rsdraw{0.55}{0.75}{adjaux}\;\co H\to H$ is a bialgebra morphism. \vspace{0.25cm}
\item[\textit{(b)}] For all $n\ge 1$,
$$\,\psfrag{b}{$1$}
\psfrag{e}{$n$}
\psfrag{h}{\hspace{-0.1cm}$H$}
\psfrag{n}{$H^{\tens n}$}
\psfrag{f}{$\id_{H^{\tens n}}$}
\psfrag{l}{\hspace{-0.05cm}$\delta$}
\rsdraw{0.55}{0.75}{adjauxadvanced0}\;=\,\psfrag{b}{$1$}
\psfrag{e}{$n$}
\psfrag{h}{\hspace{-0.1cm}$H$}
\psfrag{n}{$H^{\tens n}$}
\psfrag{f}{$\id_{H^{\tens n}}$}
\psfrag{l}{\hspace{-0.05cm}$\delta$}
\rsdraw{0.55}{0.75}{adjauxadvanced1}\;.$$
\item[\textit{(c)}] For all $n\ge 1$,
$$\,
\psfrag{r}{\hspace{-0.35cm}$H^{\tens n}$}
\psfrag{s}{$\sigma$}
\rsdraw{0.55}{0.75}{diagadj0}\;=\,
\psfrag{r}{\hspace{-0.35cm}$H^{\tens n}$}
\psfrag{s}{$\sigma$}
\rsdraw{0.55}{0.75}{diagadj1}\;.$$
\end{itemize}
\end{lem}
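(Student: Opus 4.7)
The plan is to prove the three parts in order, since (b) and (c) should follow from (a) by unfolding it across $n$ tensor factors using the recursive definitions of the diagonal action and the splitting identities for $\sigma$ recorded at the end of Section~\ref{diagonalacts}.

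For part (a), write $\phi\co H\to H$ for the morphism in question. The claim that $\phi$ is a bialgebra morphism reduces to four compatibility axioms: $\phi m=m(\phi\tens\phi)$, $\phi u=u$, $\Delta\phi=(\phi\tens\phi)\Delta$, and $\varepsilon\phi=\varepsilon$. Each of these will be verified by a graphical computation using the bialgebra compatibility between $m$ and $\Delta$, the hypothesis that $\delta$ is an algebra morphism (so that $\delta m=\delta\tens\delta$ and $\delta u=\id_\uu$), the naturality of the braiding, and the (co)unit and antipode axioms together with the anti-(co)multiplicativity of $S$ recalled in Section~\ref{hopfcat}. The pattern will essentially mirror the classical verification that a bialgebra map twisted by a character remains a bialgebra map.

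For part (b), I would induct on $n$. The base $n=1$ is either a reformulation of (a) or a direct consequence of it. For the induction step, the recursive definition of the (left and right) diagonal actions from Section~\ref{diagonalacts} allows me to peel off the outermost tensorand of $H^{\tens n}$ and rewrite the left-hand side in terms of a diagonal action on $H^{\tens n-1}$; using part (a) to push $\phi$ through the peeled-off factor and the inductive hypothesis on the remaining $H^{\tens n-1}$ then yields the right-hand side after some naturality-of-braiding reshuffling. For part (c), the cleanest route is to invoke the splitting formulas at the end of Section~\ref{diagonalacts}, which express the diagonal action/coaction on $\sigma$ as an $n$-fold tensor product of one-strand diagrams. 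Combined with the coalgebra morphism property of $\sigma$, the modular pair condition $\delta\sigma=\id_\uu$, and the antipode axiom, the $n$-fold identity reduces to $n$ parallel copies of a single-strand check.

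The main obstacle will be the bookkeeping of braidings in part (b): in a general (non-symmetric) braided category each crossing is a non-trivial morphism, and the induction must be organised so that after applying (a) on one strand and the inductive hypothesis on the remaining strands the crossings realign correctly. Isotopy invariance of the Penrose graphical calculus together with the naturality of the braiding should make this bookkeeping routine, but care is needed in drawing the diagrams and in choosing which strand to peel off at each step.
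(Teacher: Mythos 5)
Your strategy for (a) and (b) is the same as the paper's: part (a) is proved there by exactly the four graphical checks you list (compatibility with $m$, $u$, $\Delta$, $\varepsilon$, using the bialgebra axiom, the hypothesis that $\delta$ is an algebra morphism, anti-(co)multiplicativity of $S$, the antipode axiom and naturality of the braiding), and part (b) is proved by the same induction on $n$, peeling off one tensorand via the recursive definitions of the left diagonal action and the left coadjoint coaction and then invoking the $n=1$ case, the inductive hypothesis, and naturality. One calibration: the $n=1$ case of (b) is not a restatement of (a) — in the paper it is a separate computation that uses (a) together with coassociativity, the antipode axiom and (co)unitality — so ``direct consequence'' is the right description, ``reformulation'' is not. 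The genuine divergence is in (c): the paper argues by induction on $n$ (the case $n=1$ is immediate from the definition of the right adjoint action and the coalgebra-morphism property of $\sigma$; the step uses the recursive definitions of the two diagonal actions, anti-multiplicativity of $S$, that $\sigma$ is a coalgebra morphism, and naturality of the braiding), whereas you propose to split both sides at once via the factorization formulas of Sections \ref{diagonalacts} and \ref{adjact} and reduce to $n$ parallel one-strand identities. This route works and is arguably tidier — every crossing occurring in (c) involves a strand issued from $\uu$, so it can be removed by naturality, which is exactly why the braid bookkeeping you worry about for (b) is harmless here — but two adjustments are needed. First, the splitting formulas are stated for a coalgebra morphism $\uu \to H$, while in (c) the diagonally acting element also occurs composed with the antipode; you must therefore observe that $S\sigma$ is again a coalgebra morphism (anti-comultiplicativity of $S$ together with the triviality of the braiding on morphisms out of $\uu$), or redo that particular splitting by hand. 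Second, the modular pair condition $\delta\sigma=\id_\uu$ and the antipode axiom are not actually needed for (c) — the paper's proof never uses $\delta$ there — and the one-strand identity you are left with is precisely the paper's $n=1$ base case, so nothing beyond the coalgebra-morphism property of $\sigma$ and the definition of the right adjoint action is required.
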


The equality $(b)$ from Lemma \ref{auxilia} is intensively used while proving both of the equalities from Theorem \ref{powers}. The equality $(c)$ from Lemma \ref{auxilia} is particularly used in final steps of the computation of $\tau_n(\delta,\sigma)^{n+1}.$
\begin{proof} Let us first show the part $(a)$.
We first show that the morphism from part $(a)$ is an algebra morphism.
By using definition of the left coadjoint coaction, the bialgebra compatibility axiom, the fact that $\delta$ is an algebra morphism, the anti-multiplicativity of the antipode of $H$, and the naturality of the braiding, we have:
\[\,\psfrag{b}{$H$}
\psfrag{l}{\hspace{-0.07cm}$\delta$}
\rsdraw{0.55}{0.75}{adjaux1nolab}\;=\,\psfrag{b}{$H$}
\psfrag{l}{\hspace{-0.07cm}$\delta$}
\rsdraw{0.55}{0.75}{adjaux2nolab}\;=\,\psfrag{b}{$H$}
\psfrag{l}{\hspace{-0.07cm}$\delta$}
\rsdraw{0.55}{0.75}{adjaux3nolab}\;=\,\psfrag{b}{$H$}
\psfrag{l}{\hspace{-0.07cm}$\delta$}
\rsdraw{0.55}{0.75}{adjaux4nolab}\;=\,\psfrag{b}{$H$}
\psfrag{l}{\hspace{-0.07cm}$\delta$}
\rsdraw{0.55}{0.75}{adjaux5nolab}\;=\,\psfrag{b}{$H$}
\psfrag{l}{\hspace{-0.07cm}$\delta$}
\rsdraw{0.55}{0.75}{adjaux6nolab}\;=\,\psfrag{b}{$H$}
\psfrag{l}{\hspace{-0.07cm}$\delta$}
\rsdraw{0.55}{0.75}{adjaux7nolab}\;.\]
Similarly, by using definition of the left coadjoint coaction, the fact that unit is a coalgebra morphism, the fact that $\delta$ is an algebra morphism, and by the fact that $Su=u$, we have:
\[\,\psfrag{b}{$H$}
\psfrag{l}{\hspace{-0.07cm}$\delta$}
\rsdraw{0.55}{0.75}{adjaux8nolab}\;=\,\psfrag{b}{$H$}
\psfrag{l}{\hspace{-0.07cm}$\delta$}
\rsdraw{0.55}{0.75}{adjaux9nolab}\;=\,\psfrag{b}{$H$}
\psfrag{l}{\hspace{-0.07cm}$\delta$}
\rsdraw{0.55}{0.75}{adjaux10nolab}\;.\]

Let us now show that the morphism from $(a)$ is a coalgebra morphism. Indeed, by definition of the left coadjoint coaction, the fact that $\delta$ is an algebra morphism, the naturality of the braiding, the coassociativity, the antipode axiom, and the (co)unitality, we have:
$$
\,
\psfrag{l}{\hspace{-0.07cm}$\delta$}
\psfrag{b}{\hspace{-0.1cm}$H$}
\rsdraw{0.55}{0.75}{adjauxcoalgnolab}
\;=\,
\psfrag{l}{\hspace{-0.07cm}$\delta$}
\psfrag{b}{\hspace{-0.1cm}$H$}
\rsdraw{0.55}{0.75}{adjauxcoalg1nolab}
\;=\,
\psfrag{l}{\hspace{-0.07cm}$\delta$}
\psfrag{b}{\hspace{-0.1cm}$H$}
\rsdraw{0.55}{0.75}{adjauxcoalg2nolab}
\;=\,
\psfrag{l}{\hspace{-0.07cm}$\delta$}
\psfrag{b}{\hspace{-0.1cm}$H$}
\rsdraw{0.55}{0.75}{adjauxcoalg3nolab}
\;=\,
\psfrag{l}{\hspace{-0.07cm}$\delta$}
\psfrag{b}{\hspace{-0.1cm}$H$}
\rsdraw{0.55}{0.75}{adjauxcoalg4nolab}
\;.
$$
Furthermore, by definition of the left coadjoint coaction, the fact that $\delta$ is an algebra morphism, the naturality of the braiding, the (co)unitality, and the antipode axiom we have:
$$\,
\psfrag{l}{\hspace{-0.07cm}$\delta$}
\psfrag{b}{\hspace{-0.1cm}$H$}
\rsdraw{0.55}{0.75}{adjauxcoalg5nolab}
\;=\,
\psfrag{l}{\hspace{-0.07cm}$\delta$}
\psfrag{b}{\hspace{-0.1cm}$H$}
\rsdraw{0.55}{0.75}{adjauxcoalg6nolab}
\;=\,
\psfrag{l}{\hspace{-0.07cm}$\delta$}
\psfrag{b}{\hspace{-0.1cm}$H$}
\rsdraw{0.55}{0.75}{adjauxcoalg7nolab}
\;=\,
\psfrag{l}{\hspace{-0.07cm}$\delta$}
\psfrag{b}{\hspace{-0.1cm}$H$}
\rsdraw{0.55}{0.75}{adjauxcoalg8nolab}
\;.$$
This completes the proof of the part $(a)$.

Let us show the part $(b)$ by induction. For $n=1$, we prove the statement as follows. By using the part $(a)$, the definition of the coadjoint coaction, the fact that $\delta$ is an algebra morphism, the naturality of the braiding, the coassociativity, the antipode axiom, and the (co)unitality we have:
\[
\,
\psfrag{b}{$H$}
\psfrag{l}{\hspace{-0.07cm}$\delta$}
\rsdraw{0.55}{0.75}{keynaturality1nolab}\;=\,
\psfrag{b}{$H$}
\psfrag{l}{\hspace{-0.07cm}$\delta$}
\rsdraw{0.55}{0.75}{keynaturality2nolab}\;=\,
\psfrag{b}{$H$}
\psfrag{l}{\hspace{-0.07cm}$\delta$}
\rsdraw{0.55}{0.75}{keynaturality3nolab}\;=\,
\psfrag{b}{$H$}
\psfrag{l}{\hspace{-0.07cm}$\delta$}
\rsdraw{0.55}{0.75}{keynaturality4nolab}\;=\,
\psfrag{b}{$H$}
\psfrag{l}{\hspace{-0.07cm}$\delta$}
\rsdraw{0.55}{0.75}{keynaturality5nolab}\;
\]
Suppose that the statement is true for an $n\ge 1$ and let us show it for $n+1$. We have:
\begingroup
\allowdisplaybreaks
\begin{align*}
\,\psfrag{h}{\hspace{-0.1cm}$H$}
\psfrag{n}{\hspace{-0.3cm}$H^{\tens n+1}$}
\psfrag{l}{\hspace{-0.05cm}$\delta$}
\rsdraw{0.55}{0.75}{adjauxadvanced0}\;&\overset{(i)}{=}\,
\psfrag{h}{\hspace{-0.1cm}$H$}
\psfrag{b}{$H$}
\psfrag{n}{\hspace{-0.1cm}$H^{\tens n}$}
\psfrag{l}{\hspace{-0.05cm}$\delta$}
\rsdraw{0.55}{0.75}{adjauxadvanced2}\;\overset{(ii)}{=}\,
\psfrag{h}{\hspace{-0.1cm}$H$}
\psfrag{b}{$H$}
\psfrag{n}{\hspace{-0.1cm}$H^{\tens n}$}
\psfrag{l}{\hspace{-0.05cm}$\delta$}
\rsdraw{0.55}{0.75}{adjauxadvanced3}\;\overset{(iii)}{=}\,
\psfrag{h}{\hspace{-0.1cm}$H$}
\psfrag{b}{$H$}
\psfrag{n}{\hspace{-0.1cm}$H^{\tens n}$}
\psfrag{l}{\hspace{-0.05cm}$\delta$}
\rsdraw{0.55}{0.75}{adjauxadvanced4}\;\overset{(iv)}{=}\,
\psfrag{h}{\hspace{-0.1cm}$H$}
\psfrag{b}{$H$}
\psfrag{n}{\hspace{-0.1cm}$H^{\tens n}$}
\psfrag{l}{\hspace{-0.05cm}$\delta$}
\rsdraw{0.55}{0.75}{adjauxadvanced5}\;\overset{(v)}{=} \\
  &\overset{(v)}{=}\,
\psfrag{h}{\hspace{-0.1cm}$H$}
\psfrag{b}{$H$}
\psfrag{n}{\hspace{-0.1cm}$H^{\tens n}$}
\psfrag{l}{\hspace{-0.05cm}$\delta$}
\rsdraw{0.55}{0.75}{adjauxadvanced6}\;\overset{(vi)}{=}\,
\psfrag{h}{\hspace{-0.1cm}$H$}
\psfrag{b}{$H$}
\psfrag{n}{\hspace{-0.1cm}$H^{\tens n}$}
\psfrag{l}{\hspace{-0.05cm}$\delta$}
\rsdraw{0.55}{0.75}{adjauxadvanced7}\;\overset{(vii)}{=}\,
\psfrag{h}{\hspace{-0.1cm}$H$}
\psfrag{b}{$H$}
\psfrag{n}{\hspace{-0.1cm}$H^{\tens n}$}
\psfrag{l}{\hspace{-0.05cm}$\delta$}
\rsdraw{0.55}{0.75}{adjauxadvanced8}\;\overset{(viii)}{=}\,
\psfrag{h}{\hspace{-0.1cm}$H$}
\psfrag{b}{$H$}
\psfrag{n}{\hspace{-0.3cm}$H^{\tens n+1}$}
\psfrag{l}{\hspace{-0.05cm}$\delta$}
\rsdraw{0.55}{0.75}{adjauxadvanced1}\;,
\end{align*}
\endgroup
which shows the desired statement.
Here~$(i)$ and~$(viii)$ both follow by inductive definition of left diagonal action and left coadjoint coaction,~$(ii)$ follows by the fact that~$\delta$ is an algebra morphism,~$(iii)$ follows by the coassociativity,~$(iv)$ follows by the naturality of the braiding and the case~$n=1$,~$(v)$ and~$(vii)$ both follow by the naturality of the braiding and by the coassociativity and~$(vi)$ follows by the induction hypothesis.

Finally, we show the part~$(c)$ by induction.
For~$n=1$, the statement follows by definition of right adjoint action and the fact that~$\sigma$ is a coalgebra morphism.
Suppose that the statement is true for~$n\ge 1$ and let us show it for~$n+1$.
Indeed, by using inductive definition of the left and the right diagonal actions, the anti-multiplicativity of the antipode, the fact that~$\sigma$ is a coalgebra morphism, the naturality of the braiding, and the induction hypothesis, we have:
\[\,
\psfrag{r}{\hspace{-0.35cm}$H^{\tens n+1}$}
\psfrag{s}{$\sigma$}
\rsdraw{0.55}{0.75}{diagadj0}\;=\,
\psfrag{b}{$H$}
\psfrag{r}{\hspace{-0.35cm}$H^{\tens n}$}
\psfrag{s}{$\sigma$}
\rsdraw{0.55}{0.75}{diagadj2}\;=\,
\psfrag{b}{$H$}
\psfrag{r}{\hspace{-0.35cm}$H^{\tens n}$}
\psfrag{s}{$\sigma$}
\rsdraw{0.55}{0.75}{diagadj3}\;=\,
\psfrag{b}{$H$}
\psfrag{r}{\hspace{-0.35cm}$H^{\tens n}$}
\psfrag{s}{$\sigma$}
\rsdraw{0.55}{0.75}{diagadj4}\;=\,
\psfrag{r}{\hspace{-0.35cm}$H^{\tens n+1}$}
\psfrag{s}{$\sigma$}
\rsdraw{0.55}{0.75}{diagadj1}\;\]
\end{proof}
\begin{rmk} \label{twistedantipodesigma} \normalfont If $(\delta, \sigma)$ is a modular pair, then $\,
\psfrag{b}{$H$}
\psfrag{v}[tl][tl]{$\thicksim$}
\psfrag{c}{$\sigma$}
\rsdraw{0.35}{0.75}{twistedantipodesigmanolab}\;=
\,
\psfrag{b}{$H$}
\psfrag{c}{$\sigma$}
\rsdraw{0.35}{0.75}{twistedantipodesigma1nolab}\;.$ Indeed, this follows by the definition of $\tilde{S}$, the fact that $\sigma$ is a coalgebra morphism, and since the $(\delta,\sigma)$ is a modular pair:
\[\,
\psfrag{b}{$H$}
\psfrag{v}[tl][tl]{$\thicksim$}
\psfrag{c}{$\sigma$}
\rsdraw{0.45}{0.75}{twistedantipodesigmanolab}\;=\,
\psfrag{b}{$H$}
\psfrag{c}{$\sigma$}
\psfrag{l}{\hspace{-0.05cm}$\delta$}
\rsdraw{0.45}{0.75}{twistedantipodesigma2nolab}\;=\,
\psfrag{b}{$H$}
\psfrag{c}{$\sigma$}
\psfrag{l}{\hspace{-0.05cm}$\delta$}
\rsdraw{0.45}{0.75}{twistedantipodesigma3nolab}\;=\,
\psfrag{b}{$H$}
\psfrag{c}{$\sigma$}
\rsdraw{0.45}{0.75}{twistedantipodesigma1nolab}\;.\]
\end{rmk}
\subsection{Proof of Formula \eqref{kthpower}} \label{subsec: kthpower}

The proof of Formula \eqref{kthpower} of Theorem \ref{powers} is divided into several steps. For $n=k=2$, it suffices to calculate the square of $\tau_2(\delta, \sigma)$. For $n\ge 3$, we first calculate the square and then derive formulas for the remaining powers.
\subsubsection{Squares of $\tau_n(\delta,\sigma)$ for $n\ge 2$} \label{squares}
Let us first show that Formula \eqref{kthpower} is true in the case~$n=k=2$. Indeed, we have:
$$\left(\tau_2(\delta,\sigma)\right)^2\overset{(i)}{=}\,
\psfrag{b}{\hspace{-0.08cm}$H$}
\psfrag{i}{\hspace{-0.17cm}$H$}
\psfrag{v}[tl][tl]{$\thicksim$}
\psfrag{c}{$\sigma$}
\rsdraw{0.55}{0.75}{tau220nolab}\;\overset{(ii)}{=}\,
\psfrag{b}{\hspace{-0.08cm}$H$}
\psfrag{i}{\hspace{-0.17cm}$H$}
\psfrag{v}[tl][tl]{$\thicksim$}
\psfrag{c}{$\sigma$}
\rsdraw{0.55}{0.75}{tau221nolab}\;\overset{(iii)}{=}\,
\psfrag{b}{\hspace{-0.08cm}$H$}
\psfrag{i}{\hspace{-0.17cm}$H$}
\psfrag{v}[tl][tl]{$\thicksim$}
\psfrag{c}{$\sigma$}
\rsdraw{0.55}{0.75}{tau222nolab}\;\overset{(iv)}{=}\,
\psfrag{b}{\hspace{-0.08cm}$H$}
\psfrag{i}{\hspace{-0.17cm}$H$}
\psfrag{v}[tl][tl]{$\thicksim$}
\psfrag{c}{$\sigma$}
\rsdraw{0.55}{0.75}{tau223nolab}\;\overset{(v)}{=}\,
\psfrag{b}{\hspace{-0.08cm}$H$}
\psfrag{i}{\hspace{-0.17cm}$H$}
\psfrag{v}[tl][tl]{$\thicksim$}
\psfrag{c}{$\sigma$}
\rsdraw{0.55}{0.75}{tau224nolab}\;\overset{(vi)}{=}\,
\psfrag{b}{\hspace{-0.08cm}$H$}
\psfrag{i}{\hspace{-0.17cm}$H$}
\psfrag{v}[tl][tl]{$\thicksim$}
\psfrag{c}{$\sigma$}
\psfrag{f}{\hspace{-0.4cm}$\tau_1(\varepsilon,u)$}
\rsdraw{0.55}{0.75}{tau225nolab}\;.
$$ Here~$(i)$ follows by definition of~$\tau_2(\delta,\sigma)$,~$(ii)$ follows from Lemma~\ref{recurrencerelations}$(a)$ for~$n=2$ and since~$\tau_1(\varepsilon,u)=S$,~$(iii)$ follows by the fact that multiplication is an algebra morphism and associativity,~$(iv)$ by the anti-multiplicativity of the antipode,~$(v)$ follows by the associativity and the naturality of the braiding and~$(vi)$ follows by the antipode axiom, the naturality of the braiding, the (co)unitality, and the fact that~$\tau_1(\varepsilon,u)=S$.

From now on, let us assume that $n\ge 3$. Let us calculate the square of $\tau_n(\delta,\sigma)$. We have
\begingroup
\allowdisplaybreaks
\begin{align*}
   \left(\tau_n(\delta,\sigma)\right)^2&\overset{(i)}{=}\,
\psfrag{b}{$1$}
\psfrag{cx}{\hspace{-0.25cm}$n-1$}
\psfrag{v}[tl][tl]{$\thicksim$}
\psfrag{r}{\hspace{-0.1cm}$2$}
\psfrag{s}{$\sigma$}
\psfrag{n}{$n$}
\psfrag{d}{\hspace{-0.05cm}$H^{\tens n-2}$}
\psfrag{h}{\hspace{-0.9cm}$H^{\tens n-3}$}
\psfrag{l}{\hspace{-0.1cm}$\delta$}
\psfrag{f}{$\tau_{n-1}(\varepsilon,u)$}
\rsdraw{0.45}{0.75}{taun20DS}\;\overset{(ii)}{=}
\,
\psfrag{b}{$1$}
\psfrag{c}{$2$}
\psfrag{cx}{\hspace{-0.25cm}$n-1$}
\psfrag{v}[tl][tl]{$\thicksim$}
\psfrag{s}{$\sigma$}
\psfrag{n}{$n$}
\psfrag{d}{\hspace{-0.05cm}$H^{\tens n-2}$}
\psfrag{l}{\hspace{-0.1cm}$\delta$}
\psfrag{f}{$\tau_{n-1}(\varepsilon,u)$}
\rsdraw{0.45}{0.75}{taun21DS}\;  \overset{(iii)}{=} \,
\psfrag{b}{$1$}
\psfrag{c}{$2$}
\psfrag{cx}{\hspace{-0.25cm}$n-1$}
\psfrag{v}[tl][tl]{$\thicksim$}
\psfrag{s}{$\sigma$}
\psfrag{n}{$n$}
\psfrag{d}{\hspace{-0.05cm}$H^{\tens n-2}$}
\psfrag{h}{\hspace{-0.05cm}$H^{\tens n-3}$}
\psfrag{l}{\hspace{-0.1cm}$\delta$}
\psfrag{f}{$\tau_{n-1}(\varepsilon,u)$}
\rsdraw{0.45}{0.75}{taun22DS}\; \overset{(iv)}{=} \\
   &\overset{(iv)}{=} \,
\psfrag{b}{$1$}
\psfrag{c}{$2$}
\psfrag{cx}{\hspace{-0.25cm}$n-1$}
\psfrag{v}[tl][tl]{$\thicksim$}
\psfrag{s}{$\sigma$}
\psfrag{n}{$n$}
\psfrag{d}{$H^{\tens n-2}$}
\psfrag{h}{\hspace{-0.05cm}$H^{\tens n-3}$}
\psfrag{l}{\hspace{-0.1cm}$\delta$}
\psfrag{f}{$\tau_{n-1}(\varepsilon,u)$}
\psfrag{g}{$\tau_{n-2}(\varepsilon,u)$}
\rsdraw{0.45}{0.75}{taun23DS}\; \overset{(v)}{=} \,
\psfrag{b}{$1$}
\psfrag{c}{$2$}
\psfrag{cx}{\hspace{-0.25cm}$n-1$}
\psfrag{v}[tl][tl]{$\thicksim$}
\psfrag{s}{$\sigma$}
\psfrag{n}{$n$}
\psfrag{h}{\hspace{-0.05cm}$H^{\tens n-3}$}
\psfrag{l}{\hspace{-0.1cm}$\delta$}
\psfrag{f}{$\tau_{n-2}(\varepsilon,u)$}
\rsdraw{0.45}{0.75}{taun24DS}\; \overset{(vi)}{=} \,
\psfrag{b}{$1$}
\psfrag{c}{$2$}
\psfrag{cx}{\hspace{-0.25cm}$n-1$}
\psfrag{v}[tl][tl]{$\thicksim$}
\psfrag{s}{$\sigma$}
\psfrag{n}{$n$}
\psfrag{k}{\hspace{-0.2cm}$H^{\tens n-2}$}
\psfrag{l}{\hspace{-0.1cm}$\delta$}
\psfrag{f}{$\tau_{n-1}(\varepsilon,u)$}
\rsdraw{0.45}{0.75}{taun25DS}\;,
\end{align*}
\endgroup
which indeed shows Formula~\eqref{kthpower} in the case~$n\ge 3$ and $k=2$.
Here~$(i)$ follows by parts~$(a)$ and~$(b)$  of Lemma~\ref{recurrencerelations},~$(ii)$ follows by the associativity and from Lemma~\ref{squarelemma}$(a)$ for~$n-1$,~$(iii)$ follows by the associativity and from Lemma~\ref{squarelemma}$(b)$ for~$n-1$,~$(iv)$ follows from Lemma~\ref{squarelemma}$(c)$ for~$n-1$,~$(v)$ follows from Lemma~\ref{squarelemma}$(d)$ for~$n-1$ and~$(vi)$ follows by the co\-unitality, de\-fi\-ni\-tion of the twisted an\-ti\-pode~$\tilde{S}$, and from Lemma~\ref{recurrencerelations}$(b)$ applied on~$\delta=\varepsilon$ and~$\sigma=u$ for~$n-1$.

\subsubsection{Passing from $\left(\tau_n(\delta,\sigma)\right)^2$ to $\left(\tau_n(\delta,\sigma)\right)^3$, $n\ge 3$} \label{subsec: cube} From the calculation that has been done in Section \ref{squares}, we can easily deduce Formula \eqref{kthpower} for $n\ge 3$ and $k=3$.
\begingroup
\allowdisplaybreaks
\begin{align*}
  \left(\tau_n(\delta,\sigma)\right)^3 &\overset{(i)}{=} \,
\psfrag{b}{\hspace{-0.1cm}$1$}
\psfrag{c}{$2$}
\psfrag{cx}{\hspace{0.1cm}$3$}
\psfrag{v}[tl][tl]{$\thicksim$}
\psfrag{s}{$\sigma$}
\psfrag{n}{$n$}
\psfrag{k}{\hspace{-0.2cm}$H^{\tens n-2}$}
\psfrag{l}{\hspace{-0.1cm}$\delta$}
\psfrag{f}{$\tau_{n-1}(\varepsilon,u)$}
\rsdraw{0.45}{0.75}{taun30DS}\;\overset{(ii)}{=} \,
\psfrag{b}{\hspace{-0.1cm}$1$}
\psfrag{c}{$2$}
\psfrag{cx}{\hspace{0.1cm}$3$}
\psfrag{v}[tl][tl]{$\thicksim$}
\psfrag{s}{$\sigma$}
\psfrag{n}{$n$}
\psfrag{k}{\hspace{-0.05cm}$H^{\tens n-2}$}
\psfrag{l}{\hspace{-0.1cm}$\delta$}
\psfrag{f}{$\tau_{n-1}(\varepsilon,u)$}
\rsdraw{0.45}{0.75}{taun31bisDS}\;\overset{(iii)}{=}\,
\psfrag{b}{\hspace{-0.1cm}$1$}
\psfrag{c}{$2$}
\psfrag{cx}{\hspace{0.1cm}$3$}
\psfrag{v}[tl][tl]{$\thicksim$}
\psfrag{s}{$\sigma$}
\psfrag{n}{$n$}
\psfrag{k}{\hspace{-0.05cm}$H^{\tens n-2}$}
\psfrag{l}{\hspace{-0.1cm}$\delta$}
\psfrag{f}{$\tau_{n-1}(\varepsilon,u)$}
\rsdraw{0.45}{0.75}{taun32DS}\;\overset{(iv)}{=}\\
   &\overset{(iv)}{=}\,
\psfrag{b}{\hspace{-0.1cm}$1$}
\psfrag{c}{$2$}
\psfrag{cx}{\hspace{0.1cm}$3$}
\psfrag{v}[tl][tl]{$\thicksim$}
\psfrag{s}{$\sigma$}
\psfrag{n}{$n$}
\psfrag{k}{\hspace{-0.05cm}$H^{\tens n-3}$}
\psfrag{l}{\hspace{-0.1cm}$\delta$}
\psfrag{f}{$\tau_{n-1}(\varepsilon,u)$}
\rsdraw{0.45}{0.75}{taun33DS}\;\overset{(v)}{=}\,
\psfrag{b}{\hspace{-0.1cm}$1$}
\psfrag{c}{$2$}
\psfrag{cx}{\hspace{0.1cm}$3$}
\psfrag{v}[tl][tl]{$\thicksim$}
\psfrag{s}{$\sigma$}
\psfrag{n}{$n$}
\psfrag{k}{\hspace{-0.05cm}$H^{\tens n-3}$}
\psfrag{l}{\hspace{-0.1cm}$\delta$}
\psfrag{f}{$\left(\tau_{n-1}(\varepsilon,u)\right)^2$}
\psfrag{d}{\hspace{-0.15cm}$H^{\tens 2}$}
\rsdraw{0.45}{0.75}{taun34DS}\;.
\end{align*}
\endgroup
Here~$(i)$ follows by developing~$\tau_n(\delta,\sigma)^3$ as~$\tau_n(\delta,\sigma)^2\tau_n(\delta,\sigma)$, by definition of~$\tau_n(\delta,\sigma)$ and by the computation of~$\tau_n(\delta,\sigma)^2$ that has been done in~\ref{squares},~$(ii)$ follows by the coassociativity and Lemma~\ref{algebraicppties}$(f)$,~$(iii)$ follows by the naturality of the braiding and the coassociativity,~$(iv)$ follows from Lemma~\ref{flow}$(b)$ for~$j=n-1$ and from Lemma~\ref{algebraicppties}$(d)$,~$(v)$ follows by the naturality of the braiding and the definition of~$\tau_{n-1}(\varepsilon,u).$
\subsubsection{Computation of $\left(\tau_n(\delta,\sigma)\right)^{j+1}$, $2\le j \le n-1$}
Note that by now, we have completely shown Formula~\eqref{kthpower} in the cases~$n=2$ and~$n=3$. Also, the square and the cube of~$\tau_n(\delta,\sigma)$ are calculated for each~$n\ge 3$.
In this section, we finish the proof of~\eqref{kthpower}, by focusing on the case~$n\ge 4$.
As it has been already noted, for~$j=2$,~$\left(\tau_n(\delta,\sigma)\right)^{j+1}$ is already computed in Section~\ref{subsec: cube}.
From now on, we assume that~$3\le j \le n-1$.
If~\eqref{kthpower} is established for~$k=j$, then:
\begingroup
\allowdisplaybreaks
\begin{align*}
(\tau_n(\delta, \sigma))^{j+1} & \overset{(i)}{=} \,
\psfrag{b}{\hspace{-0.1cm}$1$}
\psfrag{c}{$2$}
\psfrag{cx}{\hspace{-0.25cm}$j-1$}
\psfrag{e}{$j$}
\psfrag{e+1}{$j+1$}
\psfrag{v}[tl][tl]{$\thicksim$}
\psfrag{s}{$\sigma$}
\psfrag{n}{$n$}
\psfrag{k}{\hspace{-0.05cm}$H^{\tens n-j}$}
\psfrag{l}{\hspace{-0.1cm}$\delta$}
\psfrag{f}{$\left(\tau_{n-1}(\varepsilon,u)\right)^{j-1}$}
\psfrag{d}{\hspace{-0.7cm}$H^{\tens j-1}$}
\rsdraw{0.45}{0.75}{taunj+10DS}\;\overset{(ii)}{=}\,
\psfrag{b}{\hspace{-0.1cm}$1$}
\psfrag{c}{$2$}
\psfrag{cx}{\hspace{-0.25cm}$j-1$}
\psfrag{e}{$j$}
\psfrag{e+1}{$j+1$}
\psfrag{v}[tl][tl]{$\thicksim$}
\psfrag{s}{$\sigma$}
\psfrag{n}{$n$}
\psfrag{k}{\hspace{-0.05cm}$H^{\tens n-j}$}
\psfrag{l}{\hspace{-0.1cm}$\delta$}
\psfrag{f}{$\left(\tau_{n-1}(\varepsilon,u)\right)^{j-1}$}
\psfrag{d}{\hspace{-0.7cm}$H^{\tens j-1}$}
\rsdraw{0.45}{0.75}{taunj+11DS}\;\overset{(iii)}{=} \\&\overset{(iii)}{=}\,
\psfrag{b}{\hspace{-0.1cm}$1$}
\psfrag{c}{$2$}
\psfrag{cx}{\hspace{-0.25cm}$j-1$}
\psfrag{e}{$j$}
\psfrag{e+1}{$j+1$}
\psfrag{v}[tl][tl]{$\thicksim$}
\psfrag{s}{$\sigma$}
\psfrag{n}{$n$}
\psfrag{k}{\hspace{-0.05cm}$H^{\tens n-j}$}
\psfrag{l}{\hspace{-0.1cm}$\delta$}
\psfrag{f}{$\left(\tau_{n-1}(\varepsilon,u)\right)^{j-1}$}
\psfrag{d}{\hspace{-0.7cm}$H^{\tens j-1}$}
\rsdraw{0.45}{0.75}{taunj+12DS}\;\overset{(iv)}{=}\,
\psfrag{b}{\hspace{-0.1cm}$1$}
\psfrag{c}{$2$}
\psfrag{cx}{\hspace{-0.25cm}$j-1$}
\psfrag{e}{$j$}
\psfrag{e+1}{$j+1$}
\psfrag{v}[tl][tl]{$\thicksim$}
\psfrag{s}{$\sigma$}
\psfrag{n}{$n$}
\psfrag{k}{\hspace{-0.05cm}$H^{\tens n-j-1}$}
\psfrag{l}{\hspace{-0.1cm}$\delta$}
\psfrag{f}{$\left(\tau_{n-1}(\varepsilon,u)\right)^{j-1}$}
\psfrag{d}{$H^{\tens j-1}$}
\rsdraw{0.45}{0.75}{taunj+13DS}\;\overset{(v)}{=} \\
&\overset{(v)}{=}\,
\psfrag{b}{\hspace{-0.1cm}$1$}
\psfrag{c}{$2$}
\psfrag{cx}{\hspace{-0.25cm}$j-1$}
\psfrag{e}{$j$}
\psfrag{e+1}{$j+1$}
\psfrag{v}[tl][tl]{$\thicksim$}
\psfrag{s}{$\sigma$}
\psfrag{n}{$n$}
\psfrag{k}{\hspace{-0.05cm}$H^{\tens n-j-1}$}
\psfrag{l}{\hspace{-0.1cm}$\delta$}
\psfrag{f}{$\left(\tau_{n-1}(\varepsilon,u)\right)^{j-1}$}
\psfrag{d}{\hspace{-0.4cm}$H^{\tens j}$}
\rsdraw{0.45}{0.75}{taunj+14DS}\;\overset{(vi)}{=}
\,
\psfrag{b}{\hspace{-0.1cm}$1$}
\psfrag{c}{$2$}
\psfrag{cx}{\hspace{-0.25cm}$j-1$}
\psfrag{e}{$j$}
\psfrag{e+1}{$j+1$}
\psfrag{v}[tl][tl]{$\thicksim$}
\psfrag{s}{$\sigma$}
\psfrag{n}{$n$}
\psfrag{k}{\hspace{-0.05cm}$H^{\tens n-j-1}$}
\psfrag{l}{\hspace{-0.1cm}$\delta$}
\psfrag{f}{$\left(\tau_{n-1}(\varepsilon,u)\right)^{j}$}
\psfrag{d}{\hspace{-0.4cm}$H^{\tens j}$}
\rsdraw{0.45}{0.75}{taunj+15DS}\;,
\end{align*}
\endgroup
which finishes the proof of~\eqref{kthpower}.
Here~$(i)$ follows by decomposing~$(\tau_n(\delta,\sigma))^{j+1}$ in the composition~$(\tau_n(\delta,\sigma))^j\tau_n(\delta,\sigma)$, by definition of~$\tau_n(\delta,\sigma)$ and by the hypothesis that~\eqref{kthpower} is established for~$k=j$,~$(ii)$ follows by the coassociativity and Lemma~\ref{algebraicppties}$(f)$,~$(iii)$ follows by the naturality of the braiding and the coassociativity,~$(iv)$ follows from~Lemma~\ref{auxilia}$(b)$ and by applying $j-1$ times Lemma \ref{flow}$(b)$,~$(v)$ follows from Lemma~\ref{algebraicppties}$(d)$ and the coassociativity and finally,~$(vi)$ follows by the naturality of the braiding and the definition of~$\tau_{n-1}(\varepsilon,u).$

\subsection{Proof of Formula \eqref{n+1thpower}} \label{finalstep}

In order to show Formula~\eqref{n+1thpower} of Theorem \ref{powers}, we will separately consider three cases:~$n=1$,~$n=2$, and~$n\ge 3$.

If~$n=1$, then we have
\[\left(\tau_1(\delta,\sigma)\right)^2\overset{(i)}{=}\,
\psfrag{b}{\hspace{-0.2cm}$H$}
\psfrag{c}{$\sigma$}
\psfrag{v}[tl][tl]{$\thicksim$}
\rsdraw{0.45}{0.75}{tau120DSnolab}\;\overset{(ii)}{=}\,
\psfrag{b}{\hspace{-0.2cm}$H$}
\psfrag{c}{$\sigma$}
\psfrag{v}[tl][tl]{$\thicksim$}
\rsdraw{0.45}{0.75}{tau121DSnolab}\;\overset{(iii)}{=}\,
\psfrag{b}{\hspace{-0.2cm}$H$}
\psfrag{c}{$\sigma$}
\psfrag{v}[tl][tl]{$\thicksim$}
\rsdraw{0.45}{0.75}{tau122DSnolab}\;\overset{(iv)}{=}\,
\psfrag{b}{\hspace{-0.2cm}$H$}
\psfrag{v}{$\sigma$}
\psfrag{l}{\hspace{-0.1cm}$\delta$}
\rsdraw{0.45}{0.75}{tau123DSnolab}\;.\]
Here~$(i)$ fol\-lows by de\-fi\-ni\-tion~$\tau_1(\delta,\sigma)$,~$(ii)$ fol\-lows from Le\-mma~\ref{algebraicppties}$(b)$,~$(iii)$ follows by the na\-tu\-ra\-li\-ty of the brai\-ding, the as\-so\-ci\-a\-ti\-vi\-ty, and Remark~\ref{twistedantipodesigma},~$(iv)$ follows by definition of right adjoint action, the fact that~$\sigma$ is a coalgebra morphism, and Lemma~\ref{algebraicppties}$(e)$.

If~$n=2$, then we have
\[\left(\tau_2(\delta,\sigma)\right)^3\overset{(i)}{=}\,
\psfrag{b}{\hspace{-0.2cm}$H$}
\psfrag{i}{\hspace{-0.2cm}$H$}
\psfrag{c}{$\sigma$}
\psfrag{v}[tl][tl]{$\thicksim$}
\rsdraw{0.55}{0.75}{tau230DSnolab}\;\overset{(ii)}{=}\,
\psfrag{b}{\hspace{-0.2cm}$H$}
\psfrag{i}{\hspace{-0.2cm}$H$}
\psfrag{c}{$\sigma$}
\psfrag{v}[tl][tl]{$\thicksim$}
\psfrag{l}{\hspace{-0.05cm}$\delta$}
\rsdraw{0.55}{0.75}{tau231DSnolab}\;\overset{(iii)}{=}\,
\psfrag{b}{\hspace{-0.2cm}$H$}
\psfrag{i}{\hspace{-0.2cm}$H$}
\psfrag{c}{$\sigma$}
\psfrag{v}[tl][tl]{$\thicksim$}
\psfrag{l}{\hspace{-0.05cm}$\delta$}
\rsdraw{0.55}{0.75}{tau232DSnolab}\;\hspace{-0.4cm}\overset{(iv)}{=}\,
\psfrag{b}{\hspace{-0.2cm}$H$}
\psfrag{i}{\hspace{-0.2cm}$H$}
\psfrag{g}{\hspace{-1.1cm}$\left(\tau_1(\delta, \sigma)\right)^2$}
\psfrag{c}{$\sigma$}
\psfrag{v}[tl][tl]{$\thicksim$}
\psfrag{l}{\hspace{-0.1cm}$\delta$}
\rsdraw{0.55}{0.75}{tau233DSnolab}\;\overset{(v)}{=}
\,
\psfrag{b}{\hspace{-0.2cm}$H$}
\psfrag{i}{\hspace{-0.2cm}$H$}
\psfrag{g}{\hspace{-1.1cm}$\left(\tau_1(\delta, \sigma)\right)^2$}
\psfrag{c}{$\sigma$}
\psfrag{v}[tl][tl]{$\thicksim$}
\psfrag{l}{$\delta$}
\rsdraw{0.55}{0.75}{tau234DSnolab}\;.
\]
Here~$(i)$ follows by expanding~$(\tau_2(\delta,\sigma))^3$ as~$(\tau_2(\delta,\sigma))^2\tau_2(\delta,\sigma)$, using the definition of~$\tau_2(\delta,\sigma)$, and Formula~\eqref{kthpower} in the case~$n=k=2$,~$(ii)$ follows from Lemma~\ref{algebraicppties}$(f)$,~$(iii)$ by using the fact that~$\sigma$ is a coalgebra morphism,~$(iv)$ by the naturality of the braiding, definition of~$\tau_1(\delta,\sigma)$, and the anti-multiplicativity of the antipode,~$(v)$ follows by the naturality of the braiding, by definition of right adjoint action, and the case~$n=1$ of Formula~$\eqref{n+1thpower}$, which is shown above.

From now on, let us assume that $n\ge 3$. We have
\begingroup
\allowdisplaybreaks
\begin{align*}\left(\tau_n(\delta,\sigma)\right)^{n+1}\overset{(i)}{=}\,
\psfrag{b}{$1$}
\psfrag{r}{\hspace{-0.05cm}$2$}
\psfrag{f}{\hspace{-1cm}$\left(\tau_{n-1}(\varepsilon,u)\right)^{n-1}$}
\psfrag{g}{\hspace{-0.35cm}$\tau_{n-1}(\varepsilon, u)$}
\psfrag{cx}{$n-1$}
\psfrag{n}{$n$}
\psfrag{s}{$\sigma$}
\psfrag{v}[tl][tl]{$\thicksim$}
\psfrag{l}{$\delta$}
\psfrag{d}{\hspace{-0.8cm}$H^{\tens n-1}$}
\rsdraw{0.55}{0.75}{taunn+10DS}\;&\overset{(ii)}{=} \,
\psfrag{b}{$1$}
\psfrag{f}{\hspace{-1cm}$\left(\tau_{n-1}(\varepsilon,u)\right)^{n-1}$}
\psfrag{g}{\hspace{-0.35cm}$\tau_{n-1}(\varepsilon, u)$}
\psfrag{r}{\hspace{-0.05cm}$2$}
\psfrag{cx}{$n-1$}
\psfrag{n}{$n$}
\psfrag{s}{$\sigma$}
\psfrag{v}[tl][tl]{$\thicksim$}
\psfrag{l}{$\delta$}
\psfrag{d}{\hspace{-0.8cm}$H^{\tens n-1}$}
\rsdraw{0.55}{0.75}{taunn+11DS}\; \overset{(iii)}{=} \,
\psfrag{b}{$1$}
\psfrag{f}{\hspace{-1cm}$\left(\tau_{n-1}(\varepsilon,u)\right)^{n-1}$}
\psfrag{g}{\hspace{-0.35cm}$\tau_{n-1}(\varepsilon, u)$}
\psfrag{cx}{$n-1$}
\psfrag{r}{\hspace{-0.05cm}$2$}
\psfrag{n}{$n$}
\psfrag{s}{$\sigma$}
\psfrag{v}[tl][tl]{$\thicksim$}
\psfrag{l}{$\delta$}
\psfrag{d}{\hspace{-0.8cm}$H^{\tens n-1}$}
\rsdraw{0.55}{0.75}{taunn+12DS}\;\overset{(iv)}{=} &\\ \overset{(iv)}{=}
 \,
\psfrag{b}{$1$}
\psfrag{i}{\hspace{-0.05cm}$2$}
\psfrag{f}{\hspace{-1.35cm}$\left(\tau_{n-1}(\varepsilon,u)\right)^{n-1}$}
\psfrag{g}{\hspace{-1.1cm}$\left(\tau_{1}(\delta, \sigma)\right)^2$}
\psfrag{cx}{\hspace{-0.05cm}$n-1$}
\psfrag{n}{\hspace{-0.05cm}$n$}
\psfrag{s}{$\sigma$}
\psfrag{v}[tl][tl]{$\thicksim$}
\psfrag{l}{\hspace{-0.1cm}$\delta$}
\psfrag{d}{\hspace{-0.8cm}$H^{\tens n-1}$}
\rsdraw{0.55}{0.75}{taunn+13DS}\; &\overset{(v)}{=} \,
\psfrag{b}{$1$}
\psfrag{i}{\hspace{-0.05cm}$2$}
\psfrag{f}{\hspace{-1.35cm}$\left(\tau_{n-1}(\varepsilon,u)\right)^{n-2}$}
\psfrag{h}{\hspace{-0.8cm}$\tau_{n-1}(\varepsilon,u)$}
\psfrag{g}{\hspace{-1.1cm}$\left(\tau_{1}(\delta, \sigma)\right)^2$}
\psfrag{k}{\hspace{-0.2cm}$H^{\tens n-2}$}
\psfrag{cx}{\hspace{-0.05cm}$n-1$}
\psfrag{n}{\hspace{-0.05cm}$n$}
\psfrag{s}{$\sigma$}
\psfrag{v}[tl][tl]{$\thicksim$}
\psfrag{l}{\hspace{-0.1cm}$\delta$}
\psfrag{d}{\hspace{-0.8cm}$H^{\tens n-1}$}
\rsdraw{0.55}{0.75}{taunn+14DS}\; \overset{(vi)}{=} \,
\psfrag{b}{$1$}
\psfrag{i}{\hspace{-0.05cm}$2$}
\psfrag{f}{\hspace{-1.35cm}$\left(\tau_{n-1}(\varepsilon,u)\right)^{n-2}$}
\psfrag{h}{\hspace{-0.8cm}$\tau_{n-1}(\varepsilon,u)$}
\psfrag{g}{\hspace{-1.1cm}$\left(\tau_{1}(\delta, \sigma)\right)^2$}
\psfrag{k}{\hspace{-0.2cm}$H^{\tens n-1}$}
\psfrag{cx}{\hspace{-0.05cm}$n-1$}
\psfrag{n}{\hspace{-0.05cm}$n$}
\psfrag{s}{$\sigma$}
\psfrag{v}[tl][tl]{$\thicksim$}
\psfrag{l}{\hspace{-0.1cm}$\delta$}
\psfrag{d}{\hspace{-0.8cm}$H^{\tens n-1}$}
\rsdraw{0.55}{0.75}{taunn+15DS}\;\overset{(vii)}{=}  &\\  &\overset{(vii)}{=} \,
\psfrag{b}{$1$}
\psfrag{i}{\hspace{-0.05cm}$2$}
\psfrag{f}{\hspace{-1.15cm}$\left(\tau_{n-1}(\varepsilon,u)\right)^{n}$}
\psfrag{h}{\hspace{-0.8cm}$\tau_{n-1}(\varepsilon,u)$}
\psfrag{g}{\hspace{-1.1cm}$\left(\tau_{1}(\delta, \sigma)\right)^2$}
\psfrag{k}{\hspace{-0.2cm}$H^{\tens n-2}$}
\psfrag{cx}{\hspace{-0.05cm}$n-1$}
\psfrag{n}{\hspace{-0.05cm}$n$}
\psfrag{s}{$\sigma$}
\psfrag{v}[tl][tl]{$\thicksim$}
\psfrag{l}{\hspace{-0.1cm}$\delta$}
\psfrag{d}{\hspace{-0.8cm}$H^{\tens n-1}$}
\rsdraw{0.55}{0.75}{taunn+16DS}\;,&
\end{align*}\endgroup
which shows Formula~\eqref{n+1thpower}.
Here~$(i)$ follows by decomposing~$(\tau_n(\delta,\sigma))^{n+1}$ in the composition~$(\tau_n(\delta,\sigma))^n\tau_n(\delta,\sigma)$, by using Lemma~\ref{recurrencerelations}$(b)$, and Formula~\eqref{kthpower} for~$k=n$,~$(ii)$ follows from Lemma~\ref{algebraicppties}$(f)$,~$(iii)$ by the fact that~$\sigma$ is a coalgebra morphism,~$(iv)$ by defi\-nition of~$\tau_{n-1}(\varepsilon,u)$, the naturality of the braiding, definition of~$\tau_1(\delta,\sigma)$, and the coassociati\-vi\-ty,~$(v)$ by definition of the twisted antipode, Lemma~\ref{auxilia}$(b)$, and by applying~$n-2$ times Lemma~\ref{flow}$(b)$,~$(vi)$ follows by the naturality of the braiding, by combining Lemma~\ref{flow}$(a)$ with Lemma~\ref{auxilia}$(c)$ for~$n-1$, by the coassociativity, and Lemma~\ref{algebraicppties}$(d)$,~$(vii)$ follows by the naturality of the braiding and definition of~$\tau_{n-1}(\varepsilon,u)$.

\section{Proof of Theorem \ref*{CMtrace}} \label{pfCMtrace}
In order to show that the family~$\{\alpha_n \co H^{\tens n} \to C^{\tens n+1}\}_{n\in \N}$ is a morphism between the paracocyclic objects~$\textbf{CM}_\bullet(H,\delta,\sigma)$ and~$\textbf{C}_\bullet(C)$ in $\mc B$, we will directly check that
\begin{align}
\label{commfacCM}\alpha_n \delta_i^n(\sigma)&= \delta_i^n \alpha_{n-1} \quad \text{for} \quad 0\le i \le n, n\ge 1,  \\
\label{commdegCM}\alpha_{n}\sigma_j^n&= \sigma_j^n \alpha_{n+1}\quad \text{for} \quad 0\le j \le n, n\ge 0, \\
\label{commcycCM}\alpha_{n}\tau_n(\delta,\sigma) &= \tau_n \alpha_n\quad \text{for} \quad n\ge 0.
\end{align}
Note that we abusively use the same notation for cofaces, codegeneracies, and paracocyclic operators of two different constructions. These should be understood from context.
Roughly described, the equalities $\eqref{commfacCM}$ and $\eqref{commdegCM}$ follow by the fact that $C$ is a coalgebra in the category of right $H$-modules.
In order to show the equality \eqref{commcycCM} in the case $n\ge 2$, we will need the following computation:

\begin{lem}\label{kindaflow} If $n\ge 2$, then
$$\,
\psfrag{b}{\hspace{-0.15cm}$H$}
\psfrag{A}{$\color{red}C$}
\psfrag{e}{\hspace{-0.07cm}$H^{\tens n}$}
\psfrag{id}{\hspace{-0.32cm}$\id_{H^{\tens n}}$}
\rsdraw{0.55}{0.75}{flowtypelemma}  \;=\,
\psfrag{b}{$1$}
\psfrag{A}{$\color{red}C$}
\psfrag{e}{$2$}
\psfrag{n}{\hspace{-0.07cm}$n$}
\psfrag{id}{\hspace{-0.32cm}$\id_{H^{\tens n}}$}
\rsdraw{0.55}{0.75}{flowtypelemma2}  \;.
$$
\end{lem}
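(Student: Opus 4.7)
The plan is to prove Lemma~\ref{kindaflow} by induction on $n \ge 2$. The key ingredients will be the fact that $C$ is a coalgebra in the category of right $H$-modules (so the comultiplication $\Delta_C \co C \to C \tens C$ is $H$-linear with respect to the monoidal structure on $\Mod_H$, i.e.\ the diagonal action), the naturality of the braiding, the coassociativity of $\Delta_C$, and the inductive definitions of the diagonal actions recalled in Section~\ref{diagonalacts}.

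For the base case $n=2$, I would expand the diagonal action of $H^{\tens 2}$ on $C$ appearing on the left hand side using its inductive definition. This produces a configuration in which the incoming $H$-strand is comultiplied and braided past the neighboring $H$-strand before acting on $C$. I would then apply the $H$-linearity of $\Delta_C$ to commute the comultiplication of $C$ past the action of $H$; this splits the $H$-strand via $\Delta$ and makes it act diagonally on $C\tens C$. A final rearrangement using the naturality of the braiding and coassociativity of $\Delta_C$ should yield the right hand side.

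For the inductive step, assuming the equality for some $n\ge 2$ and proving it for $n+1$, I would first use the inductive definition of the right diagonal action of $H^{\tens n+1}$ on $C$ to isolate the last (or first) $H$-tensorand; this exhibits the left hand side as an action of one $H$-factor composed with the diagonal action of the remaining $H^{\tens n}$. Applying the $H$-linearity of $\Delta_C$ once to push $\Delta_C$ through that single action reduces the problem to an instance of the statement for $n$, which is provided by the induction hypothesis. A final manipulation using naturality of the braiding, coassociativity, and the inductive definition of the diagonal action on $H^{\tens n+1}$ then matches the right hand side.

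The main obstacle I anticipate is the careful bookkeeping of braiding crossings. Each layer of the diagonal action introduces a braiding of the incoming $H$ past several $H$-strands, and pushing $\Delta_C$ through these via the module axiom creates additional crossings that must be realigned to the canonical shape on the right hand side. Keeping track of the precise position at which each comultiplication is inserted, and ensuring that the iterated use of coassociativity collapses correctly, will be the delicate point; the combinatorics will nevertheless be rigidly controlled by the inductive definition of the diagonal action together with the $H$-linearity of $\Delta_C$.
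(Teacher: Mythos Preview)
Your approach is essentially the paper's: induction on $n$, with the base case and inductive step both driven by the $H$-linearity of $\Delta_C$, the right module axiom, and naturality of the braiding. Two small corrections: the ``diagonal action'' appearing on the left hand side is the \emph{left diagonal action of $H$ on $H^{\tens n}$} from Section~\ref{diagonalacts}, not an action of $H^{\tens n}$ on $C$ (the only action on $C$ is the single right $H$-action $r$); and coassociativity of $\Delta_C$ is not actually needed --- the paper's proof uses only the module axiom, naturality of the braiding, and $H$-linearity of $\Delta_C$. With that terminology straightened out, your inductive step (peel off one $H$-factor via the inductive definition of the left diagonal action, apply the induction hypothesis, then use the module axiom and $H$-linearity of $\Delta_C$ to absorb the remaining factor) matches the paper exactly.
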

\begin{proof}
We prove the claim by induction. Let us first show it for~$n=2$.
Indeed, by the right module axiom, the naturality of braiding and the fact that the comultiplication~$\Delta_C\co C\to C\tens C$ is an~$H$-linear morphism, we have
$$\,
\psfrag{b}{\hspace{-0.15cm}$H$}
\psfrag{A}{$C$}
\rsdraw{0.55}{0.75}{flowtypelemma3nolab}  \;=\,
\psfrag{b}{\hspace{-0.15cm}$H$}
\psfrag{A}{$C$}
\rsdraw{0.55}{0.75}{flowtypelemma4nolab}  \;=\,
\psfrag{b}{\hspace{-0.15cm}$H$}
\psfrag{A}{$C$}
\rsdraw{0.55}{0.75}{flowtypelemma5nolab}  \;.$$
Suppose that the claim is true for an~$n\ge 2$ and let us show it for~$n+1$.
We have
\begingroup
\allowdisplaybreaks
\begin{align*}\,
\psfrag{b}{\hspace{-0.15cm}$H$}
\psfrag{A}{$\color{red} C$}
\psfrag{e}{\hspace{-0.07cm}$H^{\tens n+1}$}
\psfrag{id}{\hspace{-0.3cm}$\id_{H^{\tens n}}$}
\rsdraw{0.55}{0.75}{flowtypelemma}  \;&\overset{(i)}{=}
\,
\psfrag{b}{\hspace{-0.15cm}$H$}
\psfrag{A}{$\color{red}C$}
\psfrag{id}{\hspace{-0.3cm}$\id_{H^{\tens n}}$}
\psfrag{e}{\hspace{-0.07cm}$H^{\tens n}$}
\rsdraw{0.55}{0.75}{flowtypelemma6}  \;\overset{(ii)}{=}
\,
\psfrag{b}{\hspace{-0.02cm}$1$}
\psfrag{A}{$\color{red}C$}
\psfrag{e}{$3$}
\psfrag{n}{\hspace{-0.5cm}$n+1$}
\psfrag{l}{\hspace{-0.07cm}$2$}
\rsdraw{0.55}{0.75}{flowtypelemma7}  \;\overset{(iii)}{=} \\ &\overset{(iii)}{=}
\,
\psfrag{b}{\hspace{-0.13cm}$1$}
\psfrag{A}{$\color{red}C$}
\psfrag{e}{$3$}
\psfrag{n}{\hspace{-0.5cm}$n+1$}
\psfrag{l}{\hspace{-0.07cm}$2$}
\rsdraw{0.55}{0.75}{flowtypelemma8}  \;\overset{(iv)}{=}\,
\psfrag{b}{\hspace{0.08cm}$1$}
\psfrag{A}{$\color{red}C$}
\psfrag{e}{$2$}
\psfrag{n}{\hspace{-0.4cm}$n+1$}
\psfrag{id}{\hspace{-0.32cm}$\id_{H^{\tens n}}$}
\rsdraw{0.55}{0.75}{flowtypelemma2}  \;, \end{align*}
\endgroup
which in\-deed proves the claim for $n+1$.
Here $(i)$ fol\-lows by the in\-duc\-tive de\-fi\-ni\-tion of the left di\-a\-go\-nal ac\-tion, $(ii)$ follows by the in\-duc\-tion hy\-po\-the\-sis, $(iii)$ follows by the right mo\-dule a\-xi\-om and $(iv)$ fol\-lows by the na\-tu\-ra\-li\-ty of the brai\-ding and the fact that the comul\-ti\-pli\-ca\-tion~$\Delta_C\co C\to C\tens C$ is an~$H$-li\-ne\-ar mor\-phism.
\end{proof}

\subsection{Proof of the equality \eqref{commfacCM}}
Let us show the equality~\eqref{commfacCM}.
If~$n=1$ and~$i=0$, then the equality~\eqref{commfacCM} writes as~$\alpha_1\delta_0^1=\delta_0^1\alpha_0,$ which follows by definitions and the right module axiom.
Let~$n=i=1$. In this case, the equality~\eqref{commfacCM} writes as~$\alpha_1\delta_1^1=\delta_1^1\alpha_0$, which is exactly the condition that~$\alpha\co \uu \to C$ is a~$\sigma$-trace. This shows the equality~\eqref{commfacCM} for~$n=1$ and~$0\le i \le 1$.

Now let $n\ge 2$ and $i=0$. By definitions, the naturality of the braiding, the axiom of a right module, and the coassociativity, we have:
$$\alpha_n\delta_0^n =
\, \psfrag{b}{\hspace{0.1cm}$1$}
\psfrag{A}{\hspace{-0.1cm}$C$}
\psfrag{e}{\hspace{-0.3cm}$n-1$}
\psfrag{g}{$\color{red}\alpha$}
\rsdraw{0.55}{0.75}{CMcommfac0} \; =\, \psfrag{b}{\hspace{0.1cm}$1$}
\psfrag{A}{\hspace{-0.1cm}$C$}
\psfrag{e}{\hspace{-0.3cm}$n-1$}
\psfrag{g}{$\color{red}\alpha$}
\rsdraw{0.55}{0.75}{CMcommfac1} \;=\, \psfrag{b}{\hspace{0.1cm}$1$}
\psfrag{A}{\hspace{-0.1cm}$C$}
\psfrag{e}{\hspace{-0.3cm}$n-1$}
\psfrag{g}{$\color{red}\alpha$}
\rsdraw{0.55}{0.75}{CMcommfac2} \; = \delta_0^n\alpha_{n-1}.$$
Let~$n\ge 2$ and~$1\le i \le n-1$.
In this case, equation \eqref{commfacCM} follows by definitions, the coassociativity, and the fact that~$\Delta_C\co C\to C\tens C$ is an~$H$-linear morphism:
\begingroup
\allowdisplaybreaks
\begin{align*}\alpha_n\delta_i^n&=\, \psfrag{b}{\hspace{-0.05cm}$1$}
\psfrag{A}{\hspace{-0.1cm}$C$}
\psfrag{j}{$i$}
\psfrag{e}{\hspace{-0.3cm}$n-1$}
\psfrag{g}{$\color{red}\alpha$}
\rsdraw{0.55}{0.75}{CMcommfac3} \;= \, \psfrag{b}{\hspace{-0.05cm}$1$}
\psfrag{A}{\hspace{-0.1cm}$C$}
\psfrag{j}{$i$}
\psfrag{e}{\hspace{-0.3cm}$n-1$}
\psfrag{g}{$\color{red}\alpha$}
\rsdraw{0.55}{0.75}{CMcommfac4} \; \\&=\, \psfrag{b}{\hspace{-0.05cm}$1$}
\psfrag{A}{\hspace{-0.1cm}$C$}
\psfrag{j}{$i$}
\psfrag{e}{\hspace{-0.3cm}$n-1$}
\psfrag{g}{$\color{red} \alpha$}
\rsdraw{0.55}{0.75}{CMcommfac5} \; =\delta_i^n\alpha_{n-1}.\end{align*}
\endgroup
Finally, let~$n\ge 2$ and~$i=n$.
Equation~\eqref{commfacCM} in this case follows by the case~$n\ge 2$ and~$i=0$, which is written above, the equation~\eqref{commcycCM}, which is proven in Section \ref{proofcommcyccm}, and by the paracyclic compatibility relation~$\tau_n\delta_0^n=\delta_n^n$ (see Section \ref{paracyccat}).
Indeed, we have
\begin{align*}
\alpha_n\delta_n^n&=\alpha_n(\tau_n(\delta,\sigma)\delta_0^n)=(\alpha_n\tau_n(\delta,\sigma))\delta_0^n=(\tau_n\alpha_n)\delta_0^n=\\
&=\tau_n(\alpha_n\delta_0^n)=\tau_n(\delta_0^n\alpha_{n-1})=(\tau_n\delta_0^n)\alpha_{n-1}=\delta_n^n\alpha_{n-1}.
\end{align*}

\subsection{Proof of the equality \eqref{commdegCM}} Let us show the equality~\eqref{commdegCM}. We consider the three following cases:~$j=0$,~$1\le j\le n-1$, and~$j=n$.
In each case, the desired equality follows by definition, the counitality, the fact that the counit~$\varepsilon_C\co C\to \uu$ is an~$H$-linear morphism, and the naturality of the braiding.

Indeed, if $j=0$, then we have
$$\alpha_n\sigma_0^n=\,
\psfrag{b}{\hspace{-0.1cm}$1$}
\psfrag{A}{$C$}
\psfrag{e}{\hspace{-0.1cm}$n$}
\psfrag{n}{\hspace{-0.1cm}$0$}
\psfrag{g}{$\color{red}\alpha$}
\rsdraw{0.55}{0.75}{CMcommdeg3}  \; = \,
\psfrag{b}{\hspace{-0.1cm}$1$}
\psfrag{A}{$C$}
\psfrag{e}{\hspace{-0.1cm}$n$}
\psfrag{n}{\hspace{-0.1cm}$0$}
\psfrag{g}{$\color{red}\alpha$}
\rsdraw{0.55}{0.75}{CMcommdeg4}  \;= \,
\psfrag{b}{\hspace{-0.1cm}$1$}
\psfrag{A}{$C$}
\psfrag{e}{\hspace{-0.1cm}$n$}
\psfrag{n}{\hspace{-0.1cm}$0$}
\psfrag{g}{$\color{red}\alpha$}
\rsdraw{0.55}{0.75}{CMcommdeg5}  \; = \sigma_0^n\alpha_{n+1}.$$
If $1\le j \le n-1$, then
\begingroup
\allowdisplaybreaks
\begin{align*}\alpha_n\sigma_j^n&=\,
\psfrag{b}{\hspace{-0.1cm}$1$}
\psfrag{A}{$C$}
\psfrag{e}{$n+1$}
\psfrag{i}{\hspace{-0.55cm}$j+1$}
\psfrag{n}{\hspace{-0.2cm}$j$}
\psfrag{m}{\hspace{-0.2cm}$j+2$}
\psfrag{g}{$\color{red}\alpha$}
\rsdraw{0.55}{0.75}{CMcommdeg6}  \;=\,
\psfrag{b}{\hspace{-0.1cm}$1$}
\psfrag{A}{$C$}
\psfrag{e}{$n+1$}
\psfrag{i}{\hspace{-0.55cm}$j+1$}
\psfrag{n}{\hspace{-0.2cm}$j$}
\psfrag{m}{\hspace{-0.2cm}$j+2$}
\psfrag{g}{$\color{red}\alpha$}
\rsdraw{0.55}{0.75}{CMcommdeg7}  \; = \\ &= \,
\psfrag{b}{\hspace{-0.1cm}$1$}
\psfrag{A}{$C$}
\psfrag{e}{$n+1$}
\psfrag{i}{\hspace{-0.55cm}$j+1$}
\psfrag{n}{\hspace{-0.2cm}$j$}
\psfrag{m}{\hspace{-0.2cm}$j+2$}
\psfrag{g}{$\color{red}\alpha$}
\rsdraw{0.55}{0.75}{CMcommdeg8}  \;=\sigma_j^n\alpha_{n+1}.\end{align*}
\endgroup
Finally, if $j=n$, then we have
$$\alpha_n\sigma_n^n
=\,
\psfrag{b}{\hspace{0.1cm}$1$}
\psfrag{A}{$C$}
\psfrag{e}{$n$}
\psfrag{n}{\hspace{-0.5cm}$n+1$}
\psfrag{g}{$\color{red}\alpha$}
\rsdraw{0.55}{0.75}{CMcommdeg0}  \;=\,
\psfrag{b}{\hspace{0.1cm}$1$}
\psfrag{A}{$C$}
\psfrag{e}{$n$}
\psfrag{n}{\hspace{-0.5cm}$n+1$}
\psfrag{g}{$\color{red}\alpha$}
\rsdraw{0.55}{0.75}{CMcommdeg1}  \; =\,
\psfrag{b}{\hspace{0.1cm}$1$}
\psfrag{A}{$C$}
\psfrag{e}{$n$}
\psfrag{n}{\hspace{-0.5cm}$n+1$}
\psfrag{g}{$\color{red}\alpha$}
\rsdraw{0.55}{0.75}{CMcommdeg2}  \; = \sigma_n^n\alpha_{n+1}.$$

\subsection{Proof of the equality \eqref{commcycCM}} \label{proofcommcyccm} Let us verify that equation \eqref{commcycCM} holds.
When $n=0$, this holds since twist morphisms are natural, $\theta_\uu=\id_\uu$, and since $\tau_0(\delta,\sigma)=\id_\uu$.
Indeed,
$$\tau_0 \alpha_0 = \theta_C \alpha_0 =\alpha_0= \alpha_0 \tau_0(\delta,\sigma).$$
Let us check it for the case $n=1$. Indeed, we have
\begingroup
\allowdisplaybreaks
\begin{align*} \alpha_1\tau_1(\delta,\sigma)&\overset{(i)}{=}\,
\psfrag{A}{\hspace{-0.15cm}$C$}
\psfrag{g}{$\color{red}\alpha$}
\psfrag{c}{$\sigma$}
\psfrag{b}{\hspace{-0.2cm}$H$}
\psfrag{l}{\hspace{-0.07cm}$\delta$}
\rsdraw{0.55}{0.75}{CMtrtau1nolab}  \; \overset{(ii)}{=} \,
\psfrag{A}{$C$}
\psfrag{g}{$\color{red}\alpha$}
\psfrag{c}{$\sigma$}
\psfrag{b}{\hspace{-0.2cm}$H$}
\psfrag{l}{\hspace{-0.1cm}$\delta$}
\rsdraw{0.55}{0.75}{CMtrtau12nolab}  \;\overset{(iii)}{=}\,
\psfrag{A}{$C$}
\psfrag{g}{$\color{red}\alpha$}
\psfrag{c}{$\sigma$}
\psfrag{b}{\hspace{-0.2cm}$H$}
\psfrag{l}{\hspace{-0.1cm}$\delta$}
\rsdraw{0.55}{0.75}{CMtrtau13nolab}  \; \overset{(iv)}{=} \,
\psfrag{A}{$C$}
\psfrag{g}{$\color{red}\alpha$}
\psfrag{c}{$\sigma$}
\psfrag{b}{\hspace{-0.2cm}$H$}
\psfrag{l}{\hspace{-0.1cm}$\delta$}
\rsdraw{0.55}{0.75}{CMtrtau14nolab}  \; \overset{(v)}{=}\\ &\overset{(v)}{=}\hspace{0.5cm}\,
\psfrag{A}{$C$}
\psfrag{g}{$\color{red}\alpha$}
\psfrag{c}{$\sigma$}
\psfrag{b}{\hspace{-0.2cm}$H$}
\psfrag{l}{\hspace{-0.1cm}$\delta$}
\rsdraw{0.55}{0.75}{CMtrtau15nolab}  \; \overset{(vi)}{=} \hspace{0.5cm}\,
\psfrag{A}{$C$}
\psfrag{g}{$\color{red}\alpha$}
\psfrag{c}{$\sigma$}
\psfrag{b}{\hspace{-0.2cm}$H$}
\psfrag{l}{\hspace{-0.1cm}$\delta$}
\rsdraw{0.55}{0.75}{CMtrtau16nolab}  \;\overset{(vii)}{=}\hspace{0.5cm}\,
\psfrag{A}{$C$}
\psfrag{g}{$\color{red}\alpha$}
\psfrag{c}{$\sigma$}
\psfrag{b}{\hspace{-0.2cm}$H$}
\psfrag{l}{\hspace{-0.1cm}$\delta$}
\rsdraw{0.55}{0.75}{CMtrtau17nolab}  \;\hspace{0.5cm}\overset{(viii)}{=} \tau_1\alpha_1.\end{align*}
\endgroup
Here~$(i)$ and~$(viii)$ follow by definition,~$(ii)$ follows by the fact that~$\alpha$ is~$\delta$-invariant,~$(iii)$ follows from the fact that the comultiplication~$\Delta_C\co C\to C\tens C$ is~$H$-linear.
The e\-qua\-li\-ty~$(iv)$ follows by the (co)associativity,~$(v)$ follows by the antipode axiom and the (co)unitality,~$(vi)$ follows from the fact that~$\alpha$ is a~$\sigma$-trace and~$(vii)$ follows by the naturality of the braiding.

Finally, let us check the equality \eqref{commcycCM} when $n\ge 2$. Indeed, we have
\begingroup
\allowdisplaybreaks
\begin{align*}\alpha_n\tau_n(\delta,\sigma)&\overset{(i)}{=}\,
\psfrag{A}{$C$}
\psfrag{g}{$\color{red}\alpha$}
\psfrag{c}{$\sigma$}
\psfrag{i}{\hspace{-0.1cm}$2$}
\psfrag{e}{$n$}
\psfrag{b}{$1$}
\psfrag{l}{\hspace{-0.1cm}$\delta$}
\rsdraw{0.55}{0.75}{CMtrtau18}  \;\overset{(ii)}{=}\,
\psfrag{A}{$C$}
\psfrag{b}{\hspace{0.1cm}$1$}
\psfrag{i}{\hspace{-0.1cm}$2$}
\psfrag{g}{$\color{red}\alpha$}
\psfrag{c}{$\sigma$}
\psfrag{b}{$1$}
\psfrag{l}{\hspace{-0.1cm}$\delta$}
\rsdraw{0.55}{0.75}{CMtrtau19}  \;\overset{(iii)}{=}\,
\psfrag{A}{$C$}
\psfrag{b}{\hspace{0.1cm}$1$}
\psfrag{i}{\hspace{-0.1cm}$2$}
\psfrag{g}{$\color{red}\alpha$}
\psfrag{c}{$\sigma$}
\psfrag{b}{$1$}
\psfrag{l}{\hspace{-0.1cm}$\delta$}
\rsdraw{0.55}{0.75}{CMtrtau110}  \;\overset{(iv)}{=} \\ &\overset{(iv)}{=}\,
\psfrag{A}{$C$}
\psfrag{b}{\hspace{0.1cm}$1$}
\psfrag{i}{\hspace{-0.1cm}$2$}
\psfrag{g}{$\color{red}\alpha$}
\psfrag{c}{$\sigma$}
\psfrag{e}{$n$}
\psfrag{b}{$1$}
\psfrag{l}{\hspace{-0.1cm}$\delta$}
\rsdraw{0.55}{0.75}{CMtrtau111}  \; \overset{(v)}{=} \,
\psfrag{A}{$C$}
\psfrag{b}{\hspace{0.1cm}$1$}
\psfrag{i}{\hspace{-0.1cm}$2$}
\psfrag{g}{$\color{red}\alpha$}
\psfrag{c}{$\sigma$}
\psfrag{b}{$1$}
\psfrag{e}{$n$}
\psfrag{l}{\hspace{-0.1cm}$\delta$}
\rsdraw{0.55}{0.75}{CMtrtau112} \;\overset{(vi)}{=}\,
\psfrag{A}{$C$}
\psfrag{b}{\hspace{0.1cm}$1$}
\psfrag{i}{\hspace{-0.1cm}$2$}
\psfrag{g}{$\color{red}\alpha$}
\psfrag{c}{$\sigma$}
\psfrag{b}{$1$}
\psfrag{e}{$n$}
\psfrag{l}{\hspace{-0.1cm}$\delta$}
\rsdraw{0.55}{0.75}{CMtrtau113} \; \overset{(vii)}{=} \\
&\overset{(vii)}{=}\,
\psfrag{A}{$C$}
\psfrag{b}{\hspace{0.1cm}$1$}
\psfrag{i}{\hspace{-0.1cm}$2$}
\psfrag{g}{$\color{red}\alpha$}
\psfrag{c}{$\sigma$}
\psfrag{b}{$1$}
\psfrag{e}{$n$}
\psfrag{l}{\hspace{-0.1cm}$\delta$}
\rsdraw{0.55}{0.75}{CMtrtau114} \; \overset{(viii)}{=}\,
\psfrag{A}{$C$}
\psfrag{b}{\hspace{0.1cm}$1$}
\psfrag{i}{\hspace{-0.1cm}$2$}
\psfrag{g}{$\color{red}\alpha$}
\psfrag{c}{$\sigma$}
\psfrag{b}{$1$}
\psfrag{e}{$n$}
\psfrag{l}{\hspace{-0.1cm}$\delta$}
\rsdraw{0.55}{0.75}{CMtrtau115} \;\overset{(ix)}{=}\,
\psfrag{A}{$C$}
\psfrag{b}{\hspace{0.1cm}$1$}
\psfrag{i}{\hspace{-0.1cm}$2$}
\psfrag{g}{$\color{red}\alpha$}
\psfrag{c}{$\sigma$}
\psfrag{b}{$1$}
\psfrag{e}{$n$}
\psfrag{l}{\hspace{-0.1cm}$\delta$}
\rsdraw{0.55}{0.75}{CMtrtau116} \;\overset{(x)}{=}\tau_n\alpha_n.
\end{align*}
\endgroup
Here~$(i)$ and~$(x)$ follow by de\-fi\-ni\-tion,~$(ii)$ follows by ap\-plying Lemma~\ref{kindaflow},~$(iii)$ follows from the fact that~$\alpha$ is~$\delta$-invariant,~$(iv)$ follows from the fact that the comultiplication~$\Delta_C\co C\to C\tens C$ is~$H$-linear.
The equality~$(v)$ follows from the coassociativity and the right module axiom,~$(vi)$ follows by the antipode axiom, the (co)unitality, and the right module axiom,~$(vii)$ follows by the coassociativity and the naturality of the braiding,~$(viii)$ follows from the fact that~$\alpha$ is a~$\sigma$-trace and finally,~$(ix)$ follows by the naturality of the braiding.

\section{Appendix} \label{appendix}
In this appendix, we ve\-ri\-fy pa\-ra\-cy\-clic com\-pa\-ti\-bi\-li\-ty re\-la\-tions~\eqref{pcr} of the pa\-ra\-co\-cy\-clic ob\-ject~$\textbf{CM}_\bullet(H,\delta,\sigma)$ defined in Section~\ref{recall}.
Note that the ve\-ri\-fi\-ca\-tion of sim\-pli\-cial re\-la\-tions~\eqref{sr} for this ob\-ject is an easy task.
One can show it gra\-phi\-cal\-ly, by using the level-exchange pro\-per\-ty (see Sec\-tion~\ref{cg}), the co\-as\-so\-ci\-a\-ti\-vi\-ty, and the co\-u\-ni\-ta\-li\-ty. Also, the re\-la\-tion~$\tau_n\sigma_i^n=\sigma_{i-1}^n\tau_{n+1}$ for~$1\le i \le n$ fol\-lows from the bi\-al\-ge\-bra axiom and by the na\-tu\-ra\-li\-ty of the brai\-ding.
In this appendix, we show that
\begin{enumerate}
  \item \label{tndi} $\tau_n \delta_i^n= \delta_{i-1}^n \tau_{n-1}$ for $n\in \N^*$ and ~$1\le i \le n$,
  \item \label{tnd0} $\tau_n\delta_0^n=\delta_n^n$ for $n\in \N^*$,
  \item \label{tns0} $\tau_n(\delta,\sigma)\sigma_0^n=\sigma_n^n(\tau_{n+1}(\delta,\sigma))^2$ for~$n\in \N$.
\end{enumerate}
Verifications of~\eqref{tndi}-\eqref{tns0} are somewhat involved. Note that~\eqref{tns0} is proved for~$n=2$ in~\cite[Theorem 7.1.]{khalkhalipourkia}. Recall the notion of twisted antipodes and the corresponding notation from Section~\ref{proofmain}.

\subsection{The relation~$\tau_n \delta_i^n= \delta_{i-1}^n \tau_{n-1}$}

If $n=1$ and $i=1$, the relation rewrites as~$\tau_1 \delta_1^1= \delta_{0}^1 \tau_{0}$ and it follows by definitions, the fact that $\sigma$ is a coalgebra morphism, the fact that $(\delta,\sigma)$ is a modular pair and by the antipode axiom:
$$\tau_1 \delta_1^1=\,
\psfrag{c}{$\sigma$}
\psfrag{l}{\hspace{-0.1cm}$\delta$}
\rsdraw{0.55}{0.75}{tau1delta10}  \; = \,
\psfrag{c}{$\sigma$}
\psfrag{l}{\hspace{-0.1cm}$\delta$}
\rsdraw{0.55}{0.75}{tau1delta11}  \; =
\,
\psfrag{c}{$\sigma$}
\psfrag{l}{\hspace{-0.1cm}$\delta$}
\rsdraw{0.55}{0.75}{tau1delta12}  \; = u = \delta_0^1\tau_0.$$
If~$n=2$ and~$i=1$, the relation rewrites as~$\tau_2\delta_1^2=\delta_0^2\tau_1$ and it follows by definitions, Lemma~\ref{algebraicppties}$a)$, the coassociativity, the antipode axiom, and the naturality of the braiding:
$$\tau_2 \delta_1^2=\,
\psfrag{b}{\hspace{0.1cm}$H$}
\psfrag{c}{$\sigma$}
\psfrag{l}{\hspace{-0.1cm}$\delta$}
\psfrag{v}[tl][tl]{$\thicksim$}
\rsdraw{0.55}{0.75}{tau2delta10nolab}  \; =
\,
\psfrag{b}{\hspace{0.1cm}$H$}
\psfrag{c}{$\sigma$}
\psfrag{v}[tl][tl]{$\thicksim$}
\psfrag{l}{\hspace{-0.1cm}$\delta$}
\rsdraw{0.55}{0.75}{tau2delta11nolab}  \; =
\,
\psfrag{b}{\hspace{0.1cm}$H$}
\psfrag{c}{$\sigma$}
\psfrag{v}[tl][tl]{$\thicksim$}
\psfrag{l}{\hspace{-0.1cm}$\delta$}
\rsdraw{0.55}{0.75}{tau2delta12nolab}  \; =
\,
\psfrag{b}{\hspace{0.1cm}$H$}
\psfrag{v}[tl][tl]{$\thicksim$}
\psfrag{c}{$\sigma$}
\psfrag{l}{\hspace{-0.1cm}$\delta$}
\rsdraw{0.55}{0.75}{tau2delta13nolab}  \;  = \delta_0^2\tau_1.$$
If $n=2$ and $i=2$, the relation rewrites as $\tau_2\delta_2^2=\delta_1^2\tau_1$ and it follows by definitions, the fact that $\sigma$ is a coalgebra morphism, and the fact that multiplication is an algebra morphism:
$$\tau_2\delta_2^2= \,
\psfrag{b}{\hspace{0.1cm}$H$}
\psfrag{v}[tl][tl]{$\thicksim$}
\psfrag{c}{$\sigma$}
\rsdraw{0.55}{0.75}{tau2delta20nolab}  \;=\,
\psfrag{b}{\hspace{0.1cm}$H$}
\psfrag{v}[tl][tl]{$\thicksim$}
\psfrag{c}{$\sigma$}
\rsdraw{0.55}{0.75}{tau2delta21nolab}  \;=\,
\psfrag{b}{\hspace{0.1cm}$H$}
\psfrag{v}[tl][tl]{$\thicksim$}
\psfrag{c}{$\sigma$}
\rsdraw{0.55}{0.75}{tau2delta22nolab}  \;=\delta_1^2\tau_1.$$
Now let $n\ge 3$.
For $i=1$, the relation rewrites as $\tau_n\delta_1^n=\delta_0^n\tau_{n-1}$ and it is true since
\begin{align*}
\tau_n\delta_1^n&\overset{(i)}{=}\,
\psfrag{j}{$1$}
\psfrag{b}{\hspace{-0.1cm}$2$}
\psfrag{e}{\hspace{-0.4cm}$n-1$}
\psfrag{v}[tl][tl]{$\thicksim$}
\psfrag{f}{\hspace{-0.6cm}$\id_{H^{\tens n-1}}$}
\psfrag{c}{$\sigma$}
\rsdraw{0.55}{0.75}{taundelta1nge3CM}  \;\overset{(ii)}{=}\,
\psfrag{j}{\hspace{-0.15cm}$H$}
\psfrag{b}{\hspace{-0.1cm}$H^{\tens n-2}$}
\psfrag{e}{\hspace{-0.4cm}$n-1$}
\psfrag{v}[tl][tl]{$\thicksim$}
\psfrag{c}{$\sigma$}
\rsdraw{0.55}{0.75}{taundelta1nge3CM1}  \;\overset{(iii)}{=}\,
\psfrag{j}{\hspace{-0.15cm}$H$}
\psfrag{b}{\hspace{-0.3cm}$H^{\tens n-2}$}
\psfrag{e}{\hspace{-0.4cm}$n-1$}
\psfrag{v}[tl][tl]{$\thicksim$}
\psfrag{c}{$\sigma$}
\psfrag{f}{\hspace{-0.55cm}$\id_{H^{\tens n-2}}$}
\rsdraw{0.55}{0.75}{taundelta1nge3CM2}  \;\overset{(iv)}{=}\,
\psfrag{j}{\hspace{-0.15cm}$H$}
\psfrag{b}{\hspace{-0.3cm}$H^{\tens n-2}$}
\psfrag{e}{\hspace{-0.4cm}$n-1$}
\psfrag{v}[tl][tl]{$\thicksim$}
\psfrag{c}{$\sigma$}
\psfrag{f}{\hspace{-0.55cm}$\id_{H^{\tens n-2}}$}
\rsdraw{0.55}{0.75}{taundelta1nge3CM3}  \;\overset{(v)}{=}\\
&\overset{(v)}{=}\,
\psfrag{j}{\hspace{-0.15cm}$H$}
\psfrag{b}{\hspace{-0.3cm}$H^{\tens n-2}$}
\psfrag{e}{\hspace{-0.4cm}$n-1$}
\psfrag{v}[tl][tl]{$\thicksim$}
\psfrag{c}{$\sigma$}
\psfrag{f}{\hspace{-0.55cm}$\id_{H^{\tens n-2}}$}
\rsdraw{0.55}{0.75}{taundelta1nge3CM4}  \;\overset{(vi)}{=}\,
\psfrag{j}{\hspace{-0.15cm}$H$}
\psfrag{b}{\hspace{-0.3cm}$H^{\tens n-2}$}
\psfrag{e}{\hspace{-0.4cm}$n-1$}
\psfrag{v}[tl][tl]{$\thicksim$}
\psfrag{c}{$\sigma$}
\psfrag{f}{\hspace{-0.55cm}$\id_{H^{\tens n-2}}$}
\rsdraw{0.55}{0.75}{taundelta1nge3CM5}  \;\overset{(vii)}{=}\,
\psfrag{j}{\hspace{-0.15cm}$H$}
\psfrag{b}{\hspace{-0.3cm}$H^{\tens n-2}$}
\psfrag{e}{\hspace{-0.4cm}$n-1$}
\psfrag{v}[tl][tl]{$\thicksim$}
\psfrag{c}{$\sigma$}
\psfrag{f}{\hspace{-0.55cm}$\id_{H^{\tens n-2}}$}
\rsdraw{0.55}{0.75}{taundelta1nge3CM6}  \;\overset{(viii)}{=}\,
\psfrag{j}{\hspace{-0.15cm}$H$}
\psfrag{b}{\hspace{-0.3cm}$H^{\tens n-2}$}
\psfrag{e}{\hspace{-0.4cm}$n-1$}
\psfrag{v}[tl][tl]{$\thicksim$}
\psfrag{c}{$\sigma$}
\psfrag{f}{\hspace{-0.55cm}$\id_{H^{\tens n-2}}$}
\rsdraw{0.55}{0.75}{taundelta1nge3CM7}  \; \overset{(ix)}{=} \delta_0^n\tau_{n-1}.\end{align*}
Here $(i)$ and $(ix)$ follow by definition, $(ii)$ follows by inductive definition of the left diagonal action, $(iii)$ and $(viii)$ follow from Lemma \ref{algebraicppties}$(a)$,
$(iv)$ and $(vi)$ follow by the naturality of the braiding and the coassociativity, $(v)$ follows by the anti-comultiplicativity of the antipode, $(vii)$ follow by the antipode axiom, the counitality, and the naturality of the braiding.

\noindent For~$2\le i \le n-1$, the relation~$\tau_n\delta_i^n= \delta_{i-1}^{n}\tau_{n-1}$ is a consequence of the fact that multiplication is an algebra morphism and the naturality of the braiding.
Let us check~$\tau_n \delta_i^n = \delta_{i-1}^n \tau_{n-1}$ for~$i=n$.
The relation follows by definition, Lemma~\ref{recurrencerelations}$b)$, by the fact that~$\sigma$ is a coalgebra morphism, and by the fact that the comultiplication is an algebra morphism:
\[\tau_n\delta_n^n=
\,
\psfrag{b}{$H$}
\psfrag{i}{\hspace{-0.5cm}$H^{\tens n-2}$}
\psfrag{v}{\hspace{0.05cm}$\delta$}
\psfrag{s}{$\sigma$}
\psfrag{f}{$\hspace{-0.55cm}\tau_{n-1}(\varepsilon,u)$}
\rsdraw{0.55}{0.75}{taundeltannge2}\;=\,
\psfrag{b}{$H$}
\psfrag{i}{\hspace{-0.5cm}$H^{\tens n-2}$}
\psfrag{v}{\hspace{0.05cm}$\delta$}
\psfrag{s}{$\sigma$}
\psfrag{f}{$\hspace{-0.55cm}\tau_{n-1}(\varepsilon,u)$}
\rsdraw{0.55}{0.75}{taundeltannge21}\;=\,
\psfrag{b}{$H$}
\psfrag{i}{\hspace{-0.5cm}$H^{\tens n-2}$}
\psfrag{v}{\hspace{0.05cm}$\delta$}
\psfrag{s}{$\sigma$}
\psfrag{f}{$\hspace{-0.55cm}\tau_{n-1}(\varepsilon,u)$}
\rsdraw{0.55}{0.75}{taundeltannge22}\;=\delta_{n-1}^n\tau_{n-1}.\]

\subsection{The relation~$\tau_n\delta_0^n=\delta_n^n$}
For $n=1$, the relation rewrites as $\tau_1\delta_0^1=\delta_1^1$ and it follows by definition, the fact that $\delta$ is an algebra morphism, by $\varepsilon S=\varepsilon$, and the fact that the unit is a coalgebra morphism:
$$\tau_1\delta_0^1=\,
\psfrag{c}{$\sigma$}
\psfrag{l}{\hspace{-0.1cm}$\delta$}
\rsdraw{0.55}{0.75}{CMtaundelta0casen1}  \;=\,
\psfrag{c}{$\sigma$}
\psfrag{l}{\hspace{-0.1cm}$\delta$}
\rsdraw{0.55}{0.75}{CMtaundelta01casen1}  \;=\sigma =\delta_1^1.$$
Now let $n\ge 2$. By definition, the fact that $\delta$ is an algebra morphism, by $\varepsilon S=\varepsilon$, the fact that the unit is a coalgebra morphism, the left module axiom, and the naturality of the braiding, we have
$$\tau_n\delta_0^n=\,
\psfrag{b}{\hspace{-0.05cm}$1$}
\psfrag{c}{$\sigma$}
\psfrag{f}{\hspace{-0.5cm}$\id_{H^{\tens n-1}}$}
\psfrag{e}{\hspace{-0.4cm}$n-1$}
\psfrag{l}{\hspace{-0.1cm}$\delta$}
\rsdraw{0.55}{0.75}{CMtaundelta0}  \;=\,
\psfrag{b}{\hspace{-0.05cm}$1$}
\psfrag{c}{$\sigma$}
\psfrag{f}{\hspace{-0.5cm}$\id_{H^{\tens n-1}}$}
\psfrag{e}{\hspace{-0.4cm}$n-1$}
\psfrag{l}{\hspace{-0.1cm}$\delta$}
\rsdraw{0.55}{0.75}{CMtaundelta03}  \;=\,
\psfrag{b}{\hspace{-0.05cm}$1$}
\psfrag{c}{$\sigma$}
\psfrag{f}{\hspace{-0.5cm}$\id_{H^{\tens n-1}}$}
\psfrag{e}{\hspace{-0.4cm}$n-1$}
\psfrag{l}{\hspace{-0.1cm}$\delta$}
\rsdraw{0.55}{0.75}{CMtaundelta01}  \;=\,
\psfrag{b}{\hspace{-0.05cm}$1$}
\psfrag{c}{$\sigma$}
\psfrag{f}{\hspace{-0.5cm}$\id_{H^{\tens n-1}}$}
\psfrag{e}{\hspace{-0.4cm}$n-1$}
\psfrag{l}{\hspace{-0.1cm}$\delta$}
\rsdraw{0.55}{0.75}{CMtaundelta02}  \;=\delta_n^n.$$

\subsection{The re\-la\-tion~$\tau_n(\delta,\sigma)\sigma_0^n=\sigma_n^n(\tau_{n+1}(\delta,\sigma))^2$} In order to show the relation, one can use Theorem~\ref{powers}.
We first prove the case~$n=0$. It is true since
$$\sigma_0^0(\tau_{1}(\delta,\sigma))^2\overset{(i)}{=} \,
\psfrag{A}{$H$}
\psfrag{l}{\hspace{-0.05cm}$\delta$}
\psfrag{v}{\hspace{0.05cm}$\sigma$}
\psfrag{u}{$H$}
\rsdraw{0.55}{0.75}{t0s0s0t12nolab}  \;\overset{(ii)}{=}\,
\psfrag{A}{$H$}
\psfrag{l}{\hspace{-0.05cm}$\delta$}
\psfrag{c}{$\sigma$}
\psfrag{u}{$H$}
\rsdraw{0.55}{0.75}{t0s0s0t122nolab}  \;\overset{(iii)}{=}\,
\psfrag{A}{$H$}
\psfrag{l}{\hspace{-0.05cm}$\delta$}
\psfrag{c}{$\sigma$}
\psfrag{u}{$H$}
\rsdraw{0.55}{0.75}{t0s0s0t123nolab}  \;\overset{(iv)}{=}\,
\psfrag{A}{$H$}
\psfrag{l}{\hspace{-0.05cm}$\delta$}
\psfrag{c}{$\sigma$}
\psfrag{u}{$H$}
\rsdraw{0.55}{0.75}{t0s0s0t124nolab}  \;\overset{(v)}{=}\,
\psfrag{A}{$H$}
\rsdraw{0.55}{0.75}{t0s0s0t125nolab}\;\overset{(vi)}{=}\tau_0(\delta,\sigma)\sigma_0^0.$$
Here~$(i)$ follows by using Formula~\eqref{n+1thpower} of Theorem~\ref{powers} and the definition of~$\sigma_0^0$,~$(ii)$ follows from definition of right adjoint action and left coadjoint action of $H$ on itself, the naturality of the braiding, and the fact that~$\delta$ is an algebra morphism and~$\sigma$ is a coalgebra morphism. The equality~$(iii)$ follows by applying twice the fact that the counit is an algebra morphism,~$(iv)$ follows by the fact that $\varepsilon S=\varepsilon$ and since $\sigma$ is a coalgebra morphism, $(v)$ follows by the fact that $\delta$ is an algebra morphism and by the antipode axiom, $(vi)$ follows by definition of $\tau_0(\delta,\sigma)$ and $\sigma_0^0$.

Let us now consider the case when $n\ge 1$. Indeed, the relation still holds since
\begingroup
\allowdisplaybreaks
\begin{align*}
\sigma_n^n(\tau_{n+1}(\delta,\sigma))^2&\overset{(i)}{=}
\,
\psfrag{b}{\hspace{-0.05cm}$1$}
\psfrag{c}{$2$}
\psfrag{n}{\hspace{-0.35cm}$n+1$}
\psfrag{f}{\hspace{0.5cm}$\tau_n(\varepsilon,u)$}
\psfrag{v}[tl][tl]{$\thicksim$}
\psfrag{k}{$H^{\tens n-1}$}
\psfrag{s}{$\sigma$}
\rsdraw{0.55}{0.75}{sntn+12}  \;\overset{(ii)}{=}\,
\psfrag{b}{\hspace{-0.05cm}$1$}
\psfrag{c}{$2$}
\psfrag{n}{\hspace{-0.35cm}$n+1$}
\psfrag{f}{\hspace{0.5cm}$\tau_n(\varepsilon,u)$}
\psfrag{v}[tl][tl]{$\thicksim$}
\psfrag{k}{$H^{\tens n-1}$}
\psfrag{s}{$\sigma$}
\rsdraw{0.55}{0.75}{sntn+121}  \;&\overset{(iii)}{=}& \\&\overset{(iii)}{=}
\,
\psfrag{b}{\hspace{-0.05cm}$1$}
\psfrag{c}{$2$}
\psfrag{n}{\hspace{-0.35cm}$n+1$}
\psfrag{f}{\hspace{0.5cm}$\tau_n(\varepsilon,u)$}
\psfrag{v}[tl][tl]{$\thicksim$}
\psfrag{k}{$H^{\tens n-1}$}
\psfrag{s}{$\sigma$}
\rsdraw{0.55}{0.75}{sntn+122}  \;\overset{(iv)}{=}\,
\psfrag{b}{\hspace{-0.05cm}$1$}
\psfrag{c}{$2$}
\psfrag{n}{\hspace{-0.35cm}$n+1$}
\psfrag{f}{\hspace{0.5cm}$\tau_n(\varepsilon,u)$}
\psfrag{v}[tl][tl]{$\thicksim$}
\psfrag{k}{$H^{\tens n-1}$}
\psfrag{s}{$\sigma$}
\psfrag{l}{\hspace{-0.05cm}$\delta$}
\rsdraw{0.55}{0.75}{sntn+123}  \;&\overset{(v)}{=}& \\&\overset{(v)}{=}\,
\psfrag{b}{\hspace{-0.05cm}$1$}
\psfrag{c}{$2$}
\psfrag{n}{\hspace{-0.35cm}$n+1$}
\psfrag{f}{\hspace{0.5cm}$\tau_n(\varepsilon,u)$}
\psfrag{v}[tl][tl]{$\thicksim$}
\psfrag{k}{$H^{\tens n-1}$}
\psfrag{s}{$\sigma$}
\psfrag{l}{\hspace{-0.05cm}$\delta$}
\rsdraw{0.55}{0.75}{sntn+124}  \; \overset{(vi)}{=} \,
\psfrag{b}{\hspace{-0.05cm}$1$}
\psfrag{c}{$2$}
\psfrag{n}{\hspace{-0.35cm}$n+1$}
\psfrag{f}{\hspace{0.5cm}$\tau_n(\delta,\sigma)$}
\psfrag{k}{$H^{\tens n-1}$}
\psfrag{h}{$H$}
\rsdraw{0.55}{0.75}{sntn+125}\; &\overset{(vii)}{=}& \tau_n(\delta, \sigma) \sigma_0^n.
\end{align*}
\endgroup
Here~$(i)$ follows by Theorem \ref{powers} applied for~$n+1$ and~$k=2$ and by definition of~$\sigma_n^n$.
The equality~$(ii)$ follows by the fact that counit is an algebra morphism and the naturality of the braiding,~$(iii)$ follows from Lemma~\ref{algebraicppties}$(b)$ and the fact that~$\sigma$ is a coalgebra morphism,~$(iv)$ follows from Lemma~\ref{algebraicppties}$(e)$, by the fact that counit is an algebra morphism, and by the naturality of the braiding.
The equality~$(v)$ follows by the fact that~$\varepsilon S = \varepsilon$, by Remark~\ref{twistedantipodeeps}, and Lemma~\ref{auxilia}$(a)$,~$(vi)$ follows by definition of the paracocyclic operator~$\tau_n(\delta,\sigma)$ and finally,~$(vii)$ follows by definition of~$\tau_n(\delta,\sigma)$ and $\sigma_0^n$.

\bibliographystyle{abbrv}
\bibliography{CMbib1}

\begin{thebibliography}{10}

\bibitem{cycliccocycles}
S.~E. Akrami and S.~Majid.
\newblock Braided cyclic cocycles and nonassociative geometry.
\newblock {\em Journal of Mathematical Physics}, 45(10):3883--3911, 2004.

\bibitem{symmcohoch}
A.~M. Bohmann, T.~Gerhardt, A.~Hogenhaven, B.~Shipley, and S.~Ziegenhagen.
\newblock {Computational tools for topological coHochschild homology}.
\newblock {\em Topology and its Applications}, 235:185--213, 2018.

\bibitem{Connesext}
A.~Connes.
\newblock {Cohomologie cyclique et foncteurs {E}xt}.
\newblock 1983.

\bibitem{connes_non-commutative_1985}
A.~Connes.
\newblock Non-commutative differential geometry.
\newblock {\em Publications Mathématiques de l'Institut des Hautes Études
  Scientifiques}, 62(1):41--144, Dec. 1985.

\bibitem{connes_hopf_1998}
A.~Connes and H.~Moscovici.
\newblock Hopf {Algebras}, {Cyclic} {Cohomology} and the {Transverse} {Index}
  {Theorem}.
\newblock {\em Communications in Mathematical Physics}, 198(1):199--246, Nov.
  1998.

\bibitem{connes_cyclic_1999}
A.~Connes and H.~Moscovici.
\newblock Cyclic {Cohomology} and {Hopf} {Algebras}.
\newblock {\em Letters in Mathematical Physics}, 48(1):97--108, Apr. 1999.

\bibitem{farinatisolotar}
M.~Farinati and A.~Solotar.
\newblock {Cyclic cohomology of coalgebras, coderivations and {D}e {R}ham
  cohomology}.
\newblock In {\em {Hopf algebras and quantum groups}}, volume 209 of {\em Lect.
  Notes in Pure and Applied Math.}, pages 105--130. Marcel Dekker, 2000.

\bibitem{khalkhalipourkia}
M.~Khalkhali and A.~Pourkia.
\newblock {Hopf cyclic cohomology in braided monoidal categories}.
\newblock {\em Homology, Homotopy and Applications}, 12(1):111 -- 155, 2010.

\bibitem{maclane1963natural}
S.~M. Lane.
\newblock Natural associativity and commutativity.
\newblock {\em Rice Institute Pamphlet-Rice University Studies}, 49(4), 1963.

\bibitem{MCLHomology}
S.~M. Lane.
\newblock {\em {Homology}}.
\newblock {Springer}, {1995}.

\bibitem{tsygan1983homology}
B.~L. Tsygan.
\newblock The homology of matrix {L}ie algebras over rings and the {H}ochschild
  homology.
\newblock {\em Russian Mathematical Surveys}, 38(2):198, 1983.

\bibitem{moncatstft}
V.~Turaev and A.~Virelizier.
\newblock {\em {Monoidal Categories and Topological Field Theory}}.
\newblock {Birkhäuser}, {2017}.

\bibitem{weibel}
C.~A. Weibel.
\newblock {\em An introduction to homological algebra}.
\newblock Number~38. Cambridge University Press, 1995.

\end{thebibliography}

\end{document}